\newtheorem{theorem}{Theorem}
\newtheorem*{theorem*}{Theorem}
\newtheorem{lemma}{Lemma}
\newtheorem*{lemma*}{Lemma}
\newtheorem{example}{Example}
\newtheorem*{example*}{Example}
\newtheorem{corollary}{Corollary}
\newtheorem*{corollary*}{Corollary}
\newtheorem{conjecture}{Conjecture}
\theoremstyle{remark}
\newtheorem*{remark*}{Remarks} 
\newcommand{\brac}[1]{\left( #1 \right)}
\newcommand{\prin}[1]{\left< #1 \right>}
\newcommand{\squar}[1]{\left[ #1 \right]}
\newcommand{\abs}[1]{\left| #1 \right|}
\newcommand{\curly}[1]{\left\{ #1 \right\}}
\newcommand{\comma}{, \, }
\newcommand{\NN}{\mathbb{N}}
\newcommand{\ZZ}{\mathbb{Z}}
\newcommand{\QQ}{\mathbb{Q}}
\newcommand{\R}{\mathbb{R}}
\newcommand{\RR}{\mathbb{R}}
\newcommand{\CC}{\mathbb{C}}
\newcommand{\Of}[1]{\mathcal{O}_{#1}}
\newcommand{\id}[1]{\mathfrak{#1}}
\newcommand{\house}[1]{{%
    \setbox0=\hbox{$#1$}
    \vrule height \dimexpr\ht0+1.4pt width .4pt depth \dp0\relax
\vrule height \dimexpr\ht0+1.4pt width \dimexpr\wd0+2pt depth \dimexpr-\ht0-1pt\relax
    \llap{$#1$\kern1pt}
    \vrule height \dimexpr\ht0+1.4pt width .4pt depth \dp0\relax}}
\newcommand{\Normf}[2]{{\text{N}_{ #1 / \QQ}}\brac{#2}}
\newcommand{\ordid}[1]{\text{ord}_{\mathfrak{p}}\brac{#1}}
\newcommand{\h}[1]{h\brac{#1}}
\newcommand{\ordp}[1]{\text{ord}_{p} \brac{#1}}
\title{On the $abc$ Conjecture in Algebraic Number Fields}
\author{Andrew Scoones}
\date{}
\begin{document}

\maketitle
\bigskip

\begin{abstract}
    In this paper we prove a weak form of the $abc$ Conjecture generalised to algebraic number fields. Given algebraic integers $a,\,b,\,c$ in a number field $K$ satisfying $a+b=c$, we give an upper bound for the logarithm of the projective height $H_{L}(a,\,b,\,c)$ in terms of norms of prime ideals dividing $abc \Of{L}$, where $L$ is the Hilbert Class Field of $K$. In many cases this allows us to give a bound in terms of the modified radical $G:=G(a,\,b,\,c)$ as given by Masser in \cite{masser2002abc}. Furthermore, by employing a recent result by Le Fourn, our estimates imply the upper bound
    \[\log H_{L}\brac{a,\,b,\,c}< G^{\frac{1}{3}+\mathcal{C}\frac{\log \log \log G}{\log \log G}},\]
    where $\mathcal{C}$ is an effectively computable constant. Further, given conditions on the largest prime ideal dividing $abc \Of{L}$, we obtain a sub-exponential bound for $H_{L}(a,\,b,\,c)$ in terms of the radical. As a consequence of our results, we will give an application to the effective Skolem-Mahler-Lech problem and give an improvement to a result by Lagarias and Soundararajan on the XYZ Conjecture given in \cite{lagarias2011smooth}. 
\end{abstract}

\section{Introduction}

Let $a,\,b,\, c:=a+b$ be positive, pairwise coprime integers and define the radical 
\begin{equation*}
G(a,\,b,\,c)=G = \prod_{\substack{p \mid abc \\ p \text{ a prime}}} p.
\end{equation*}
 In 1988, based on a conjecture of Szpiro about elliptic curves and on the Mason-Stothers Theorem in function fields \cite{mason1984diophantine}, Osterl\'e conjectured  that for all positive integers as above, there exists a positive constant $\mathcal{C}_{1}$ such that $c<G^{\mathcal{C}_{1}}$ \cite{oesterle1988nouvelles}. Further, Masser conjectured a stronger statement, that for all positive $\epsilon$ there exists a constant $\mathcal{C}_{2}\brac{\epsilon}$ such that $c < \mathcal{C}_{2}\brac{\epsilon}G^{1+\epsilon}$ \cite{masser1985open}. While both of these conjectures are referred to as the $abc$ conjecture, generally the second form by Masser is focused on in the literature. These conjectures have far-reaching implications across a range of topics; see \cite{browkin2000abc} and the references within. We also refer the reader to Chapter 14 of \cite{bombieri2007heights} and Chapter 5 of \cite{vojta2006diophantine} for a discussion of Vojta's conjectures, a generalisation of the second formulation of the $abc$ conjecture given above. 

In \cite{StewartYu}, Stewart and Yu prove that there exists an effectively computable positive constant $\mathcal{C}_{3}$ such that for all positive integers $a,\,b,\, c=a+b$ with $(a,\,b,\,c)=1$ and $c>2$, 
\[\log c < G^{\frac{2}{3}+\frac{\mathcal{C}_{3}}{\log \log G}}.\]
In \cite{stewart2001abc}, they were able to improve this result to \[\log{c}<\mathcal{C}_{4}G^{\frac{1}{3}}\brac{\log G}^{3}.\] To do this required the use of Yu's work  extending lower bounds for linear forms in logarithms to the $p$-adic setting \cite{yu1989linear}\cite{yu1990linear}. We note Yu further improved these bounds in a series of papers \cite{yu1994linear} \cite{yu1998p}\cite{yu1999p}\cite{yu2007p}; indeed, we will use results from \cite{yu2007p}, which make use of group varieties to strengthen the relevant bounds. 

Much work has also been done generalising the $abc$ conjecture to algebraic number fields. Browkin discusses this direction of research in \cite{browkin2006abc}, while in \cite{masser2002abc} Masser discusses some issues regarding adapting the radical $G$ to the case of number fields. In \cite{gyHory2008abc}, Gy\"{o}ry shows that given a number field $K$ and $a,\,b,\, c \in K^{*}$ with $a+b+c=0$ and any $\epsilon > 0$, there is an effectively computable $\mathcal{C}_{5}\brac{\epsilon}$ such that
\[\log\brac{H_{K}\brac{a,\,b,\,c}} < \mathcal{C}_{5}N_{K}\brac{a,\,b,\,c}^{1+\epsilon},\]
with \[H_{K}\brac{a,\,b,\,c} = \prod_{\upsilon \in M_{K}} \max \brac{\abs{a}_{\upsilon},\, \abs{b}_{\upsilon},\, \abs{c}_{\upsilon}},\] where $M_{K}$ is the set of normalised places of $K$ and \[N_{K}\brac{a,\,b,\,c} = \prod_{\upsilon} \text{Nm}_{\QQ}^{K}\brac{\id{p}}^{\ordid{p}},\] where $p$ is a rational prime such that $\id{p}$ lies over $p$, $\id{p}$ is the prime ideal of $\Of{K}$ corresponding to $\upsilon \in M_{K}$ and $\upsilon$ is taken over all finite places such that $\abs{a}_{\upsilon},\, \abs{b}_{\upsilon},\, \abs{c}_{\upsilon}$ are not all equal. We note that this is the same as the modified support (1.11) of \cite{masser2002abc}. Recall that the norm of a prime ideal $\id{p}$ of the ring of integers $\Of{K}$ of a number field $K$ is defined to be
\[\textrm{Nm}_{QQ}^{K}\brac{\id{p}}=p^{f_{\id{p}}},\]
where $f_{\id{p}}$ is the inertia degree of $\id{p}$ over $p$ \cite{neukirch2013algebraic}; that is
\[f_{\id{p}}:=\squar{\Of{K}/\id{p}:\ZZ/p}.\]

Initially we introduce some notation we will use throughout this article. Let $K$ be a number field of degree $d$ and let $a,\,b,\, c \in \Of{K} \backslash \curly{0}$ be such that $a+b+c=0$. Further, assume that $a\Of{K},\, b\Of{K}$ and $c\Of{K}$ are pairwise coprime; that is $a\Of{K} + b\Of{K} = \Of{K}$, and similarly for all other pairs.  Let $L=HCF(K)$ be the Hilbert Class Field of $K$ (that is, the maximal abelian unramified extension of $K$ \cite{childress2008class}) and let 
\[G=\prod_{\substack{\id{P} \text{ prime ideal} \\ \id{P} \subset \Of{L} \\ \id{P} \mid \brac{abc}\Of{L} }} \text{Nm}_{\QQ}^{L} \brac{\id{P}}.\] Let $\id{p}_{a}$ be the prime ideal of $\Of{L}$ of greatest norm dividing $a\Of{L}$, and similarly for $\id{p}_{b}$ and $\id{p}_{c}$. If $a$ is a unit, then we write that $\id{p}_{a}=1$ with norm 1, and similarly for $b$ and $c$. Write $\id{p}_{\max}$ for the prime ideal of $\Of{L}$ of greatest norm dividing $G$. A priori, this is equal to one of $\id{p}_{a},\, \id{p}_{b},\, \id{p}_{c}$.

Let 
\[h(x)=\sum_{\upsilon \in M_{F}} \log^{+} \abs{x}_{\upsilon}\]
where $M_{F}$ is the set of places of the number field $F$ normalised so they satisfy the product formula \cite{bombieri2007heights}, and $\log^{+}(\alpha)=\max (\log \alpha,\, 0)$. Also, let 
\[H_{F}\brac{x_{1},\,\dots,\,x_{n}} = \prod_{\upsilon \in M_{F}} \max \curly{\abs{x_{1}}_{\upsilon},\, \dots,\, \abs{x_{n}}_{\upsilon}}.\] 

It is worth pointing out that $H_{F}\brac{x_{1},\dots,\, x_{n}}$ is the projective height, so it gives the same value for any representative of $(x_{1},\dots,\, x_{n}) \in \mathbb{P}^{n-1}\brac{F}$. Explicitly, this means that for any $\brac{a,\,b,\,c} \in \mathbb{P}^{2}\brac{F}$ and any $k \in F^{\times}$ we have that
\begin{equation}\label{property of H}
H_{F}\brac{a,\,b,\,c} = H_{F}\brac{ka,\,kb,\,kc}.
\end{equation}
In particular, since in the set up of this article $c \neq 0$, we have that \[H_{L}\brac{a,\,b,\,c}=H_{L}\brac{\frac{a}{c},\, \frac{b}{c},\,1}.\] 

We will generally be considering the height over the Hilbert Class Field $L$. In this case, as $\squar{K:\QQ}=d$, \[h(x)= d h_{K}\log H_{L}(1,\,x),\]  where $h_{K}$ is the class number of $K$. This follows as $\squar{L:K}=h_{K}$\cite{bombieri2007heights} \cite{Waldschmidt}

We note that for any $x,\, y,\, z \in F$ where $F$ is an algebraic number field of degree $d$, 
\begin{align}\label{relation between H and h}
\log H_{F}(x,\,y,\,z) &= \log H_{F}\brac{\frac{x}{z},\,\frac{y}{z},\,1} \nonumber \\
&\leq 2d \max \brac{\h{\frac{x}{z}},\, \h{\frac{y}{z}}}.
\end{align}
This follows directly from (4.3) of \cite{gyHory2008abc}. Furthermore, we will show in Section 3 that we can write $a=u_{a}a'$ where $u_{a}$ is a unit such that
\[\mathcal{C}_{6}\log \abs{\Normf{L}{a'}}\leq \h{a'} \leq \mathcal{C}_{7}\log \abs{\Normf{L}{a'}},\]
where $\mathcal{C}_{6},\, \mathcal{C}_{7}$ are computable constants, and similarly for $b$ and $c$. We assume without loss of generality that 
\begin{equation} \label{double inequality}
    h(a')\leq h(b')\leq h(c').
\end{equation}

We initially prove the following main theorem.

\begin{theorem}\label{main theorem}
Given the set up above, relabeling $a,\,b$ and $c$ if necessary to satisfy \eqref{double inequality}, there exists an effectively computable constant $\mathcal{C}_{8}$ depending only on the field $K$ such that
\begin{align}
\log H_{L}(a,\,b,\,c) &<  \brac{\text{Nm}_{\QQ}^{L}\brac{\id{p}_{a }}\text{Nm}_{\QQ}^{L}\brac{\id{p}_{b}} \text{Nm}_{\QQ}^{L}\brac{\id{p}_{c}}^{2}\max \curly{\text{Nm}_{\QQ}^{L}\brac{\id{p}_{b}},\,\text{Nm}_{\QQ}^{L}\brac{\id{p}_{c}} }}^{\frac{1}{3}} G^{\mathcal{C}_{8}\frac{\log \log \log G}{\log \log G}} .
\end{align} 
\end{theorem}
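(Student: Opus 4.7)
The plan is to adapt the Stewart--Yu method \cite{stewart2001abc} to the number field setting, the Hilbert Class Field $L$ playing the critical role that every ideal of $\Of{K}$ becomes principal in $\Of{L}$. This lets us factor $a$, $b$, $c$ as a unit times a product of specified generators of the relevant prime ideals, turning $a+b+c=0$ into an $S$-unit equation to which Yu's $p$-adic linear forms in logarithms estimate \cite{yu2007p} can be applied.

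Concretely, for each prime $\id{P}$ of $\Of{L}$ dividing $abc\Of{L}$ I would select a generator $\pi_{\id{P}}$ whose height satisfies $\h{\pi_{\id{P}}} \ll \log \text{Nm}_{\QQ}^{L}\brac{\id{P}}$; this is the content of Section 3, as foreshadowed by the inequality on $h\brac{a'}$ already announced. Writing $a = u_a a'$ with $a' = \prod_{\id{P} \mid a\Of{L}} \pi_{\id{P}}^{\text{ord}_{\id{P}}(a)}$, and analogously for $b$ and $c$, the relation $a+b+c=0$ rearranges as $-b/a = 1 + c/a$. Pairwise coprimality of $a, b, c$ gives $\text{ord}_{\id{p}_b}\brac{1 + c/a} = \text{ord}_{\id{p}_b}(b) =: n_b$, and after writing $c/a$ as a product of the generators $\pi_{\id{P}}$ and a unit, Yu's theorem yields an upper bound of shape
\[ n_b \log \text{Nm}_{\QQ}^{L}\brac{\id{p}_b} \; \ll \; \text{Nm}_{\QQ}^{L}\brac{\id{p}_b} \prod_{\id{P} \mid ac\Of{L}} \h{\pi_{\id{P}}} \cdot \log B, \]
where $B$ is an a priori bound on the exponents appearing in $c/a$. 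Symmetric estimates hold at $\id{p}_a$ and $\id{p}_c$.

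Converting each such ordinal bound into a bound for the contribution of the relevant prime to $\h{c'}$ and invoking \eqref{relation between H and h} yields three inequalities for $\log H_{L}\brac{a, b, c}$, one per distinguished prime. These can be combined through an AM--GM-style multiply-and-take-roots step to extract both the exponent $\tfrac{1}{3}$ and the precise factor $\text{Nm}_{\QQ}^{L}\brac{\id{p}_a}\text{Nm}_{\QQ}^{L}\brac{\id{p}_b}\text{Nm}_{\QQ}^{L}\brac{\id{p}_c}^{2}\max\curly{\text{Nm}_{\QQ}^{L}\brac{\id{p}_b}, \text{Nm}_{\QQ}^{L}\brac{\id{p}_c}}$; the asymmetric $\max$ reflects the ordering \eqref{double inequality}, which forces $\h{c'}$ to dominate. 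The remaining multiplicative error $G^{\mathcal{C}_8 \log\log\log G/\log\log G}$ absorbs both the product $\prod_\id{P} \h{\pi_{\id{P}}}$ (using the standard bound that the number of prime divisors of $G$ is $\ll \log G/\log\log G$) and the $\log B$ factor from Yu's bound.

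The main obstacle will be the bookkeeping through Yu's estimate \cite{yu2007p}: $B$ depends on $H_L(a,b,c)$ itself, so the inequality obtained is implicit and the clean explicit bound must be extracted by a standard monotonicity argument. A secondary subtlety is that the final combination has to succeed both when $\text{Nm}_{\QQ}^{L}\brac{\id{p}_b} \geq \text{Nm}_{\QQ}^{L}\brac{\id{p}_c}$ and when not, which I would verify by splitting into two sub-cases.
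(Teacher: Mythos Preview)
Your overall architecture is right---pass to the Hilbert Class Field, choose height-controlled generators $\pi_{\id{P}}$, apply Yu's $p$-adic bound to $\text{ord}_{\id{p}_\alpha}(\alpha)$ for $\alpha \in \{a,b,c\}$, multiply and take cube roots---but one essential ingredient is missing, and without it the argument does not close.

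The gap is this: Yu's theorem bounds a single valuation $\text{ord}_{\id{p}_b}(b)$, but you never explain how to pass from these ordinal bounds back to $\log H_L(a,b,c)$. Your sentence ``converting each such ordinal bound into a bound for the contribution of the relevant prime to $h(c')$'' is where the argument breaks: $\text{ord}_{\id{p}_b}(b)$ contributes to $h(b')$, not $h(c')$, and in any case a single valuation does not control the whole height. The paper fills this gap with a separate tool you have not invoked: Gy\H{o}ry's effective bounds for $S$-unit equations (Lemma~\ref{sunitbound}), applied \emph{three times} with different choices of $S$. Taking $S=S_\infty$ gives $\log H_L(a,b,c) \ll h(c')$; enlarging $S$ to include the primes dividing $c\Of{L}$ gives $\log H_L(a,b,c) \ll \text{Nm}_{\QQ}^{L}(\id{p}_c)\cdot(\text{log-norm product})\cdot h(b')$; enlarging once more to the primes dividing $bc\Of{L}$ gives $\log H_L(a,b,c) \ll \max\{\text{Nm}_{\QQ}^{L}(\id{p}_b),\text{Nm}_{\QQ}^{L}(\id{p}_c)\}\cdot(\text{log-norm product})\cdot h(a')$. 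It is precisely this cascading $h(c')\to h(b')\to h(a')$, each step costing a norm factor from the $S$-unit bound, combined with $h(\alpha')\leq (\max_{\id{p}}\text{ord}_{\id{p}}(\alpha))\log G$ and then Yu, that produces the specific factor $\text{Nm}_{\QQ}^{L}(\id{p}_a)\text{Nm}_{\QQ}^{L}(\id{p}_b)\text{Nm}_{\QQ}^{L}(\id{p}_c)^2\max\{\text{Nm}_{\QQ}^{L}(\id{p}_b),\text{Nm}_{\QQ}^{L}(\id{p}_c)\}$. A purely symmetric triple of Yu bounds would only give $\text{Nm}_{\QQ}^{L}(\id{p}_a)\text{Nm}_{\QQ}^{L}(\id{p}_b)\text{Nm}_{\QQ}^{L}(\id{p}_c)$.

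A related omission: to apply Yu you must bound $B$, the largest exponent appearing in $c/a$ written out in generators and fundamental units. The prime exponents are trivially $\leq \log H_L(a,b,c)$, but the fundamental-unit exponents in $u_c/u_a$ require a separate argument (the paper's Lemma~\ref{upper bound delta}), which in turn rests on the first $S$-unit bound with $S=S_\infty$ to control $h(u_a/u_c)$. You flag the implicit nature of the final inequality, which is correct, but the unit-exponent control is not automatic and should be mentioned.
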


We will then give various corollaries to put the product of norms of prime ideals in terms of the radical $G$, namely corollaries 1-7. Importantly, in Corollary 7 we will give conditions that allow us to attain a sub-exponential bound.

In later parts we will give related results that can be easier to manipulate due to fewer prime ideals on the right hand side of the inequality, attaining the following theorem.

\begin{theorem}\label{thm 1}
Given the set up above, there exists an effectively computable number $\mathcal{C}_{9}$ depending only on the field $K$ such that
\begin{align}
\log H_{L}(a,\,b,\,c) &<  \brac{\text{Nm}_{\QQ}^{L}\brac{\id{p}_{b}} \text{Nm}_{\QQ}^{L}\brac{\id{p}_{c}}^{2}}^{\frac{1}{2}} G^{\mathcal{C}_{9}\frac{\log \log \log G}{\log \log G}} \nonumber \\
 &= \text{Nm}_{\QQ}^{L}\brac{\id{p}_{b}}^{\frac{1}{2}} \text{Nm}_{\QQ}^{L}\brac{\id{p}_{c}} G^{\mathcal{C}_{9}\frac{\log \log \log G}{\log \log G}}.
\end{align} 
\end{theorem}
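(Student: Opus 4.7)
The plan is to adapt the proof of Theorem~\ref{main theorem} so that it works with only two active variables rather than three, exploiting $a = -(b+c)$ so that $\id{p}_{a}$ need not appear as an independent parameter. By the projective invariance \eqref{property of H},
\[
\log H_{L}\brac{a,\,b,\,c} = \log H_{L}\brac{1,\, b/c,\, 1 + b/c},
\]
so the entire height is controlled by the single ratio $b/c$, up to constants, via \eqref{relation between H and h}.

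Next I would apply Yu's $p$-adic lower bound for linear forms in logarithms (as in the proof of Theorem~\ref{main theorem}) to bound $\text{ord}_{\id{P}}\brac{1 + b/c} = \text{ord}_{\id{P}}\brac{-a/c}$ at each prime ideal $\id{P}$ of $\Of{L}$ dividing $bc\Of{L}$. Summing these bounds weighted by $\log \text{Nm}_{\QQ}^{L}\brac{\id{P}}$, and combining with the archimedean estimate \eqref{relation between H and h} together with the inequality $h(x) \leq \mathcal{C}_{7}\log\abs{\Normf{L}{x}}$ recalled in the setup, yields an upper bound whose leading factors are the largest primes $\id{p}_{b}$ and $\id{p}_{c}$, with the remaining primes absorbed into $G^{\mathcal{C}_{9}\frac{\log\log\log G}{\log\log G}}$ by the Le Fourn-type bound on the largest prime dividing $G$ used already for Theorem~\ref{main theorem}.

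For the balancing step, Theorem~\ref{main theorem} uses a power-mean (AM-GM/H\"older-type) optimization over three quantities associated to $a',\, b',\, c'$, producing the cube root and the asymmetric factor $\text{Nm}_{\QQ}^{L}\brac{\id{p}_{c}}^{2}\max\curly{\text{Nm}_{\QQ}^{L}\brac{\id{p}_{b}},\,\text{Nm}_{\QQ}^{L}\brac{\id{p}_{c}}}$. Here the same optimization is applied to only two quantities, associated to $b'$ and $c'$, producing a square root and the factor $\text{Nm}_{\QQ}^{L}\brac{\id{p}_{b}}\text{Nm}_{\QQ}^{L}\brac{\id{p}_{c}}^{2}$. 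The asymmetric exponents $(1/2,\,1)$ reflect the ordering $h(b') \leq h(c')$ from \eqref{double inequality}, analogously to how $\text{Nm}_{\QQ}^{L}\brac{\id{p}_{c}}^{2}$ appears in Theorem~\ref{main theorem}.

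The main obstacle will be verifying that primes dividing only $a\Of{L}$ can be dropped cleanly from the right-hand side. By pairwise coprimality, any such prime $\id{P}$ satisfies $\text{ord}_{\id{P}}(a) > 0$ but $\text{ord}_{\id{P}}(b) = \text{ord}_{\id{P}}(c) = 0$, so in the representation $H_{L}\brac{1,\,b/c,\,1+b/c}$ it contributes only through the archimedean places at which $\abs{a}_{\upsilon}$ might be large; these contributions are controlled by $h(a') \leq h(c')$ and thus absorbed into the subexponential error. All the building blocks (Yu's $p$-adic bound, the Le Fourn estimate, the height-norm comparison from Section~3) are already invoked in the proof of Theorem~\ref{main theorem}, so the extension is primarily bookkeeping: rerunning the balancing step with two factors rather than three and confirming that $\mathcal{C}_{9}$ depends only on $K$.
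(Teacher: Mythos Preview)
Your high-level picture --- drop one of the three estimates used in Theorem~\ref{main theorem}, keep the other two, multiply and take a square root instead of a cube root --- is exactly what the paper does in Section~4. However, several of the concrete steps you describe do not match the actual mechanism, and as written the argument would not go through.

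First, the quantity $\text{ord}_{\id{P}}(1+b/c)=\text{ord}_{\id{P}}(-a/c)$ is not what Yu's bound (Lemma~\ref{Yu Varieties Bound}) is applied to. For $\id{P}\mid c\Of{L}$ this valuation is $-\text{ord}_{\id{P}}(c)<0$, and for $\id{P}\mid b\Of{L}$ it is zero; neither gives a useful linear-forms-in-logs input. The paper instead bounds $\text{ord}_{\id{P}}(c)=\text{ord}_{\id{P}}(-a/b-1)$ for $\id{P}\mid c\Of{L}$ and $\text{ord}_{\id{P}}(b)=\text{ord}_{\id{P}}(-c/a-1)$ for $\id{P}\mid b\Of{L}$, exactly as in \eqref{ordid c 1}, \eqref{ordid b 1}. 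These are the positive valuations to which Yu applies.

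Second, there is no ``summing weighted by $\log\text{Nm}_{\QQ}^{L}(\id{P})$''. The route from $\log H_{L}(a,b,c)$ down to $\max_{\id{P}}\text{ord}_{\id{P}}$ goes through the $S$-unit bound (Lemma~\ref{sunitbound}), which you do not mention at all but which is the essential bridge: it yields \eqref{D} and \eqref{E}, and then Lemma~\ref{relation between ord and h} converts $h(c')$ and $h(b')$ into $\max\text{ord}_{\id{P}}\cdot\log G$. Combining with Yu gives the two inequalities \eqref{H< first bound} and \eqref{H< second bound}; the paper simply multiplies these and takes the square root (equation \eqref{multiplied together}). There is no H\"older/AM-GM ``optimization'' over $b',c'$ --- the asymmetry $\text{Nm}_{\QQ}^{L}(\id{p}_{c})^{2}$ arises because $\text{Nm}_{\QQ}^{L}(\id{p}_{c})$ appears once in each of the two inequalities being multiplied.

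Third, Le~Fourn's result is not used here or in Theorem~\ref{main theorem}; it enters only in Theorem~3. The absorption of the products $\prod\log\text{Nm}(\id{p}_{i})$ into $G^{\mathcal{C}\frac{\log\log\log G}{\log\log G}}$ comes from Lemma~\ref{primeidealthm} and the AM-GM argument \eqref{Big relation with product of logs}--\eqref{Final relation between G and product of norms}. Finally, the primes dividing $a\Of{L}$ are not ``dropped'': they still appear in both \eqref{H< first bound} and \eqref{H< second bound} through the product $\prod_{i}\log\text{Nm}_{\QQ}^{K}(\id{p}_{i})$, and it is precisely this product (not $\text{Nm}_{\QQ}^{L}(\id{p}_{a})$ itself) that gets absorbed into the sub-exponential error.
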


We will then deduce corollaries 8-13, again giving conditions in Corollary 10 that give a sub-exponential bound in terms of the radical $G$.

We will then discuss how exploiting a method of Le Fourn \cite{le2020tubular} enables us to reduce the dependency on prime ideals to give the following result with no further conditions:

\begin{theorem}
Given the set up above, there exists an effectively computable constant $\mathcal{C}_{10}$ depending on $K$ such that
\begin{align}
\log H_{L}(a,\,b,\,c) &<  \brac{\text{Nm}_{\QQ}^{L}\brac{\id{p}_{a }}\text{Nm}_{\QQ}^{L}\brac{\id{p}_{b}} \text{Nm}_{\QQ}^{L}\brac{\id{p}_{c}} \text{Nm}_{\QQ}^{L}\brac{\id{p}_{c}'} \text{Nm}_{\QQ}^{L}\brac{\id{q}}}^{\frac{1}{3}} G^{\mathcal{C}_{10}\frac{\log \log \log G}{\log \log G}},
\end{align} 
where $\id{p}_{c}'$ is the prime ideal of third largest norm dividing $c\Of{L}$ and $\id{q}$ is the prime ideal of $\Of{L}$ of third largest norm dividing $bc\Of{L}$.
\end{theorem}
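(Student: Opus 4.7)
The plan is to follow the proof of Theorem~\ref{main theorem} line by line, but at the two places where the largest prime norms entered via Yu's $p$-adic linear forms in logarithms, substitute Le Fourn's tubular refinement from \cite{le2020tubular}. Recall that in the previous argument the factor $\text{Nm}_{\QQ}^{L}\brac{\id{p}_c}^{2}\max\curly{\text{Nm}_{\QQ}^{L}\brac{\id{p}_b},\,\text{Nm}_{\QQ}^{L}\brac{\id{p}_c}}$ ultimately traces back to two applications of Yu's estimate: one to control the $\id{p}_c$-adic valuation of $a/b$ via a $\id{p}_c$-adic linear form in logarithms, which contributes an extra $\text{Nm}_{\QQ}^{L}\brac{\id{p}_c}$, and one to bound the largest $\id{p}$-adic valuation appearing in $bc\,\Of{L}$, which contributes the $\max$-term.

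Le Fourn's tubular Baker method, applied to a $\id{p}$-adic linear form in logarithms whose support contains at least three primes, permits one to replace the contribution of the dominant prime by the norm of the third-largest prime in the support, at the cost of constants depending only on invariants of $K$. Applying this refinement to the form controlling the $c\Of{L}$-part replaces the second copy of $\text{Nm}_{\QQ}^{L}\brac{\id{p}_c}$ by $\text{Nm}_{\QQ}^{L}\brac{\id{p}_c'}$; applying it to the form controlling the joint $bc\,\Of{L}$-support replaces $\max\curly{\text{Nm}_{\QQ}^{L}\brac{\id{p}_b},\,\text{Nm}_{\QQ}^{L}\brac{\id{p}_c}}$ by $\text{Nm}_{\QQ}^{L}\brac{\id{q}}$. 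The three factors $\text{Nm}_{\QQ}^{L}\brac{\id{p}_a}$, $\text{Nm}_{\QQ}^{L}\brac{\id{p}_b}$, $\text{Nm}_{\QQ}^{L}\brac{\id{p}_c}$ pass through unchanged from the proof of Theorem~\ref{main theorem}, and reassembling the resulting inequality via \eqref{property of H} and \eqref{relation between H and h} yields the cube root of the product of the five stated norms.

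The main technical obstacle will be verifying that Le Fourn's non-degeneracy hypotheses hold for the specific linear forms arising from the unit equation $a/c+b/c+1=0$ over the Hilbert Class Field $L$, and that the effective constants produced depend only on $K$ (through $[L:\QQ]$, the class number, the regulator, etc.) rather than on $a,\,b,\,c$. Since each replacement prime ($\id{p}_c'$ or $\id{q}$) still divides $abc\,\Of{L}$ and hence $G$, and since Le Fourn's estimate preserves the same shape of dependence on the heights of the unknowns as Yu's theorem in \cite{yu2007p}, the correction factor $G^{\mathcal{C}_{10}\frac{\log \log \log G}{\log \log G}}$ is unaffected — only the numerical value of $\mathcal{C}_{10}$ differs from $\mathcal{C}_{8}$. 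A secondary check will be needed to ensure that the tubular replacement, which requires at least three distinct primes in the support, does not fail when the support of $c\Of{L}$ or of $bc\Of{L}$ is small; in those degenerate cases the original bound of Theorem~\ref{main theorem} already gives a stronger estimate, so one can split into cases and take whichever bound is better.
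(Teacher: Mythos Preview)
Your overall plan --- follow the proof of Theorem~\ref{main theorem} and use Le Fourn's result to trade a largest-prime contribution for a third-largest one --- is the right one, and you have correctly identified which two factors get replaced. But you have misidentified \emph{where} those factors come from in the proof, and consequently where Le Fourn's input is applied.

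In the proof of Theorem~\ref{main theorem}, the five prime-norm factors arise from two different sources. The three factors $\text{Nm}_{\QQ}^{L}(\id{p}_a),\,\text{Nm}_{\QQ}^{L}(\id{p}_b),\,\text{Nm}_{\QQ}^{L}(\id{p}_c)$ come from the three applications of Yu's $p$-adic bound (Lemma~\ref{Yu Varieties Bound}) to $\ordid{a},\,\ordid{b},\,\ordid{c}$ in \eqref{ordid(a) leq bound}--\eqref{ordid(c) leq bound}. The \emph{extra} copy of $\text{Nm}_{\QQ}^{L}(\id{p}_c)$ and the factor $\max\curly{\text{Nm}_{\QQ}^{L}(\id{p}_b),\,\text{Nm}_{\QQ}^{L}(\id{p}_c)}$ come not from Yu but from the $S$-unit bound (Lemma~\ref{sunitbound}), specifically from the $P/\log^{*}P$ term in \eqref{second sunit bound} and \eqref{third sunit bound}, which feed into \eqref{E} and \eqref{F}. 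Yu's lemma bounds $\ordid{\Theta}$ for a \emph{single fixed} prime $\id{p}$; there is no set of primes in that statement from which one could extract a ``third largest'', so a tubular replacement at that step does not even make sense.

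Le Fourn's theorem (Lemma~\ref{lefourn} in the paper, Theorem~1.4 of \cite{le2020tubular}) is precisely a refinement of the $S$-unit bound: for a set $S$ of places it replaces the maximal finite norm $P_S$ by the third-largest norm $P'_S$. The paper therefore leaves all three applications of Yu's Lemma~\ref{Yu Varieties Bound} untouched and instead substitutes Lemma~\ref{lefourn} for Lemma~\ref{sunitbound} in the derivation of \eqref{E} and \eqref{F}. With $S=S_\infty\cup\{\id{p}:\id{p}\mid c\Of{L}\}$ this turns the extra $\text{Nm}_{\QQ}^{L}(\id{p}_c)$ into $\text{Nm}_{\QQ}^{L}(\id{p}_c')$; with $S=S_\infty\cup\{\id{p}:\id{p}\mid bc\Of{L}\}$ it turns $\max\curly{\text{Nm}_{\QQ}^{L}(\id{p}_b),\,\text{Nm}_{\QQ}^{L}(\id{p}_c)}$ into $\text{Nm}_{\QQ}^{L}(\id{q})$. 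The rest of the argument --- multiplying the three inequalities, absorbing the log-products into $G^{\mathcal{C}\frac{\log\log\log G}{\log\log G}}$, and taking cube roots --- is unchanged. Your remark about the degenerate case (fewer than three primes in the support) is handled automatically by the convention $P'_S=1$ in Lemma~\ref{lefourn}, so no case split is needed.
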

 
 From here we will deduce that
 \begin{align}
\log H_{L}(a,\,b,\,c) &<  G^{\frac{1}{3}+ \mathcal{C}_{11}\frac{\log \log \log G}{\log \log G}}.
\end{align} 

 The results given in this paper, in particular Theorem 3, allow us to give a new method of solving the effective Skolem-Mahler-Lech problem \cite{ouaknine2012decision} of order 3. Additionally, we use Corollary 10 to expand on results by Lagarias and Soundararajan regarding smooth solutions to the $abc$ equation \cite{lagarias2011smooth}. We briefly discuss both these problems here.

First we discuss the effective Skolem-Mahler-Lech problem. The problem is, given a linear recurrence sequence, to decide whether said sequence contains zeroes.

We recall that a linear recurrence sequence is a sequence $\brac{a_{x}}$ of elements of a commutative ring with 1, $R$, satisfying a homogeneous linear recurrence relation 
\[a_{x+n}=c_{1}a_{x+n-1}+\cdots + c_{n} a_{x},\]
where $c_{1},\dots,\, c_{n} \in R$ \cite{everest2003recurrence}. We note, we will take $R$ to be an algebraic number field.

The Skolem-Mahler-Lech Theorem is as follows.

\begin{theorem}[Skolem-Mahler-Lech]
If a sequence of numbers satisfies a linear recurrence relation over a field of characteristic zero, the the zeroes of this sequence can be decomposed into the union of a finite set, and finitely many arithmetic progressions.
\end{theorem}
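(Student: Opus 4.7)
The plan is to follow the classical $p$-adic strategy of Skolem, Mahler and Lech. First I would reduce to the case where the ambient ring is the ring of integers of a number field: the coefficients $c_1,\ldots,c_n$ and initial terms $a_0,\ldots,a_{n-1}$ generate a finitely generated subfield of the characteristic-zero field, which by Lech's specialisation argument can be embedded into a number field without creating spurious zeros. In the number-field setting, the standard closed form for a linear recurrence gives
\[ a_x = \sum_{i=1}^{r} P_i(x)\, \alpha_i^x, \]
where $\alpha_1,\ldots,\alpha_r$ are the distinct characteristic roots lying in some finite extension $L/K$ and $P_i \in L[t]$.

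Next I would localise. Choose a prime ideal $\mathfrak{p}$ of $\Of{L}$ above a rational prime $p$ at which every $\alpha_i$ is a unit; all but finitely many primes have this property since only finitely many primes divide the numerators or denominators of the $\alpha_i$. Letting $q$ be the order of the residue field $\Of{L}/\mathfrak{p}$, the multiplicative group $\brac{\Of{L}/\mathfrak{p}}^{\times}$ has order $q-1$, so setting $M = q-1$ (replaced by a suitable multiple of a power of $p$ if needed) ensures $\alpha_i^M \equiv 1 \pmod{\mathfrak{p}^N}$ for every $i$, with $N$ taken large. Writing $\alpha_i^M = 1 + \pi^N \gamma_i$ with $\pi$ a uniformiser and $\gamma_i$ a $\mathfrak{p}$-adic integer, the binomial series $(1+\pi^N \gamma_i)^m$ converges for all $m \in \mathbb{Z}_p$.

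Then I would partition the indices $x$ into the $M$ residue classes modulo $M$ and handle each separately. On the class $x = s + Mm$ with $0 \leq s < M$, we have
\[ a_{s+Mm} = \sum_{i=1}^{r} P_i(s+Mm)\, \alpha_i^s\, (1+\pi^N \gamma_i)^m, \]
which extends to a $\mathfrak{p}$-adic analytic function $f_s \colon \mathbb{Z}_p \to L_{\mathfrak{p}}$ agreeing with $m \mapsto a_{s+Mm}$ on $\mathbb{N}$. Strassman's theorem, applied to each $f_s$, yields the dichotomy: either $f_s \equiv 0$, in which case the entire arithmetic progression $\{s + Mm : m \in \mathbb{N}\}$ consists of zeros of $(a_x)$, or $f_s$ has only finitely many zeros in $\mathbb{Z}_p$, contributing only finitely many zeros to the sequence. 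Taking the union over $s \in \{0,1,\ldots,M-1\}$ yields the asserted decomposition of the zero set into finitely many arithmetic progressions together with a finite set.

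The main obstacle is ensuring genuine $\mathfrak{p}$-adic convergence of the power series $(1+\pi^N \gamma_i)^m$ on all of $\mathbb{Z}_p$: this requires $N$ to exceed a threshold determined by the ramification index $e(\mathfrak{p}/p)$ and the factor $1/(p-1)$ in the convergence radius of the $p$-adic exponential. This is handled by inflating $M$ by an extra power of $p$ (equivalently by working in a principal unit subgroup of sufficiently high level), which is a standard but delicate bookkeeping step. The reduction in the first paragraph, from an arbitrary characteristic-zero field to a number field, is also technical but is by now a well-rehearsed specialisation argument.
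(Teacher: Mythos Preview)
Your outline is the classical $p$-adic argument of Skolem, Mahler and Lech and is essentially correct as sketched; in particular the reduction to a number field, the choice of a prime at which all characteristic roots are units, the splitting into residue classes modulo $M$, and the appeal to Strassman's theorem are the standard steps, and your caveat about enlarging $M$ by a power of $p$ to guarantee convergence is exactly the point that needs care.

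However, there is nothing to compare against: the paper does not prove the Skolem--Mahler--Lech theorem. It merely states the result and refers the reader to \cite{everest2003recurrence} for a proof over $\QQ$ and for further references. The theorem is quoted only as background for the discussion of the \emph{effective} Skolem problem in Section~8, where the paper's own contribution is to use its $abc$-type inequality (Theorem~3) to bound the index $n$ of a possible zero in an order-three recurrence with distinct roots. So your proposal is a correct proof of a theorem the paper deliberately leaves unproved.
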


More comments on this theorem and a proof for sequences defined over the rationals is given in \cite{everest2003recurrence}, along with further references.

\begin{remark*}
We note that there exists an algorithm to tell us if there are infinitely many zeroes, and if so to find the decomposition of these zeros into periodic sets guaranteed to exist by the Skolem–Mahler–Lech Theorem \cite{berstel1976deux}. The effective Skolem-Mahler-Lech problem then is to find an algorithm to determine whether there are any non-periodic zeroes in a given linear recurrence sequence, importantly in the case where these are the only zeroes \cite{ouaknine2012decision}. This would allow us to effectively answer whether a given linear recurrence relation contains any zeroes.
\end{remark*}

Recall, given a linear recurrence relation \[a_{x+n}=c_{1}a_{x+n-1}+\cdots + c_{n} a_{x},\]
and initial terms $a_{1},\dots,\, a_{n} \in R$, we can find a formula for the $m$'th term. Given the recurrence relation, we find the characteristic polynomial
\[f(X)=X^{n}-c_{1}X^{n-1}-\cdots -c_{n-1}X-c_{n},\]
with roots $r_{1},\dots,\, r_{l}$ with multiplicities $m_{1},\dots,\, m_{l}$ respectively. The $x$'th term of the sequence then is given by
\[a_{x}=g_{1}(x)r_{1}^{x}+\cdots +g_{l}(x)r_{l}^{x},\]
where $g_{i}(x)$ are polynomials with $\deg \brac{g_{i}}\leq m_{i}-1$ which depend on the initial values $a_{1},\, \dots,\, a_{n}$.

In section 8 we will show how Theorem 3 gives a new method of determining if a linear recurrence sequence of order 3 contains zeroes. For further references of preexisting methods and results on this problem and related problems we refer the reader to \cite{halava2005skolem} \cite{sha2019effective} \cite{ostafe2020skolem} and the references contained within them.

We now discuss the smooth $abc$ Conjecture, also referred to as the $xyz$ conjecture given by Lagarias and Soundararajan in \cite{lagarias2011smooth}. Given a triple $a,\,b,\,c:=a+b \in \NN$, define the smoothness of the triple
\[S(a,\,b,\,c):=\max \curly{p\,:\, p\mid abc}.\]
In \cite{lagarias2011smooth}, Lagarias and Soundararajan give the following conjecture, which they refer to as the $xyz$ conjecture.

\begin{conjecture}[$xyz$ conjecture]
There exists a positive constant $\kappa$ such that the following hold.

a) For each $\epsilon>0$ there are only finitely many integer solutions $\brac{X,\,Y,\,Z}$ to the equation $X+Y=Z$ with
$\brac{X,\,Y,\,Z}=1$ and
\[S\brac{X,\,Y,\,Z}<\brac{\log H\brac{X,\,Y,\,Z}}^{\kappa-\epsilon}.\]

b) For each $\epsilon>0$ there are infinitely many integer solutions $\brac{X,\,Y,\,Z}$ to the equation $X+Y=Z$ with $\brac{X,\,Y,\,Z}=1$ and
\[S\brac{X,\,Y,\,Z}<\brac{\log H\brac{X,\,Y,\,Z}}^{\kappa+\epsilon}.\]
\end{conjecture}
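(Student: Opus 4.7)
The plan is to treat the two halves of the conjecture by different techniques, noting up front that part (a) in its full strength is currently open; my sketch therefore aims at establishing part (a) for the largest admissible $\kappa$ one can currently extract from the $abc$-type machinery of this paper, and at (b) via constructive families.

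For part (a), I would translate the smoothness hypothesis into a bound on the radical. Writing $S = S(X,Y,Z)$, every prime dividing $XYZ$ is at most $S$, so $G := \prod_{p \mid XYZ} p$ satisfies $G \leq \prod_{p \leq S} p = e^{S(1+o(1))}$ by the prime number theorem, with sharper control when the number of distinct prime factors is small. Under the hypothesis $S < (\log H)^{\kappa - \epsilon}$ this gives $\log G \leq (\log H)^{\kappa - \epsilon}(1+o(1))$. Feeding this into Corollary 10 of this paper, applied over $K = \QQ$, which under its condition on the largest prime yields a sub-exponential bound $\log H < G^{o(1)}$, one obtains $\log H < \exp(o((\log H)^{\kappa - \epsilon}))$, a contradiction for large $H$ provided $\kappa$ lies in an explicit range determined by the sub-exponential factor. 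Without the Corollary 10 hypothesis one falls back on the weaker bound $\log H < G^{1/3 + o(1)}$ of Theorem 2 (or already Stewart--Yu), which still yields finiteness for a narrower range of $\kappa$; this is the route that matches, and improves upon, the Lagarias--Soundararajan argument.

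For part (b) the natural strategy is constructive. One parametrises infinite families of coprime triples $X+Y = Z$ whose prime factors are controlled. Promising sources are Pell-type equations $x^{2} - D y^{2} = \pm 1$ with $D$ built from a fixed finite set of primes, so that $S \ll \log D$ while the fundamental solution forces $H$ to grow like $e^{c \sqrt{D}}$; and St\"ormer-type selection of $n$ with $n(n+1)$ smooth, producing triples $(n, 1, n+1)$ of controlled smoothness. One then checks directly that $S < (\log H)^{\kappa + \epsilon}$ holds on infinitely many members of such a family, choosing the parameters so that no accidental large prime divisor appears.

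The main obstacle is part (a). Any $abc$-type estimate of the form $\log H \ll G^{\alpha + o(1)}$ with $\alpha > 0$ caps the admissible $\kappa$, and reaching the conjectured optimal value appears to demand $\log H \ll_{\epsilon} G^{\epsilon}$ for every $\epsilon > 0$, essentially the unconditional $abc$ conjecture itself. Hence the proposal is not a proof of the full conjecture but a blueprint: apply Corollary 10 to upgrade the Lagarias--Soundararajan value of $\kappa$ in part (a), while leaving (b) to be established by explicit construction along the lines above.
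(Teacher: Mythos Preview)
The statement is a \emph{conjecture}; the paper does not prove it and presents it as open. The paper's own contribution in this direction is Theorem~\ref{XYZ improvement}, a partial result towards part (a) at the level $S \lesssim \log\log H$ (times lower-order factors), while for part (b) it simply cites Harper's unconditional proof via the circle method. So there is no proof in the paper to compare against, and you correctly flag that a full proof is out of reach.

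That said, your blueprint for part (a) contains a genuine quantitative gap. You argue that from $\log G \le (\log H)^{\kappa-\epsilon}(1+o(1))$ and Corollary~10 one gets $\log H < \exp\bigl(o((\log H)^{\kappa-\epsilon})\bigr)$, ``a contradiction for large $H$''. But this is \emph{not} a contradiction: taking another logarithm gives $\log\log H \ll (\log H)^{\kappa-\epsilon}$, which holds trivially for every $\kappa-\epsilon>0$. The sub-exponential bound $\log H < G^{C\log\log\log G/\log\log G}$ of Corollary~10 only bites when $G$ is polynomial in $\log H$, equivalently when $S$ is of order $\log\log H$; this is exactly the regime of Theorem~\ref{XYZ improvement}, and it is the most the paper's machinery extracts. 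No positive value of $\kappa$ in part (a) is reachable by this route --- the gap between $S\lesssim\log\log H$ and $S<(\log H)^{\kappa}$ is exponential, and closing it appears to require the full $abc$ conjecture, as you yourself note at the end.

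For part (b), your Pell/St{\o}rmer sketch is reasonable in spirit but incomplete as stated: in $x^{2}-Dy^{2}=\pm 1$ you control the primes dividing $D$, not those dividing $x$ or $y$, so bounding $S$ for the resulting triple needs further argument. The paper does not pursue this and defers entirely to Harper.
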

When a triple $\brac{X,\,Y,\,Z}$ satisfies $X+Y=Z$ and $\brac{X,\,Y,\,Z}=1$, we will call the triple a primitive solution. 
Lagarias and Soundararajan go on to conjecture that $\kappa = \frac{3}{2}$. They note however that to prove the above conjecture, one need only prove that there exists a $\kappa_{0}>0$ satisfying part a) and a $\kappa_{1}< \infty$ satisfying part b). As a) and b) are independent, monotonicity would then imply the existence of a unique constant $\kappa$.

Lagarias and Soundararajan prove part b) assuming the Generalised Riemann Hypothesis, and show that the $abc$ conjecture implies part a). Further, in Corollary 1 of \cite{harper2016minor}, Harper is able to show unconditionally that the $xyz$-smoothness exponent $\kappa$ is finite, and showed that part b) of the conjuecture holds.

Unconditionally, Lagarias and Soundararajan are able to give the following result.
\begin{theorem*}[Theorem 2.2 of \cite{lagarias2011smooth}]
For each $\epsilon > 0$ there are only finitely many solutions to $X+Y=Z$ satisfying $\brac{X,\,Y,\,Z}=1$ and 
\[S\brac{X,\,Y,\,Z}\leq \brac{3-\epsilon}\log \log H\brac{X,\,Y,\,Z}.\]
\end{theorem*}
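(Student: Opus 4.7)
The plan is to combine the smoothness hypothesis with the unconditional Stewart--Yu inequality
\[\log c < \mathcal{C}_{4}\,G^{1/3}\brac{\log G}^{3}\]
(quoted earlier in the introduction from \cite{stewart2001abc}) to derive a contradiction whenever $H(X,Y,Z)$ is too large.

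Fix $\epsilon>0$ and suppose $(X,Y,Z)$ is a primitive integer solution of $X+Y=Z$ with $S(X,Y,Z)\leq (3-\epsilon)\log\log H(X,Y,Z)$. Write $y:=(3-\epsilon)\log\log H$. Every prime dividing $XYZ$ is at most $y$, so the radical $G:=G(X,Y,Z)$ divides $\prod_{p\leq y}p=\exp(\theta(y))$. By the Chebyshev estimate $\theta(y)=(1+o(1))y$ we obtain
\[\log G \;\leq\; (3-\epsilon)\brac{1+o(1)}\log\log H \qquad (H\to\infty),\]
and hence $G\leq \brac{\log H}^{3-\epsilon+o(1)}$.

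Since $(X,Y,Z)=1$ the local factors at all finite places equal $1$, so $H(X,Y,Z)=\max(\abs{X},\abs{Y},\abs{Z})$; after adjusting signs we may rewrite the equation as a primitive equation $a+b=c$ with $a,b,c>0$, coprime, $c=\max(\abs{X},\abs{Y},\abs{Z})=H(X,Y,Z)$, and $G(a,b,c)=G$. Feeding this into the Stewart--Yu inequality yields
\[\log H \;\leq\; \mathcal{C}_{4}\,G^{1/3}\brac{\log G}^{3} \;\leq\; \mathcal{C}_{4}\,\brac{\log H}^{1-\epsilon/3+o(1)}\brac{\log\log H}^{3}.\]
Dividing through forces $\brac{\log H}^{\epsilon/3 - o(1)}\leq \mathcal{C}_{4}\brac{\log\log H}^{3}$, which is false for all $H$ larger than some effectively computable $H_{0}(\epsilon)$. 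Hence every solution satisfies $H(X,Y,Z)\leq H_{0}(\epsilon)$, and since only finitely many coprime integer triples have bounded height, only finitely many such triples exist.

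The only nontrivial ingredient is the exponent $1/3$ of Stewart--Yu; with the earlier $G^{2/3+\mathcal{C}_{3}/\log\log G}$ estimate of \cite{StewartYu} the same argument would only yield the statement with $\frac{3}{2}-\epsilon$ in place of $3-\epsilon$, so the constant $3$ in the theorem precisely reflects the exponent $\frac{1}{3}$ in the strongest unconditional $abc$-type estimate. The main obstacle is therefore entirely off-loaded onto the proof of that estimate; after it is invoked the remaining argument is an elementary computation combining the prime number theorem with the smoothness bound.
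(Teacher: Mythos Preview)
Your argument is correct and is exactly the intended one: bound the radical via the smoothness hypothesis and the prime number theorem, feed this into the Stewart--Yu $G^{1/3}$ estimate, and conclude by Northcott. One cosmetic point: the asymptotic $\theta(y)=(1+o(1))y$ that you invoke is the prime number theorem, not a Chebyshev estimate; genuine Chebyshev bounds only give $\theta(y)\leq Cy$ with $C>1$, which would \emph{not} suffice here (you would end up with an exponent $C(3-\epsilon)/3$ on $\log H$, which need not be $<1$). So the PNT is really being used.

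The paper does not give its own proof of this quoted theorem; it only remarks that the proof relies on Northcott and then proves the sharper Theorem~\ref{XYZ improvement}. The template there is identical to yours --- bound $G$ in terms of $S$ (their Lemma~\ref{G<es}), apply an $abc$-type inequality, and finish with Northcott --- except that they replace Stewart--Yu by their own sub-exponential Corollary~10, which is what permits the improvement from $(3-\epsilon)\log\log H$ to roughly $\log\log H\cdot\frac{\log\log\log H}{\log\log\log\log H}$. Your closing remark correctly identifies that the constant $3$ in the theorem is pinned to the exponent $\tfrac{1}{3}$ in the strongest unconditional estimate available.
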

The proof of this, and the improvement we will give below depend heavily on Northcott's Theorem. We recall that Northcott's Theorem tells us that for any given number field $K$ and $B\in \RR,\, B>0$, the set
\[\curly{\brac{X,\,Y,\,Z} \in K^{3} \,:\, H\brac{X,\,Y,\,Z}<B}\]
is finite \cite{bombieri2007heights}.

Using results from this paper, we will improve this bound with the following theorem. 

\begin{theorem}\label{XYZ improvement}
Let $\phi:\RR\rightarrow\RR$ be a function such that $\phi\brac{x}< \log \log x$ with 
\[\lim_{x \rightarrow +\infty}\phi\brac{x}= +\infty.\]
Then there are finitely many solutions to $X+Y=Z$ satisfying $\brac{X,\,Y,\,Z}=1$ and
\begin{equation}\label{S assumption}
S\brac{X,\,Y,\,Z}\leq \log \log H\brac{X,\,Y,\,Z}\frac{\log \log \log H\brac{X,\,Y,\,Z}}{\log \log \log \log H\brac{X,\,Y,\,Z} \phi\brac{\log \log H\brac{X,\,Y,\,Z}}}.    
\end{equation}

\end{theorem}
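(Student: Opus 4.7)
I would argue by contradiction via Northcott's Theorem, using Corollary 10 in place of Stewart--Yu to upgrade the proof of Theorem 2.2 of \cite{lagarias2011smooth}. Suppose there are infinitely many primitive integer triples $\brac{X,Y,Z}$ satisfying \eqref{S assumption}. Since Northcott confines all triples below any fixed height bound to a finite set, $H:=H\brac{X,Y,Z}$ must be unbounded along such a family; pass to a subsequence on which $H\to\infty$. As $X,Y,Z\in\ZZ$ we may take $K=L=\QQ$, so each prime ideal norm reduces to the underlying rational prime and in particular $\text{Nm}\brac{\id{p}_{a}},\,\text{Nm}\brac{\id{p}_{b}},\,\text{Nm}\brac{\id{p}_{c}}\leq S:=S\brac{X,Y,Z}$; inserting this into Corollary 10 gives
\[
\log H \;<\; S^{c_{1}}\,G^{c_{2}\log\log\log G/\log\log G}
\]
for effectively computable constants $c_{1},c_{2}>0$, where $G=\prod_{p\mid XYZ}p$.

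Set $N:=\log\log H$, which tends to infinity along the subsequence. Hypothesis \eqref{S assumption} reads $S\leq N\log N/\brac{\log\log N\cdot\phi(N)}$, while Chebyshev gives $\log G\ll S$. The matching lower bound $\log G>3N\brac{1-o(1)}$, obtained from the consequence $\log H<G^{\frac{1}{3}+o(1)}$ of Theorem 3, pins down $\log\log G=\log N+O\brac{\log\log N}$ and therefore $\log\log\log G=\log\log N+O(1)$. Substituting these asymptotics,
\[
\log G\cdot\frac{\log\log\log G}{\log\log G} \;\ll\; \frac{N\log N}{\log\log N\cdot\phi(N)}\cdot\frac{\log\log N}{\log N} \;=\; O\!\brac{\frac{N}{\phi(N)}}.
\]
Taking the logarithm of the main displayed bound,
\[
N \;\leq\; c_{1}\log S + c_{2}\log G\cdot\frac{\log\log\log G}{\log\log G} \;=\; O\brac{\log N}+O\!\brac{\frac{N}{\phi(N)}} \;=\; o(N),
\]
since $\log N=o(N)$ and $\phi(N)\to\infty$. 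For $N$ large this is a contradiction, so $H$ is bounded along the hypothesised family, contradicting the assumed infinitude of solutions.

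The main technical obstacle is the iterated-logarithm bookkeeping above: one has to verify that the subpolynomial factor $G^{c_{2}\log\log\log G/\log\log G}$ from Corollary 10, when combined with $\log G=O(S)$, produces exactly the cancellation $S\cdot\log\log\log G/\log\log G=O\brac{N/\phi(N)}$. That this is precisely the cancellation required to close the argument is why \eqref{S assumption} takes its delicate shape: the divergent factor $\phi(N)$ is exactly what is consumed in squeezing $N\leq o(N)$, and any bound on $S$ that grew even slightly faster than the right-hand side of \eqref{S assumption} would spoil the contradiction.
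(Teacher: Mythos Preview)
Your proposal is correct and follows essentially the same route as the paper: reduce to large $H$ via Northcott, verify that $S\leq(\log H)^{1/2}$ so that Corollary~10 applies, combine the resulting sub-exponential bound with $\log G\ll S$ from Chebyshev, and squeeze out $\phi(N)=O(1)$ for a contradiction. The factor $S^{c_1}$ in your displayed inequality is not actually produced by Corollary~10 (which gives simply $\log H<G^{c\,\log\log\log G/\log\log G}$), but it is harmless since it contributes only $O(\log N)$ after taking logarithms; the paper's version differs only in that it substitutes $G\leq e^{3S}$ directly and compares the two resulting inequalities for $T$ rather than passing through asymptotics for $\log\log G$.
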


We note that this result implies that there are only finitely many primitive integer triples $\brac{X,\,Y,\,Z}$ satisfying $X+Y=Z$ with 
\[S\brac{X,\,Y,\,Z} < c \log \log H\brac{X,\,Y,\,Z}\]
for any constant $c \in \RR,\, c>0$. This is because for any such $c$, there is a value $H$ such that for any $H\brac{X,\,Y,\,Z}>H,
$ \[\frac{\log \log \log H\brac{X,\,Y,\,Z}}{\log \log \log \log H\brac{X,\,Y,\,Z} \phi\brac{\log \log H\brac{X,\,Y,\,Z}}} > c,\]
and by Northcott's Theorem, there are only finitely many triples $\brac{X,\,Y,\,Z}$ satisfying $H\brac{X,\,Y,\,Z}<H$.
The statement above then follows from Theorem~\ref{XYZ improvement}, and along with the result given in Theorem~\ref{XYZ improvement} is also an improvement on the $3-\epsilon$ in Theorem 2.2 of \cite{lagarias2011smooth}.

\begin{remark*}
Independently, using different methods, Gy\"ory has been able to show a similar result to that of these results, but over the base field $K$ \cite{gyorynewpaper}. We will discuss this further in section 7.

When this manuscript was completed, Professor Gy\"{o}ry informed me about a sharper $abc$ inequality over $\QQ$ and imaginary quadratic number fields by Mochizuki, Fesenko, Hoshi, Minamide and Porowski (submitted for publication). However, this result relies on Mochizuki's results in Inter-universal Teichm\"uller Theory \cite{mochizuki2021inter}, the veracity of which is currently being debated \cite{scholze2018abc}.
\end{remark*}

\section*{Acknowledgements}
I wish to thank Evgeniy Zorin for his advice and support on this paper, and to thank K\'alm\'an Gy\"ory for his helpful comments and encouragement on this paper. I would also like to thank Sanju Velani for his insightful comments on the paper.

\section{Preliminary Lemmas}
In this section we state pre-existing lemmas which we will repeatedly use throughout the proof of Theorem 1. In the rest of the paper, $\mathcal{C}_{11},\,\mathcal{C}_{12}, \dots$ denote effectively computable constants, and we will, where relevant, state what these constants depend on. In many cases, the constants depend on properties determined by a certain field; in these cases we will sometimes explicitly give which properties of the field the constants depend on.

We note that by Lemma 1 of \cite{bugeaud1996bounds}, we can find a system of fundamental units $\eta_{1},\dots,\, \eta_{r}$ where $r$ is the unit rank of $F$ such that
\begin{equation}\label{good system of fund units}
\prod_{i=1}^{r} \h{\eta_{i}} \leq \mathcal{C}_{11}R_{F}    
\end{equation}
where 
\[\mathcal{C}_{11} = \frac{\brac{\brac{r-1}!}^{2}}{2^{r-2}d^{r-1}}\]
and $R_{F}$ is the regulator of the field $F$. Throughout this paper we will use such a system of fundamental units, and often refer to them as "the fundamental units" of the field in question.

\begin{lemma}\label{pillars}
Let $F$ be a number field of degree $d$, and let $\alpha \in \Of{F}\backslash \Of{F}^{*}$. Then there is an effectively computable number $\mathcal{C}_{12}\brac{F}$, depending on the fundamental units of $\Of{F}$, and an $\epsilon \in \Of{F}^{*}$ such that
\[\house{\epsilon \alpha} \leq \mathcal{C}_{12} \abs{\Normf{F}{\alpha}}^{1/d}\]
where $\house{\alpha}$ denotes the house of $\alpha$. 
\end{lemma}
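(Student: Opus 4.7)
The plan is to exploit the classical logarithmic embedding and the Dirichlet unit theorem: one multiplies $\alpha$ by a carefully chosen unit in order to ``balance'' its conjugates so that no single one can exceed the geometric-mean size $|\Normf{F}{\alpha}|^{1/d}$ by more than a factor depending only on $F$.

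Concretely, let $\sigma_{1},\dots,\sigma_{r_{1}}$ be the real embeddings of $F$ and $\sigma_{r_{1}+1},\dots,\sigma_{r_{1}+r_{2}}$ be representatives of the pairs of complex embeddings, with $d=r_{1}+2r_{2}$ and unit rank $r=r_{1}+r_{2}-1$. Define the logarithmic embedding $L:F^{*}\to\RR^{r_{1}+r_{2}}$ by
\[L(\beta)_{i}=e_{i}\log\abs{\sigma_{i}(\beta)},\]
where $e_{i}=1$ for $1\le i\le r_{1}$ and $e_{i}=2$ for $r_{1}<i\le r_{1}+r_{2}$. Two facts are standard: (i) $\sum_{i}L(\beta)_{i}=\log\abs{\Normf{F}{\beta}}$, and (ii) $L(\Of{F}^{*})$ is a full-rank lattice $\Lambda$ inside the trace-zero hyperplane $H=\{x:\sum x_{i}=0\}$, with covolume equal to the regulator $R_{F}$, and a fundamental domain whose diameter is bounded by a constant $\mathcal{C}(F)$ depending only on the fundamental units $\eta_{1},\dots,\eta_{r}$ (for instance, by the usual Minkowski-type estimate it suffices to take $\mathcal{C}(F)=\frac{1}{2}\sum_{j=1}^{r}\abs{L(\eta_{j})}$).

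First I would set $m=\frac{1}{r_{1}+r_{2}}\log\abs{\Normf{F}{\alpha}}$ and form the vector $v:=L(\alpha)-(m,\dots,m)$, which by fact (i) lies in $H$. Next I would invoke fact (ii) to find $\epsilon\in\Of{F}^{*}$ such that $L(\epsilon)+v$ lies in the chosen fundamental domain of $\Lambda$ translated to the origin; this yields
\[\abs{L(\epsilon\alpha)_{i}-m}=\abs{L(\epsilon)_{i}+v_{i}}\le\mathcal{C}(F)\qquad\text{for every }i.\]
Exponentiating and using $e_{i}\le 2$ then gives, for every embedding $\sigma_{i}$,
\[\abs{\sigma_{i}(\epsilon\alpha)}\le\exp\!\brac{\frac{m+\mathcal{C}(F)}{e_{i}}}\le e^{\mathcal{C}(F)}\abs{\Normf{F}{\alpha}}^{\frac{1}{e_{i}(r_{1}+r_{2})}}\le\mathcal{C}_{12}(F)\abs{\Normf{F}{\alpha}}^{1/d},\]
where the last inequality absorbs the (bounded) discrepancy between $e_{i}(r_{1}+r_{2})$ and $d$ using the hypothesis $\alpha\notin\Of{F}^{*}$, which forces $\abs{\Normf{F}{\alpha}}\ge 1$. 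Taking the maximum over $i$ produces the required bound on $\house{\epsilon\alpha}$.

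The only real obstacle is bookkeeping: one must check that the constant $\mathcal{C}_{12}$ depends genuinely only on the fundamental units of $\Of{F}$ (equivalently, on the regulator via \eqref{good system of fund units}) and not on $\alpha$. This reduces to showing that a fundamental domain of the lattice $\Lambda$ spanned by $L(\eta_{1}),\dots,L(\eta_{r})$ has diameter effectively bounded in terms of $\abs{L(\eta_{j})}$, which is a routine linear-algebra estimate for lattices given by an explicit basis; everything else is direct from the unit theorem and the product formula.
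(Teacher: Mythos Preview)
The paper does not actually prove this lemma; it simply cites it as Lemma~1.3.8 of Natarajan--Thangadurai, \emph{Pillars of Transcendental Number Theory}. Your argument via the logarithmic embedding and Dirichlet's unit theorem is the classical one and is morally correct, but there is a genuine slip in the final step that you should repair.

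The problem is your choice of centre $m=\frac{1}{r_{1}+r_{2}}\log\abs{\Normf{F}{\alpha}}$. After translating by the unit you obtain $\abs{\sigma_{i}(\epsilon\alpha)}\le e^{\mathcal{C}(F)}\abs{\Normf{F}{\alpha}}^{1/(e_{i}(r_{1}+r_{2}))}$, and you then claim this is $\le \mathcal{C}_{12}(F)\abs{\Normf{F}{\alpha}}^{1/d}$. For complex places ($e_{i}=2$) one has $2(r_{1}+r_{2})\ge r_{1}+2r_{2}=d$, so the exponent comparison goes the right way. But for \emph{real} places ($e_{i}=1$) one has $r_{1}+r_{2}\le d$, with strict inequality whenever $r_{2}>0$; then $\abs{\Normf{F}{\alpha}}^{1/(r_{1}+r_{2})}\ge\abs{\Normf{F}{\alpha}}^{1/d}$, and the ratio $\abs{\Normf{F}{\alpha}}^{1/(r_{1}+r_{2})-1/d}$ is unbounded in $\alpha$. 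So the ``bounded discrepancy'' you invoke cannot be absorbed into a constant depending only on $F$.

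The fix is immediate: centre instead at the \emph{weighted} point $w=(e_{1},\dots,e_{r_{1}+r_{2}})\cdot\tfrac{1}{d}\log\abs{\Normf{F}{\alpha}}$, which also has coordinate sum $\log\abs{\Normf{F}{\alpha}}$ since $\sum_{i}e_{i}=d$. Then $v=L(\alpha)-w\in H$, you choose $\epsilon$ so that $L(\epsilon)+v$ lies in the fundamental domain, and you get directly
\[
e_{i}\log\abs{\sigma_{i}(\epsilon\alpha)}\le \tfrac{e_{i}}{d}\log\abs{\Normf{F}{\alpha}}+\mathcal{C}(F),
\]
hence $\abs{\sigma_{i}(\epsilon\alpha)}\le e^{\mathcal{C}(F)}\abs{\Normf{F}{\alpha}}^{1/d}$ for every $i$, with no exponent mismatch to worry about. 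With this correction your argument is complete and is indeed the standard proof.
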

We recall that $\house{\alpha}$, the house of $\alpha$, is defined to be the maximal absolute value of the conjugates of $\alpha$ over $\CC$.
\begin{proof}
This is Lemma 1.3.8 from \cite{Natarajan2020}.
\end{proof}

\begin{lemma}\label{sunitbound}
Let $F$ be an algebraic number field of degree $d$ with set of normalised places $M_{F}$, and let $S$ be a finite subset of $M_{F}$ which contains $S_{\infty}$, the set of infinite places. Let $s$ be the cardinality of $S$, $\id{p}_{1},\,\dots,\, \id{p}_{t}$ the prime ideals corresponding to finite places of $S$ and let $P=\max_{i}\textrm{Nm}_{\QQ}^{F}\brac{\id{p}_{i}}$. Further let $R_{S}$ be the $S$-regulator of $F$. We note that $R_{S}=i_{S}R\prod_{i=1}^{t}\log \text{Nm}_{\QQ}^{F}\brac{\id{p}_{i}}$, where $i_{S}$ is a positive divisor of the class number $h_{F}$ of $F$ and $R$ is the regulator of $F$ (cf.  \cite{gyHory2008abc}\cite{gyory2010s}\cite{gyHory2006bounds}). Given $\alpha \comma \beta$, non-zero elements of $F$, we consider the $S$-unit equation $\alpha x + \beta y = 1$ in $x,\,y$, where $x \comma y$ are $S$-units. Let $r$ denote the unit rank of $F$ and let $\mathcal{R}=\max\curly{h_{F},\, \mathcal{C}_{13}(r,\,d)R}$, where $\mathcal{C}_{13}\brac{r,\,d}$ is given explicitly in \cite{gyHory2008abc}. Then, if $t=0$, all solutions $x,\, y$ of the above equation satisfy
\[\max\curly{h(x),\, h(y)}\leq \mathcal{C}_{13}\brac{r,\,d}\max\curly{h(\alpha),\, h(\beta),\, 1}.\]
If $t>0$ then we obtain
\[\max\curly{h(x),\, h(y)}\leq \mathcal{C}_{14}\brac{r,\,d}h_{F}R\brac{\log^{*}R}\mathcal{R}^{t+1}\brac{\log^{*}\mathcal{R}}\brac{\frac{P}{\log^{*}P}}R_{S}\max\curly{h(\alpha),\, h(\beta),\, 1},\]
where $\mathcal{C}_{14}\brac{r,\,d}$ is also explicitly given in \cite{gyHory2008abc}.
\end{lemma}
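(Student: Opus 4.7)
The plan is to follow the standard route for bounding heights of solutions of $S$-unit equations via linear forms in logarithms, and then optimise the dependence on the field invariants so that the bound takes the stated shape. First I would fix a system of fundamental units $\eta_1,\dots,\eta_r$ satisfying \eqref{good system of fund units}, and choose generators $\pi_1,\dots,\pi_t$ for a subgroup of finite index (a divisor of $h_F$) of the $S$-unit group modulo roots of unity; these can be taken so that their heights are controlled by $\log \mathrm{Nm}_{\QQ}^{F}(\id{p}_i)$ and the product of their logs is at most $R_S$ up to the factor $i_S$. Any solution $x$ of $\alpha x+\beta y=1$ then admits a factorisation $x=\zeta\,\eta_1^{b_1}\cdots\eta_r^{b_r}\,\pi_1^{a_1}\cdots\pi_t^{a_t}$ with $\zeta$ a root of unity, and similarly for $y$. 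The point is that the height of $x$ is then, up to effective constants depending on $r,d$, comparable to $\max(|a_j|,|b_i|)$ weighted by heights of the generators, so it suffices to bound the exponents $a_j,b_i$.

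Next I would extract a linear form in logarithms. Take a place $v_0 \in S$ at which $|x|_{v_0}$ is largest; then, from $\alpha x = 1-\beta y$, the quantity $|\alpha x-1|_{v_0}=|\beta y|_{v_0}$ is small relative to $h(y)$, so $\log\bigl|\alpha\,\eta_1^{b_1}\cdots\pi_t^{a_t}\zeta - 1\bigr|_{v_0}$ is a nontrivial linear form in the $v_0$-adic logarithms of $\alpha,\eta_i,\pi_j$ (with a root of unity absorbed). If $v_0$ is archimedean I would apply the sharpest available Baker-type lower bound (e.g.\ Matveev), and if $v_0$ corresponds to some $\id{p}_i$ I would apply Yu's $\id{p}$-adic linear-forms estimate from \cite{yu2007p}. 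In both cases the lower bound involves the product of the heights of the $\eta_i$ and $\pi_j$, which we have arranged to equal $R_S$ (up to $i_S$), and a factor $P/\log^* P$ coming from the $\id{p}$-adic case, together with the standard $\log^* B$ factor in the maximal exponent.

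Comparing this lower bound with the trivial upper estimate $\log|\alpha x-1|_{v_0}\le -c\,B + O(1)$, where $B=\max(|a_j|,|b_i|)$, yields $B\le \mathcal{C}(r,d)\,h_F R (\log^* R)\,\mathcal{R}^{\,t+1}(\log^*\mathcal{R})\bigl(P/\log^* P\bigr) R_S\,\max\{h(\alpha),h(\beta),1\}$. Converting back from $B$ to $h(x),h(y)$ via the comparisons from the first paragraph gives the stated inequality. In the case $t=0$, the group of $S$-units equals the unit group, so there are no $\pi_j$'s and no $\id{p}$-adic step: only a single archimedean Baker estimate is needed, which explains the much simpler bound $\mathcal{C}_{13}(r,d)\max\{h(\alpha),h(\beta),1\}$.

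The main obstacle is not the overall strategy, which is now classical, but rather the bookkeeping required to obtain precisely the polynomial dependence on $h_F, R, R_S, P$ claimed here. In particular, to get the factor $\mathcal{R}^{t+1}\log^*\mathcal{R}$ rather than an exponential dependence on $t$, one must choose the $S$-unit generators so that both individual heights and the product of heights (which equals $R_S$ up to $i_S$) are simultaneously controlled; this requires Minkowski/Bugeaud-Győry style lemmas on short bases of unit lattices. Once that is in place, plugging the Matveev and Yu bounds in the form stated in \cite{gyHory2008abc} gives the claimed inequality with an explicit $\mathcal{C}_{14}(r,d)$.
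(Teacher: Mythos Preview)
The paper does not actually prove this lemma: its entire proof reads ``This is part of Theorem~A from \cite{gyHory2008abc}.'' So there is no argument in the paper to compare against. Your sketch is essentially the classical Baker--Gy\H{o}ry--Yu strategy that underlies Gy\H{o}ry's Theorem~A, and as a roadmap it is broadly correct; for the purposes of this paper, however, a citation suffices.

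One small slip in your sketch worth flagging: you say ``take a place $v_0\in S$ at which $|x|_{v_0}$ is largest; then $|\alpha x-1|_{v_0}=|\beta y|_{v_0}$ is small relative to $h(y)$.'' This is backwards. If $|x|_{v_0}$ is large then $|\alpha x|_{v_0}$ is large and hence so is $|1-\alpha x|_{v_0}=|\beta y|_{v_0}$. The standard move is instead to pick $v_0\in S$ at which $|y|_{v_0}$ is \emph{minimal}; since $y$ is an $S$-unit, the product formula forces $-\log|y|_{v_0}\ge h(y)/s$ for some $v_0\in S$, and then $|\beta y|_{v_0}$ really is small, so $|\alpha x-1|_{v_0}$ is small and the linear-forms machinery applies to $\alpha x$. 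With that correction the outline matches the approach in \cite{gyHory2008abc} and \cite{gyHory2006bounds}.
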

\begin{proof}
This is part of Theorem A from \cite{gyHory2008abc}.
\end{proof}

\begin{lemma}\label{Yu Varieties Bound}
Let $\alpha_{1} \comma \dots \comma \alpha_{n}$ be algebraic numbers and $F$ a number field of degree $d$ containing $\alpha_{1} \comma \dots \comma \alpha_{n}$. Let $\id{p}$ be a prime ideal of $\Of{F}$ lying above the rational prime $p$ with ramification index $e_{\id{p}}$ and residue class degree $f_{\id{p}}$. For $\alpha \in F \comma \alpha \neq 0$, we write $\ordid{\alpha}$ for the exponent to which $\id{p}$ divides the principal fractional ideal generated by $\alpha$ in $F$, and we set $\ordid{0}=+\infty$. Let $b_{1}\comma \dots b_{n}$ be integers, and set $\Theta = \alpha_{1}^{b_{1}} \cdots \alpha_{n}^{b_{n}}-1$. Assume that $\Theta \neq 0$. Finally, set $h'\brac{\alpha_{j}}=\max\curly{\h{\alpha_{j}} \comma \frac{1}{16e^{2}d^{2}}}$. Then
\begin{equation*}
    \ordid{\Theta}<\brac{16ed}^{2\brac{n+1}}n^{5/2} \log\brac{2nd}\log\brac{2d}\cdot e_{\id{p}}^{n}\frac{\emph{\text{Nm}}_{\QQ}^{F}\brac{\id{p}}}{\brac{\log \emph{\text{Nm}}_{\QQ}^{F}\brac{\id{p}}}^{2}}\prod_{i=1}^{n}h'\brac{\alpha_{i}}\log B,
\end{equation*}
where $B = \max \curly{\abs{b_{1}} \comma \dots \comma \abs{b_{n}}\comma 3}$.
\end{lemma}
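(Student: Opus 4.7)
The plan is to follow the classical architecture for a $\mathfrak{p}$-adic linear forms in logarithms estimate, in the sharper group-variety form developed by Yu in \cite{yu2007p}. Setting $\Lambda = b_{1}\log_{\mathfrak{p}}\alpha_{1} + \cdots + b_{n}\log_{\mathfrak{p}}\alpha_{n}$, where $\log_{\mathfrak{p}}$ denotes the $\mathfrak{p}$-adic logarithm (extended from a neighbourhood of $1$ to all of $F^{\times}$ via a suitable power of each $\alpha_{j}$), the identity $\Theta = \exp_{\mathfrak{p}}\brac{\Lambda} - 1$ converts a lower bound for $\ordid{\Theta}$ into a lower bound for $\ordid{\Lambda}$ up to an easily controlled error. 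I would argue by contradiction: assume $\ordid{\Theta}$ exceeds the right-hand side of the stated bound and derive an impossibility.

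The first main step is the auxiliary construction. Using Siegel's lemma, I would build a nonzero polynomial $P\brac{X_{1},\dots,X_{n}}$ of carefully controlled partial degrees and logarithmic height, such that the associated analytic function $F(z) = P\brac{\alpha_{1}^{z},\dots,\alpha_{n}^{z}}$ is $\mathfrak{p}$-adically very small at a prescribed finite grid of integer points. In Yu's refinement, the multiplicative group $\mathbb{G}_{m}^{n}$ is replaced by a more general commutative algebraic group $G$, and $P$ is chosen as a global section of a suitable ample line bundle on a compactification of $G$; this substitution is precisely what produces the factor $\text{Nm}_{\QQ}^{F}\brac{\id{p}}/\brac{\log \text{Nm}_{\QQ}^{F}\brac{\id{p}}}^{2}$ and the tight dependence $\brac{16ed}^{2\brac{n+1}} n^{5/2}\log\brac{2nd}\log\brac{2d}$ visible on the right-hand side of the bound.

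The second main step is the analytic extrapolation: combining Cauchy estimates on $\mathfrak{p}$-adic discs with a Schnirelman-type maximum principle, the $\mathfrak{p}$-adic smallness of $F$ at the initial grid propagates iteratively to smallness, and eventually to actual vanishing, on a much larger set of translates. The assumed lower bound on $\ordid{\Lambda}$ is what powers each iteration, while the partial degrees of $P$ bound how long the procedure can run before its conclusion becomes incompatible with $P \neq 0$. The parameters must be tuned so that the final extrapolated vanishing set outstrips what $P$ can accommodate.

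The hardest and genuinely delicate step — and the main obstacle — is the concluding multiplicity estimate: one needs a Philippon- or W\"ustholz-type theorem on the group variety $G$ asserting that a nonzero $P$ cannot vanish at the large set of translates produced by the extrapolation to the required order, unless the $\alpha_{j}$ satisfy a multiplicative dependence that is excluded a priori by the hypothesis $\Theta \neq 0$. This clash between analytic extrapolation and algebraic multiplicity yields the desired contradiction, and a careful final optimisation — balancing the partial degrees of $P$, the size of the initial grid, the number of extrapolation steps, and the input quantities $h'\brac{\alpha_{i}},\ \log B,\ e_{\mathfrak{p}},\ \text{Nm}_{\QQ}^{F}\brac{\id{p}}$ and $d$ — produces exactly the stated shape of the bound. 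It is in this final optimisation that the truncated height $h'\brac{\alpha_{j}} = \max\curly{\h{\alpha_{j}},\ \frac{1}{16e^{2}d^{2}}}$ and the logarithmic dependence on $B$ enter naturally, the truncation being needed so that the estimates on $F$ do not degenerate when some $\alpha_{j}$ have very small height.
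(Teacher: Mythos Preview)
Your proposal sketches the full transcendence-theoretic architecture underlying Yu's theorem, but the paper itself does not prove this lemma at all: it simply records that the statement is a consequence of the main theorem of \cite{yu2007p}, given on page 190. In other words, the paper treats this as an imported black-box result, and the ``proof'' is a one-line citation.

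Your outline is a reasonable high-level description of how Yu actually establishes such bounds (auxiliary construction on a group variety via Siegel's lemma, $\mathfrak{p}$-adic extrapolation, and a zero/multiplicity estimate to close the argument), and nothing in it is obviously wrong as a sketch. But it is a sketch of a long and technically heavy paper, not a self-contained proof: each of the three steps you name---especially the multiplicity estimate and the final parameter optimisation that pins down the precise constants $(16ed)^{2(n+1)}n^{5/2}\log(2nd)\log(2d)$---requires substantial work that you have not carried out. For the purposes of this paper, the appropriate ``proof'' is exactly what the paper does: cite \cite{yu2007p} and move on. If you intend your write-up to match the paper, replace your proposal with that citation; if you intend to genuinely reprove Yu's theorem, be aware that what you have written is an outline and not a proof, and the hard content lies in filling in the steps you have only named.
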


\begin{proof}
This is a consequence of the main theorem of \cite{yu2007p}, given on page 190.
\end{proof}

\begin{lemma}\label{primeidealthm}
Let $F$ be a number field with ring of integers $\Of{F}$. Apply a total ordering to the prime ideals, so that if $\emph{\text{Nm}}_{\QQ}^{F}~\brac{\id{x}}>\emph{\text{Nm}}_{\QQ}^{F}\brac{\id{y}}$, then $\id{x} \succ \id{y}$. Arbitrarily order ideals of the same norm. Then there is an effectively computable positive constant $\mathcal{C}_{15}$ such that for every positive integer $r$ we have
\[\prod_{i=1}^{r}\frac{\emph{\text{Nm}}_{\QQ}^{F} ~\brac{\id{p}_{i}}}{\log \emph{\text{Nm}}_{\QQ}^{F}~\brac{\id{p}_{i}}}>\brac{\frac{r}{\mathcal{C}_{15}}}^{r}.\]
\end{lemma}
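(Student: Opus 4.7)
The plan is to apply an effective Chebyshev-type upper bound for the prime ideal counting function of $F$, and then invoke Stirling's inequality. Write
\[\pi_{F}(x) = \#\curly{\id{p} \subset \Of{F} \,:\, \text{Nm}_{\QQ}^{F}\brac{\id{p}} \leq x}.\]
Since each rational prime $p$ has at most $d = \squar{F:\QQ}$ prime ideals of $\Of{F}$ lying above it, we have $\pi_{F}(x) \leq d\,\pi(x)$, where $\pi$ is the classical prime-counting function of $\ZZ$. Combined with Chebyshev's upper bound $\pi(x) \leq C_{0}\,x/\log x$ for $x \geq 2$, this produces an effectively computable constant $A = A(d)$ such that
\[\pi_{F}(x) \leq A\,\frac{x}{\log x} \qquad \text{for all } x \geq 2.\]

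Next I would evaluate this at $x = \text{Nm}_{\QQ}^{F}\brac{\id{p}_{i}}$. Under the given total ordering, $\id{p}_{1},\dots,\id{p}_{i}$ are $i$ distinct prime ideals of norm at most $\text{Nm}_{\QQ}^{F}\brac{\id{p}_{i}}$, so $i \leq \pi_{F}\brac{\text{Nm}_{\QQ}^{F}\brac{\id{p}_{i}}}$, and rearranging gives
\[\frac{\text{Nm}_{\QQ}^{F}\brac{\id{p}_{i}}}{\log \text{Nm}_{\QQ}^{F}\brac{\id{p}_{i}}} \geq \frac{i}{A}.\]
Multiplying these bounds for $i = 1,\dots,r$ and applying Stirling's inequality $r! \geq (r/e)^{r}$ yields
\[\prod_{i=1}^{r} \frac{\text{Nm}_{\QQ}^{F}\brac{\id{p}_{i}}}{\log \text{Nm}_{\QQ}^{F}\brac{\id{p}_{i}}} \geq \frac{r!}{A^{r}} \geq \brac{\frac{r}{eA}}^{r},\]
and the lemma follows with $\mathcal{C}_{15} = eA$, which is effective in $d$.

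The only delicate point is that $y/\log y$ is not monotone on $\squar{2,e}$, so one must ensure that the Chebyshev-type bound $\pi_{F}(x) \leq A\,x/\log x$ holds uniformly for \emph{all} $x \geq 2$, not merely asymptotically. This is harmless: since every nonzero prime ideal has norm at least $2$, only finitely many values of $x$ require special attention, and they can be absorbed by enlarging $A$ by a bounded amount depending on $d$. Similarly, the arbitrary tie-breaking rule for prime ideals of equal norm plays no role, since the key inequality $i \leq \pi_{F}\brac{\text{Nm}_{\QQ}^{F}\brac{\id{p}_{i}}}$ holds for every admissible ordering.
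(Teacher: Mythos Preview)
Your proof is correct and follows essentially the same structure as the paper's: bound $\pi_{F}(x)$ above by a constant times $x/\log x$, deduce $\text{Nm}_{\QQ}^{F}\brac{\id{p}_{i}}/\log \text{Nm}_{\QQ}^{F}\brac{\id{p}_{i}} \geq i/A$ from $i \leq \pi_{F}\brac{\text{Nm}_{\QQ}^{F}\brac{\id{p}_{i}}}$, multiply, and apply $r! \geq (r/e)^{r}$. The only difference is that the paper obtains the bound on $\pi_{F}$ from the Landau Prime Ideal Theorem, whereas you use the more elementary observation $\pi_{F}(x) \leq d\,\pi(x)$ together with the classical Chebyshev bound for rational primes; your route is slightly cleaner for effectivity purposes since it avoids appealing to effective error terms in Landau's asymptotic.
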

\begin{proof}
Let $\pi_{F}(x)$ denote the number of prime ideals in number field $F$ of norm less than or equal to $x$. By the Landau Prime Ideal Theorem \cite{landau1903neuer}, we know that $\pi_{F}(x)\sim \frac{x}{\log x}$. Partially order the prime ideals as in the statement of the Lemma. Then by Landau, \[\pi_{F} \brac{\text{Nm}_{\QQ}^{F} \brac{\id{p}_{j}}} \sim \frac{\text{Nm}_{\QQ}^{F} \brac{\id{p}_{j}}}{\log \brac{\text{Nm}_{\QQ}^{F}\brac{\id{p}_{j}}}}.\] Thus by Landau, there exists an effectively computable number $\mathcal{C}_{16}$ such that \[\frac{\text{Nm}_{\QQ}^{F}\brac{\id{p}_{j}}}{\log \brac{\text{Nm}_{\QQ}^{F}\brac{\id{p}_{j}}}} > j/\mathcal{C}_{16}.\] Thus, using the inequality $r!\geq \brac{r/e}^{r}$, we see that
\[\prod_{j=1}^{r} \frac{\text{Nm}_{\QQ}^{F}\brac{\id{p}_{j}}}{\log\brac{\text{Nm}_{\QQ}^{F}\brac{\id{p}_{j}}}}>\frac{r!}{\mathcal{C
}_{16}^{r}} \geq \brac{\frac{r}{\mathcal{C}_{17}e}}^{r} \geq \brac{\frac{r}{\mathcal{C}_{18}}}^{r},\]
which proves the claim in the lemma.
\end{proof}

We finally state a lemma that we will use to tidy our end arguments.

\begin{lemma}\label{last lemma}
If $x,\,a \in \mathbb{R}$ with $\frac{a}{\log a}<x$, then $a<\max \curly{e,\,2x \log x}$.
\end{lemma}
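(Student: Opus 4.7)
\emph{Proof proposal.} The plan is to argue by cases on the size of $a$, reducing everything to the elementary monotonicity of $f(y) := y/\log y$. If $a \leq e$, the conclusion is immediate because $\max\{e,\,2x\log x\} \geq e \geq a$, so the only substantive case is $a > e$.

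In that range $\log a > 1$, and standard calculus shows that on $(1,\infty)$ the function $f$ attains its minimum value $e$ at $y=e$ and is strictly increasing on $(e,\infty)$. Hence the hypothesis $a/\log a < x$ combined with $a > e$ forces $x > f(a) \geq e$, and therefore also $x/\log x > 2$ (since $y/\log y$ at $y=e$ equals $e>2$ and is increasing on $(e,\infty)$).

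I would then argue by contradiction: suppose $a \geq 2x\log x$. Because $x > e$ we have $2x\log x > 2e > e$, so the monotonicity of $f$ on $(e,\infty)$ yields
\[ \frac{a}{\log a} \;\geq\; \frac{2x\log x}{\log(2x\log x)} \;=\; \frac{2x\log x}{\log 2 + \log x + \log\log x}. \]
This last fraction is at least $x$ precisely when $2\log x \geq \log 2 + \log x + \log\log x$, equivalently $\log(x/\log x) \geq \log 2$, i.e.\ $x/\log x \geq 2$, which was established in the previous paragraph. Therefore $a/\log a \geq x$, contradicting the hypothesis, and so $a < 2x\log x \leq \max\{e,\,2x\log x\}$.

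There is no genuine obstacle in this argument; the only things to watch are the edge case $a \leq e$ (which is exactly why the $\max$ appears in the statement of the lemma) and the verification that $2x\log x$ lies in the range where $f$ is monotone, both of which are immediate from $x>e$.
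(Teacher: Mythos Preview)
Your proof is correct. The paper takes a slightly different, more direct route: from $a/\log a < x$ it immediately reads off $a < x\log a$; taking logarithms in the hypothesis gives $\log a - \log\log a < \log x$, and combining this with the elementary inequality $\log\log a < \tfrac12\log a$ (valid for all $a>1$) yields $\log a < 2\log x$; substituting back then gives $a < x\log a < 2x\log x$. So the paper proceeds by a short forward chain of inequalities, whereas you argue by contradiction via the monotonicity of $y\mapsto y/\log y$ on $(e,\infty)$. Both arguments are brief and rest on the same underlying fact (that $\log\log$ is negligible against $\log$); the paper's version avoids the contradiction setup and the auxiliary check that $2x\log x>e$, while your version makes the role of the function $y/\log y$ more explicit and treats the edge case $a\le e$ openly, which the paper leaves implicit.
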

\begin{proof}
If $\frac{a}{\log a}<x$, then 
\begin{equation}\label{last lemma 1}
    a< x \log a.
\end{equation}

As $\frac{a}{\log a}<x$, we see that $\log a - \log \log a < \log x$. Further, we can show that $\frac{\log a}{2}<\log a - \log \log a$.

Combining these we get that 
\begin{equation}\label{last lemma 2}
    \log a< 2\log x.
\end{equation} 
Multiplying together \eqref{last lemma 1} and \eqref{last lemma 2} we obtain that $a< 2x \log x$. 
\end{proof}

\section{Proof of the Main Theorem}

Let $K$ be a number field with ring of integers $\Of{K}$, class number $h_{K}$, and Hilbert Class Field $L$. Let $\squar{K:\QQ}=d$, so by the tower law, $[L:\QQ]=h_{K}d$.

Take $a,\,b,\, c:=-a-b \in \Of{K}$ so that
\begin{equation}\label{defining feature of abc}
a+b+c=0,
\end{equation} 
with the assumption that $a\Of{K},\, b\Of{K}$ and $c\Of{K}$ are coprime. We write
\begin{align}
    a\Of{K}&=\id{p}_{1}^{e_{1}}\cdots \id{p}_{t}^{e_{t}} \nonumber \\
    b\Of{K}&=\id{q}_{1}^{f_{1}} \cdots \id{q}_{u}^{f_{u}} \nonumber \\
    c\Of{K}&=\id{r}_{1}^{g_{1}} \cdots \id{r}_{v}^{g_{v}},
\end{align}
where $\id{p}_{i},\, \id{q}_{j},\,$ and $\id{r}_{k}$ are prime ideals of $\Of{K}$ and $e_{i},\, f_{j},\, g_{k}$ are integers.

A key property of $L=HCF(K)$ is that every ideal $\id{I}$ of $\Of{K}$ is principal in $\Of{L}$; that is $\id{I}\Of{L} =\alpha \Of{L}$ for some $\alpha \in \Of{L}$. By Lemma \ref{pillars}, we can pick the generator of each ideal so that if $\alpha$ is the generator, then 
\begin{equation}\label{height and norm}
 \h{\alpha} \leq \log \house{\alpha}\leq \log \brac{\mathcal{C}_{19}\brac{K} \abs{\Normf{L}{\alpha}}^{1/d}}=\mathcal{C}_{20}\brac{K}\log \abs{\Normf{L}{\alpha}}.   
\end{equation}

We note that the dependence of the constants is on $K$ rather than $L$, as $L$ is uniquely determined by $K$. Further, for such algebraic $\alpha$ we have
\begin{align*}
\log \abs{\Normf{L}{\alpha}}\leq d\h{\alpha},
\end{align*}
giving us that 

\begin{equation}\label{height log comparison}
\mathcal{C}_{21}\brac{K}\log \abs{\Normf{L}{\alpha}}\leq \h{\alpha} \leq \mathcal{C}_{22}\brac{K}\log \abs{\Normf{L}{\alpha}}.
\end{equation}

Recalling this, we write
\begin{align}\label{primes in K, ideals in L}
	\id{p}_{i}\Of{L}&=a_{i}\Of{L} \nonumber \\
	\id{q}_{j}\Of{L}&=b_{j}\Of{L} \nonumber \\
	\id{r}_{k} \Of{L}&=c_{k}\Of{L}, 
\end{align}
where $a_{i}, \, b_{j}, \, c_{k}$ satisfy \eqref{height and norm}.

We can also write $a\Of{L},\, b\Of{L}$ and $c\Of{L}$ as a product of prime ideals of $\Of{L}$. Knowing this, we will write 
\begin{equation}\label{Definition of G}
G=\prod_{\substack{\id{P} \text{ prime ideal} \\ \id{P} \subset \Of{L} \\ \id{P} \mid \brac{abc}\Of{L} }} \text{Nm}_{\QQ}^{L} \brac{\id{P}}.
\end{equation}
Note that by our assumptions this is equivalent to taking the field to be $L$ in equation (1.7) of \cite{masser2002abc}. Further we will denote the prime ideal of $\Of{L}$ of largest norm dividing $a\Of{L}$ by $\id{p}_{a}$, and similarly for $b$ and $c$.

From \eqref{primes in K, ideals in L} we can write $a,\,b,\,c$ as follows:
\begin{align}\label{factorisation}
a&=u_{a}a_{1}^{e_{1}}\cdots a_{t}^{e_{t}} \nonumber \\
b&=u_{b}b_{1}^{f_{1}}\cdots b_{u}^{f_{u}} \nonumber \\
c&=u_{c}c_{1}^{g_{1}} \cdots c_{v}^{g_{v}}
\end{align}
where $u_{a},\, u_{b}$ and $u_{c}$ are units of $\mathcal{O}_{L}$. We will also often write $u_{a}a'+u_{b}b'+u_{c}c'=0$ where $a'=\prod_{i=1}^{s}a_{i}$ and similarly for $b'$ and $c'$. After relabeling if needed, we can assume that \eqref{double inequality} holds. We note that if $\h{c'}\leq 1$ then straightforwardly the claim holds. This is because necessarily $h(a')\leq h(b')\leq h(c')\leq 1$ and Theorem 1 readily follows from \eqref{relation between H and h}. Similarly, after some work we do below finding a bound on $h(c')$, we see that if $h(b')\leq 1$ then again we'll find the claim straightforwardly follows, and the same logic will hold if $h(a')\leq 1$. Thus we assume in the following that 
 \begin{equation}\label{1<ha<hb<hc}
 1<h(a')\leq h(b') \leq h(c').
 \end{equation}
 
  Dividing through by $u_{c}c'=c$ in \eqref{defining feature of abc} we obtain that
\begin{equation}\label{ready for s}
-\frac{u_{a}a'}{u_{c}c'}-\frac{u_{b}b'}{u_{c}c'}=1,
\end{equation}
so we are in a position to apply Lemma \ref{sunitbound}.

Before doing this, we note that by the remarks around \eqref{property of H}, $\log H_{L}\brac{a,\,b,\,c} = \log H_{L}\brac{\frac{a}{c},\, \frac{b}{c},\, 1}$. This allows us to move between representatives of the projective point $\squar{a:\, b:\, c} \in \mathbb{P}^{2}\brac{L}$.

First, following the notation from Lemma \ref{sunitbound}, initially take $S$ to be the set of infinite places of $L$. Applying this to \eqref{ready for s}, we obtain that
\begin{align}\label{first sunit bound}
\max\curly{\h{-\frac{u_{a}}{u_{c}}},\,\h{-\frac{u_{b}}{u_{c}}}}&=\max\curly{\h{\frac{u_{a}}{u_{c}}},\,\h{\frac{u_{b}}{u_{c}}}} \nonumber\\
&\leq\mathcal{C}_{23}\brac{K} \max \curly{\h{\frac{a'}{c'}},\, \h{\frac{b'}{c'}},\, 1}.    
\end{align}

We note similar bounds also hold if we divide \eqref{defining feature of abc} through by $u_{a}a'$ or $u_{b}b'$. Recall that $\h{xy}\leq \h{x}+\h{y}$ \cite{bombieri2007heights}\cite{Waldschmidt}. By this fact and our assumptions on $h(a'),\,h(b') \text{ and } h(c')$, we deduce that 
\begin{equation}
\max \curly{\h{\frac{a'}{c'}},\, \h{\frac{b'}{c'}},\, 1} \leq \max\curly{\h{a'}+\h{c'},\, \h{b'}+\h{c'}} \leq 2\h{c'}. 
\end{equation}

It thus follows that

\begin{align}\label{app 1 of first sunit bound}
\h{\frac{u_{a}a'}{u_{c}c'}}&\leq \h{\frac{u_{a}}{u_{c}}}+\h{a'}+\h{c'} \nonumber \\
&\leq \mathcal{C}_{24}\h{c'} +2\h{c'} \nonumber \\
&= \mathcal{C}_{25} \h{c'}. 
\end{align}

Similarly, we also obtain that
\begin{align}\label{app 2 of first sunit bound}
\h{\frac{u_{b}b'}{u_{c}c'}}&\leq \mathcal{C}_{26} \h{c'}.
\end{align}

We again apply Lemma \ref{sunitbound}, but this time with a different set of $S$-units. Now let $S=S_{\infty} \cup \curly{\id{p} \,:\, \id{p} \mid c\Of{L} }$ where $\id{p}$ refers both to the prime ideal $\id{p}$ and its corresponding place. Applying Lemma \ref{sunitbound} in this case to \eqref{ready for s}, we obtain that

\begin{align}\label{second sunit bound}
\max \curly{\h{\frac{u_{a}}{u_{c}c'}},\,\h{\frac{u_{b}}{u_{c}c'}} } &\leq \mathcal{C}_{27}\brac{K}\brac{\frac{\text{Nm}_{\QQ}^{L}\brac{\id{p}_{c}}}{\log^{*}\text{Nm}_{\QQ}^{L}\brac{\id{p}_{c}}}}\brac{\prod_{\substack{\id{p} \in \Of{L} \\ \id{p} \mid c\Of{L}}}\log \text{Nm}_{\QQ}^{L}\brac{\id{p}}}\max\curly{\h{a'},\, \h{b'},\, 1} \nonumber \\
&\leq \mathcal{C}_{27}\brac{K}\text{Nm}_{\QQ}^{L}\brac{\id{p}_{c}}\brac{\prod_{\substack{\id{p} \in \Of{L} \\ \id{p} \mid c\Of{L} \\ \id{p} \neq \id{p}_{c}}}\log \text{Nm}_{\QQ}^{L}\brac{\id{p}}}\h{b'}.
\end{align}

Again, $L$ is defined by $K$ uniquely, hence the dependence of the constants here on $K$ rather than $L$. Further, by \eqref{1<ha<hb<hc}, we have that $\max\curly{\h{a'},\, \h{b'},\, 1} = \h{b'}$.

From \eqref{second sunit bound} we obtain that
\begin{align}\label{app 1 of second sunit bound}
\h{\frac{u_{a}a'}{u_{c}c'}} &\leq \h{\frac{u_{a}}{u_{c}c'}}+\h{a'} \nonumber \\
&\leq \mathcal{C}_{27}\brac{K}\text{Nm}_{\QQ}^{L}\brac{\id{p}_{c}}\brac{\prod_{\substack{\id{p} \in \Of{L} \\ \id{p} \mid c\Of{L} \\ \id{p} \neq \id{p}_{c}}}\log \text{Nm}_{\QQ}^{L}\brac{\id{p}}} \h{b'} +\h{a'} \nonumber \\
&\leq \brac{\mathcal{C}_{27}\brac{K}\text{Nm}_{\QQ}^{L}\brac{\id{p}_{c}}\brac{\prod_{\substack{\id{p} \in \Of{L} \\ \id{p} \mid c\Of{L} \\ \id{p} \neq \id{p}_{c}}}\log \text{Nm}_{\QQ}^{L}\brac{\id{p}}}+1} \h{b'} \nonumber \\
&\leq \mathcal{C}_{28}\brac{K}\text{Nm}_{\QQ}^{L}\brac{\id{p}_{c}}\brac{\prod_{\substack{\id{p} \in \Of{L} \\ \id{p} \mid c\Of{L} \\ \id{p} \neq \id{p}_{c}}}\log \text{Nm}_{\QQ}^{L}\brac{\id{p}}} h(b').
\end{align}

Similarly, we find that
\begin{align}\label{app 2 of second sunit bound}
\h{\frac{u_{b}b'}{u_{c}c'}} &\leq \mathcal{C}_{29}\brac{K}\text{Nm}_{\QQ}^{L}\brac{\id{p}_{c}}\brac{\prod_{\substack{\id{p} \in \Of{L} \\ \id{p} \mid c\Of{L} \\ \id{p} \neq \id{p}_{c}}}\log \text{Nm}_{\QQ}^{L}\brac{\id{p}}} h(b').
\end{align}

 We now choose another set $S$, this time containing the infinite places and the finite places corresponding to the prime ideals dividing $bc \Of{L}$; that is, $S=S_{\infty} \cup \curly{\id{p}\,:\, \id{p} \mid bc \Of{L}}$. Applying Lemma \ref{sunitbound} to \eqref{ready for s} with this $S$
 we obtain that
 
 \begin{align}\label{third sunit bound}
\max \curly{\h{\frac{u_{a}}{u_{c}c'}},\,\h{\frac{u_{b}b'}{u_{c}c'}} } 
&\leq \mathcal{C}_{30}\brac{K}\max \curly{\text{Nm}_{\QQ}^{L}\brac{\id{p}_{b}},\,\text{Nm}_{\QQ}^{L}\brac{\id{p}_{c}} }\brac{\prod_{\substack{\id{p} \in \Of{L} \\ \id{p} \mid bc\Of{L} \\ }}\log \text{Nm}_{\QQ}^{L}\brac{\id{p}}}\max\curly{\h{a'},\, 1}.
\end{align}

From \eqref{1<ha<hb<hc} and \eqref{third sunit bound} we obtain that
\begin{align}\label{app 1 of third sunit bound}
\h{\frac{u_{a}a'}{u_{c}c'}} &\leq \h{\frac{u_{a}}{u_{c}c'}}+\h{a'} \nonumber \\
&\leq \mathcal{C}_{30}\brac{K}\max \curly{\text{Nm}_{\QQ}^{L}\brac{\id{p}_{b}},\,\text{Nm}_{\QQ}^{L}\brac{\id{p}_{c}} }\brac{\prod_{\substack{\id{p} \in \Of{L} \\ \id{p} \mid bc\Of{L} \\ }}\log \text{Nm}_{\QQ}^{L}\brac{\id{p}}}\h{a'} +\h{a'} \nonumber \\
&\leq \brac{\mathcal{C}_{30}\brac{K}\max \curly{\text{Nm}_{\QQ}^{L}\brac{\id{p}_{b}},\,\text{Nm}_{\QQ}^{L}\brac{\id{p}_{c}} }\brac{\prod_{\substack{\id{p} \in \Of{L} \\ \id{p} \mid bc\Of{L} \\ }}\log \text{Nm}_{\QQ}^{L}\brac{\id{p}}}+1} \h{a'} \nonumber \\
&\leq \brac{\mathcal{C}_{31}\brac{K}\max \curly{\text{Nm}_{\QQ}^{L}\brac{\id{p}_{b}},\,\text{Nm}_{\QQ}^{L}\brac{\id{p}_{c}} }\brac{\prod_{\substack{\id{p} \in \Of{L} \\ \id{p} \mid bc\Of{L} \\ }}\log \text{Nm}_{\QQ}^{L}\brac{\id{p}}}} \h{a'}.
\end{align}

Similarly, we find that
\begin{align}\label{app 2 of third sunit bound}
\h{\frac{u_{b}b'}{u_{c}c'}} &\leq \mathcal{C}_{32}\max \curly{\text{Nm}_{\QQ}^{L}\brac{\id{p}_{b}},\,\text{Nm}_{\QQ}^{L}\brac{\id{p}_{c}} }\brac{\prod_{\substack{\id{p} \in \Of{L} \\ \id{p} \mid bc\Of{L} }}\log \text{Nm}_{\QQ}^{L}\brac{\id{p}}} h(a').
\end{align}

By consideration of \eqref{app 1 of first sunit bound}, \eqref{app 2 of first sunit bound}, we see that
\begin{align}\label{D}
\max \curly{\h{\frac{u_{a}a'}{u_{c}c'}},\,\h{\frac{u_{b}b'}{u_{c}c'}}}&< \max\curly{\mathcal{C}_{25},\, \mathcal{C}_{26}}\h{c'} \nonumber \\
&=\mathcal{C}_{33} \h{c'},
\end{align}

while \eqref{app 1 of second sunit bound} and \eqref{app 2 of second sunit bound} show that

\begin{align}\label{E}
\max \curly{\h{\frac{u_{a}a'}{u_{c}c'}},\,\h{\frac{u_{b}b'}{u_{c}c'}}}&< \max\curly{\mathcal{C}_{28}, \mathcal{C}_{29}}\text{Nm}_{\QQ}^{L}\brac{\id{p}_{c}}\brac{\prod_{\substack{\id{p} \in \Of{L} \\ \id{p} \mid c\Of{L} \\ \id{p} \neq \id{p}_{c}}}\log \text{Nm}_{\QQ}^{L}\brac{\id{p}}} h(b') \nonumber
\\
&= \mathcal{C}_{34} \text{Nm}_{\QQ}^{L}\brac{\id{p}_{c}}\brac{\prod_{\substack{\id{p} \in \Of{L} \\ \id{p} \mid c\Of{L} \\ \id{p} \neq \id{p}_{c}}}\log \text{Nm}_{\QQ}^{L}\brac{\id{p}}} h(b').
\end{align}

In the same way, it follows from \eqref{app 1 of third sunit bound} and \eqref{app 2 of third sunit bound} that
\begin{align}\label{F}
\max \curly{\h{\frac{u_{a}a'}{u_{c}c'}},\,\h{\frac{u_{b}b'}{u_{c}c'}}}&< \max\curly{\mathcal{C}_{31}, \mathcal{C}_{32}}\max \curly{\text{Nm}_{\QQ}^{L}\brac{\id{p}_{b}},\,\text{Nm}_{\QQ}^{L}\brac{\id{p}_{c}} }\brac{\prod_{\substack{\id{p} \in \Of{L} \\ \id{p} \mid bc\Of{L} }}\log \text{Nm}_{\QQ}^{L}\brac{\id{p}}} h(a') \nonumber
\\
&= \mathcal{C}_{35} \max \curly{\text{Nm}_{\QQ}^{L}\brac{\id{p}_{b}},\,\text{Nm}_{\QQ}^{L}\brac{\id{p}_{c}} }\brac{\prod_{\substack{\id{p} \in \Of{L} \\ \id{p} \mid bc\Of{L}}}\log \text{Nm}_{\QQ}^{L}\brac{\id{p}}} h(a').
\end{align}

We next prove the following lemma.

\begin{lemma}\label{relation between ord and h}
Let $\alpha \in \curly{a,\,b,\,c}$. Then
\begin{equation}
\h{\alpha'} \leq \mathcal{C}_{36}\brac{\max_{\id{p} \mid \prin{\alpha}_{L}} \ordid{\alpha}} \log G.
\end{equation}
\end{lemma}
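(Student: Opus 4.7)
The plan is to prove the inequality $h(\alpha') \leq \mathcal{C}_{36} E \log G$, with $E := \max_{\id{p}\mid \prin{\alpha}_L} \ordid{\alpha}$, via the chain
\[
h(\alpha') \;\leq\; \mathcal{C}_{20}(K)\log\abs{\Normf{L}{\alpha'}} \;\leq\; \mathcal{C}_{20}(K)\log\abs{\Normf{L}{\alpha}} \;\leq\; \mathcal{C}_{20}(K) E \log G.
\]

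First I would exploit the factorisation $\alpha' = \prod_{i=1}^{t} a_i$ from \eqref{factorisation}, where each $a_i$ is a generator of $\id{p}_i\Of{L}$ chosen via Lemma~\ref{pillars} so as to satisfy \eqref{height and norm}. By the subadditivity of the Weil height $h(xy)\le h(x)+h(y)$ and the multiplicativity of the field norm,
\[
h(\alpha') \;\leq\; \sum_{i=1}^{t} h(a_i) \;\leq\; \mathcal{C}_{20}(K)\sum_{i=1}^{t} \log\abs{\Normf{L}{a_i}} \;=\; \mathcal{C}_{20}(K)\log\abs{\Normf{L}{\alpha'}},
\]
which gives the first inequality in the chain.

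Second, I would compare $\abs{\Normf{L}{\alpha'}}$ with $\abs{\Normf{L}{\alpha}}$. Writing $\alpha\Of{K} = \prod_i \id{p}_i^{e_i}$ and recalling $\id{p}_i\Of{L} = a_i\Of{L}$, the principal ideal $\alpha'\Of{L} = \prod_i \id{p}_i\Of{L}$ is precisely the squarefree radical of $\alpha\Of{L}$ in $\Of{L}$, and therefore divides $\alpha\Of{L}$ as ideals of the Dedekind domain $\Of{L}$; since the ideal norm is multiplicative and at least $1$ on integral ideals, this yields $\abs{\Normf{L}{\alpha'}} \leq \abs{\Normf{L}{\alpha}}$. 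Since $L/K$ is unramified (as $L$ is the Hilbert Class Field of $K$), factoring $\alpha\Of{L}$ into primes of $\Of{L}$ and bounding every exponent by its maximum $E$ gives
\[
\abs{\Normf{L}{\alpha}} \;=\; \prod_{\id{P}\mid \alpha\Of{L}} \text{Nm}_{\QQ}^{L}\brac{\id{P}}^{\text{ord}_{\id{P}}(\alpha)} \;\leq\; \Bigg(\prod_{\id{P}\mid \alpha\Of{L}} \text{Nm}_{\QQ}^{L}\brac{\id{P}}\Bigg)^{\!E} \;\leq\; G^{E},
\]
by the definition \eqref{Definition of G} of $G$.

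Stringing the three estimates together yields $h(\alpha') \leq \mathcal{C}_{20}(K) E \log G$, which is the claim with $\mathcal{C}_{36} = \mathcal{C}_{20}(K)$. The steps are all clean since the substantive work (producing small generators for principal ideals of $\Of{L}$) has already been done in Lemma~\ref{pillars} and \eqref{height and norm}, and the factor $E$ arises simply from replacing each local exponent $\text{ord}_{\id{P}}(\alpha)$ by its maximum. The only technical point to watch is that \eqref{height and norm} has an implicit additive $\log \mathcal{C}_{19}(K)$ term which becomes the multiplicative $\mathcal{C}_{20}(K)$ only once $\abs{\Normf{L}{a_i}}$ is bounded below; this is handled by enlarging $\mathcal{C}_{36}$ to absorb the finitely many small-norm contributions. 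This is the only real obstacle, and it is routine.
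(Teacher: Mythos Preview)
Your argument is correct and matches the paper's: both pass through $h(\alpha') \leq C\log\abs{\Normf{L}{\alpha}}$ and then bound $\log\abs{\Normf{L}{\alpha}} \leq E\log G$ via the prime factorisation of $\alpha\Of{L}$ in exactly the way you describe. Note only that, despite the typo $a'=\prod_{i=1}^{s}a_i$ in the text, the paper's $\alpha'$ is actually defined by $\alpha = u_\alpha\alpha'$ (so $\alpha' = \prod_i a_i^{e_i}$ with exponents), whence $\abs{\Normf{L}{\alpha'}} = \abs{\Normf{L}{\alpha}}$ outright and your divisibility step is unnecessary---but harmless, and the rest is identical.
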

\begin{proof}
If $\alpha$ is a unit, then we define $\max \,\ordid{\alpha}:=1$. 

By construction of $\alpha'$, we have that $\h{\alpha'} \leq \mathcal{C}_{37}\log \text{Nm}_{\QQ}^{L}\brac{\alpha\Of{L}}$. This follows from the fact that $$\abs{\Normf{L}{\alpha'}}=\abs{\Normf{L}{\alpha}}=\text{Nm}_{\QQ}^{L}\brac{\alpha\Of{L}}.$$

We write a factorisation of $\alpha\Of{L} = \id{P}_{1,L}^{g_{1,L}} \cdots \id{P}_{u_{L},L}^{g_{u_{L},L}}$ into prime ideals of $\Of{L}$. Note this may be different to the ideals given in \eqref{primes in K, ideals in L}, as the ideals in \eqref{primes in K, ideals in L} may not be prime. Working with this prime factorisation, we obtain that
\begin{align*}
\log \text{Nm}_{\QQ}^{L}\brac{\alpha\Of{L}} &= \log \brac{\prod_{i=1}^{u_{L}}\text{Nm}_{\QQ}^{L}\brac{\id{p}_{i,L}^{u_{i},L}}} \\
&= \sum_{i=1}^{u_{L}}u_{i,L}\log \brac{\text{Nm}_{\QQ}^{L}\brac{\id{p}_{i,L}}} \\
&\leq \brac{\max_{\id{p} \mid \prin{\alpha}_{L}} \ordid{\alpha}} \log G.
\end{align*}
The claim then follows. 
\end{proof}

 It follows immediately from \eqref{relation between H and h}, \eqref{D} and Lemma \ref{relation between ord and h} that
\begin{equation}\label{H < ord c}
\frac{\log H_{L}\brac{a,\,b,\,c}}{\mathcal{C}_{33}\log G} \leq \max_{\id{p} \mid \prin{c}_{L}} \ordid{c}.
\end{equation}

Similarly, it follows from \eqref{relation between H and h}, \eqref{E} and Lemma \ref{relation between ord and h} that
\begin{equation}\label{H < ord b}
\frac{\log H_{L}\brac{a,\,b,\,c}}{\mathcal{C}_{34}\log G\, \text{Nm}_{\QQ}^{L}\brac{\id{p}_{c}}\brac{\prod_{\substack{\id{p} \in \Of{L} \\ \id{p} \mid c\Of{L} \\ \id{p} \neq \id{p}_{c}}}\log \text{Nm}_{\QQ}^{L}\brac{\id{p}}}} \leq \max_{\id{p} \mid \prin{b}_{L}} \ordid{b}.
\end{equation}

Further, it follows from \eqref{relation between H and h}, \eqref{F} and Lemma \ref{relation between ord and h} that
\begin{equation}\label{H < ord a}
\frac{\log H_{L}\brac{a,\,b,\,c}}{\mathcal{C}_{35}\log G\, \max \curly{\text{Nm}_{\QQ}^{L}\brac{\id{p}_{b}},\,\text{Nm}_{\QQ}^{L}\brac{\id{p}_{c}} }\brac{\prod_{\substack{\id{p} \in \Of{L} \\ \id{p} \mid bc\Of{L} }}\log \text{Nm}_{\QQ}^{L}\brac{\id{p}}}} \leq \max_{\id{p} \mid \prin{b}_{L}} \ordid{a}.
\end{equation}

We will use Lemma \ref{Yu Varieties Bound} to establish upper bounds for the right-hand sides of \eqref{H < ord c}, \eqref{H < ord b} and \eqref{H < ord a}. In order to do this we need to write $\ordid{c}$, $\ordid{b}$ and $\ordid{a}$ in a form where we're able to use Lemma \ref{Yu Varieties Bound}.

By the coprimeness of $a\Of{L},\, b\Of{L}$ and $c \Of{L}$ we see that
\begin{align}\label{ordid c 1}
\ordid{c}&=\ordid{\frac{c}{b}} \nonumber \\
&=\ordid{\frac{-a-b}{b}} \nonumber \\
&=\ordid{-\frac{a}{b}-1} \nonumber \\
&= \ordid{- \frac{u_{a}}{u_{b}}a_{1}^{e_{1}}\cdots a_{t}^{e_{t}}b_{1}^{f_{1}}\dots b_{u}^{f_{u}}-1}. 
\end{align}

Similarly, 
\begin{align}\label{ordid b 1}
\ordid{b}&=\ordid{\frac{b}{a}} \nonumber \\
&=\ordid{\frac{-a-c}{a}} \nonumber \\
&=\ordid{-\frac{c}{a}-1} \nonumber \\
&= \ordid{- \frac{u_{c}}{u_{a}}c_{1}^{g_{1}}\cdots c_{v}^{g_{v}}a_{1}^{-e_{1}}\dots a_{t}^{-e_{t}}-1}, 
\end{align}

and

\begin{align}\label{ordid a 1}
\ordid{a}&=\ordid{\frac{a}{c}} \nonumber \\
&=\ordid{\frac{-b-c}{c}} \nonumber \\
&=\ordid{-\frac{b}{c}-1} \nonumber \\
&= \ordid{- \frac{u_{b}}{u_{c}}b_{1}^{f_{1}}\cdots b_{u}^{f_{u}}c_{1}^{-g_{1}}\dots c_{v}^{-g_{v}}-1}.
\end{align}

To apply Lemma \ref{Yu Varieties Bound} we need to bound exponents $e_{i}, \, f_{j},\, g_{k}$ and bound the heights of the units $\h{u_{a}},\, \h{u_{b}}$ and $\h{u_{c}}$.

First note that
\begin{align}\label{exponent bound}
\max \curly{\ordid{a},\, \ordid{b},\, \ordid{c}}\leq \log H_{L}\brac{a,\,b,\,c},
\end{align}
directly from the definition of $H_{L}$.

This follows from the definitions of projective and absolute logarithmic heights.

In order to use Yu's bound, we need to manage the heights of $u_{a},\, u_{b}$ and $u_{c}$. To do this, we will use fundamental units of $\Of{L}^{*}$. By Dirichlet's Unit Theorem, there exist fundamental units $\xi_{1}, \dots, \xi_{r}$ of $\Of{L}$, where $r$ is the unit rank of $\Of{L}$ such that all units $u$ of $\Of{L}$ can be written $u=\mu \xi_{1}^{\delta_{1}} \cdots \xi_{r}^{\delta_{r}}$, with $\mu$ a root of unity. We note again that finding a set of fundamental units is computable, for example see \cite{buchmann1987computation}, and we can find a nice system satisfying \eqref{good system of fund units}. Indeed, we could use any system of fundamental units, but this choice is helpful should one wish to explicitly compute the constants. Thus once found, the product $\prod_{i=1}^{r}h'(\xi_{i})$ we will obtain applying Lemma \ref{Yu Varieties Bound} can be upper bounded constants depending on the field. It remains to find an upper bound for $\max_{i} \delta_{i}$.

Note that \eqref{first sunit bound} gives us that 
\[\h{\frac{u_{a}}{u_{c}}} \leq \mathcal{C}_{38}\h{c'}.\] 
Further, 
\[\h{c'} \leq \mathcal{C}_{39}\brac{K}\log \abs{\Normf{L}{c}} \leq \mathcal{C}_{40}\brac{K} \h{c} \leq \mathcal{C}_{41}\brac{K}\log H_{L}(a,\,b,\, c).\]
 It follows from the above comments that 
 \begin{equation}\label{Bound to give delta bound}
\h{\frac{u_{a}}{u_{c}}}\leq \mathcal{C}_{42}\brac{K}\log H_{L}(a,\,b,\,c). 
\end{equation}

Let $L$ have $r_{1}$ real embeddings $\epsilon_{1},\dots,\,\epsilon_{r_{1}}$ and $2r_{2}$ complex embeddings $\epsilon_{r_{1}+1}, \, \overline{\epsilon_{r_{1}+1}},\dots, \epsilon_{r_{2}},\, \overline{\epsilon_{r_{2}}}$. By Dirichlet's Unit Theorem, there are $r:=r_{1}+r_{2}-1$ fundamental units $\xi_{1}, \dots, \xi_{r}$ such that for any unit $u \in \mathcal{O}_{L}^{*}$, $u=\mu \xi_{1}^{\delta_{1}} \cdots \xi_{r}^{\delta_{r}} $ where $\mu$ is a root of unity in $L$ and $\delta_{i} \in \mathbb{Z}$ for all $i$.

We now prove a lemma that gives an upper bound for $\max_{i} \abs{\delta_{i}}$.

\begin{lemma}\label{upper bound delta}
Given the set up above, 
\begin{equation}\label{delta bound}
    \max_{i} \abs{\delta_{i}}\leq \mathcal{C}_{43}\brac{K}\log H_{L}(a,\,b,\,c).
\end{equation}
\end{lemma}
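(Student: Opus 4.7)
The plan is to pass to the logarithmic embedding of $\Of{L}^{*}$, convert the multiplicative expression $u_{a}/u_{c} = \mu \xi_{1}^{\delta_{1}} \cdots \xi_{r}^{\delta_{r}}$ into a linear system for the $\delta_{i}$, and then solve via Cramer's rule so that the resulting bound is in terms of $\h{u_{a}/u_{c}}$, which is already controlled by \eqref{Bound to give delta bound}.

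First I would define the logarithmic embedding $\Lambda \colon \Of{L}^{*} \to \RR^{r+1}$ by $\Lambda\brac{u}_{j} = d_{j} \log \abs{\epsilon_{j}\brac{u}}$, with $d_{j} = 1$ if $\epsilon_{j}$ is real and $d_{j} = 2$ if complex. Since $\mu$ is a root of unity, $\Lambda\brac{\mu} = 0$, and the factorisation of $u_{a}/u_{c}$ yields the vector identity $\Lambda\brac{u_{a}/u_{c}} = \sum_{i=1}^{r} \delta_{i}\, \Lambda\brac{\xi_{i}}$. Projecting onto any $r$ of the $r+1$ coordinates produces an $r \times r$ linear system $M \delta = v$ whose coefficient matrix $M$ has $\abs{\det M} = R_{L}$ by definition of the regulator.

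Next I would apply Cramer's rule: $\delta_{i} = \det M_{i}/\det M$, where $M_{i}$ is $M$ with its $i$-th column replaced by $v$. For any unit $u \in \Of{L}^{*}$, the product formula $\sum_{j} d_{j} \log \abs{\epsilon_{j}\brac{u}} = 0$ forces $\abs{\Lambda\brac{u}_{j}} \leq 2 d_{L}\, \h{u}$ for every $j$, where $d_{L} = \squar{L:\QQ}$. Expanding $\det M_{i}$ by the Leibniz formula and substituting these coordinate bounds, each of the $r!$ terms is the product of $r-1$ entries drawn from the columns $\Lambda\brac{\xi_{k}}$, $k \neq i$, together with one entry drawn from $v$, giving
\begin{equation*}
\abs{\delta_{i}} \leq \frac{r!\, \brac{2 d_{L}}^{r}\, \prod_{k \neq i} \h{\xi_{k}}}{R_{L}}\, \h{u_{a}/u_{c}}.
\end{equation*}

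Finally, since the fundamental units were chosen to satisfy \eqref{good system of fund units}, the combination $\prod_{k} \h{\xi_{k}}/R_{L}$ is bounded by a constant depending only on $K$; the missing factor $\h{\xi_{i}}^{-1}$ can be absorbed because each fundamental unit has height bounded below by a positive constant depending only on $K$. This yields $\abs{\delta_{i}} \leq \mathcal{C}\brac{K}\, \h{u_{a}/u_{c}}$, and combining with \eqref{Bound to give delta bound} completes the argument. The main obstacle is the cancellation of $R_{L}$ in the denominator against the heights of the fundamental units in the numerator; the selection made in \eqref{good system of fund units} is precisely what makes this cancellation clean and keeps the final constant effective.
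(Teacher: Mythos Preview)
Your proof is correct and follows essentially the same route as the paper: pass to the logarithmic embedding, convert $u_a/u_c = \mu\,\xi_1^{\delta_1}\cdots\xi_r^{\delta_r}$ into a linear system $M\delta = v$ with $M$ the regulator matrix, and invert using the bound \eqref{Bound to give delta bound} on $v$. The only difference is cosmetic: the paper simply observes that $M$ (hence $M^{-1}$) is a fixed matrix depending only on $L$, so $M^{-1}$ applied to the cube of side $\mathcal{C}_{44}\log H_L$ is a parallelepiped of linear size $\ll_K \log H_L$, whereas you carry out the inversion explicitly via Cramer's rule and then invoke \eqref{good system of fund units} together with a lower height bound for non-torsion units to cancel $R_L$ --- extra bookkeeping the paper sidesteps, but which one would need anyway to make $\mathcal{C}_{43}$ genuinely explicit.
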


\begin{proof}

 Recall that there are $r+1$ distinct embeddings of $L$ into $\mathbb{C}$. Let us denote these embeddings by $e_i$, $i=1\dots,r+1$ . Furthermore for all $i=1\dots,r+1$, let us define
\[
\begin{aligned}
\varepsilon_i: L &\rightarrow \mathbb{R}\\
\alpha &\to \log|e_i(\alpha)|.
\end{aligned}
\]
Let $u$ be an element of $L$. We see that
\[
h\brac{u}=\frac12\sum_{i=1}^{r+1}N_i\abs{\varepsilon_i\brac{u}},
\]
where $N_i=1$ if the image of $e_i$ is a subset of $\R$, and $N_i=2$ in the complimentary case, when the image of $e_i$ is not a subset of $\R$. This is because
\begin{align*}
    h(u)=\sum_{i=1}^{r+1}N_{i}\log \max\brac{\abs{e_{i}(u),\, 1}}=-\sum_{i=1}^{r+1}N_{i}\log \min\brac{\abs{e_{i}(u),\, 1}},
\end{align*}
where $N_{i}$ is defined as above. Thus 
\begin{align*}
    2h(u)&=\sum_{i=1}^{r+1}N_{i}\log \max\brac{\abs{e_{i}(u),\, 1}}-\sum_{i=1}^{r+1}N_{i}\log \min\brac{\abs{e_{i}(u),\, 1}}\\
    &= \sum_{i=1}^{r+1}N_{i} \abs{\log \abs{e_{i}(u)}}\\
    &=\sum_{i=1}^{r+1}N_{i} \abs{\varepsilon_{i}(u)},
\end{align*}
and thus the identity follows.

We now take advantage of some properties of $u_{a},\,u_{b}$ and $u_{c}$ so we will write them explicitly. In what follows we use the case $u= \frac{u_{a}}{u_{c}}$, but it is true for the other relevant quotients of $u_{a},\, u_{b}$ and $u_{c}$. Write $\frac{u_{a}}{u_{c}}=\mu \xi_{1}^{\delta_{1}}\cdots \xi_{r}^{\delta_{r}}$ where $\mu$ is a root of unity. From the comments above it follows that 
\begin{equation} \label{dev_h}
\begin{aligned}
h\brac{\frac{u_{a}}{u_{c}}}&=\frac12\sum_{i=1}^{r+1}N_i\left|\varepsilon_i\left(\mu \prod_{j=1}^r\xi_j^{\delta_j} \right)\right|\\
&=\frac12\sum_{i=1}^{r+1}N_i\left| \sum_{j=1}^r\delta_j\varepsilon_i(\xi_j)\right|,
\end{aligned}
\end{equation}
where we lose the $\mu$ as it is a root of unity, so for all $i$, $\varepsilon_{i}\brac{\mu}=0$.

From \eqref{Bound to give delta bound}, we know that $\h{\frac{u_{a}}{u_{c}}}\leq \mathcal{C}_{42}\log H_{L}(a,\,b,\,c)$, giving us an upper bound for the absolute logarithmic height of the unit. Along with~\eqref{dev_h}, this implies that, for all $i=1,\dots,r+1$, we have
\begin{equation} \label{matrix_ub_K}
\left| \sum_{j=1}^r\delta_j\varepsilon_i(\xi_j)\right|<\mathcal{C}_{44}\brac{K} \log H_{L}(a,\,b,\,c).
\end{equation}
That is, \eqref{Bound to give delta bound} implies that all the exponents $\delta_j$, $j=1,\dots,r$ satisfy~\eqref{matrix_ub_K}. This holds for all $\varepsilon_{i},\, i=1,\dots,\, r+1$, so \eqref{matrix_ub_K} gives us a system of $r+1$ inequalities.

We pick any $r$ inequalities of $r+1$ in the system~\eqref{matrix_ub_K}. For the sake of concreteness, let us take the first $r$ inequalities. We are going to deduce the upper bound for the system of inequalities~\eqref{matrix_ub_K} where $i=1,\dots,r$. Note that the left-hand side of these inequalities are coordinates of the vector
\[
M\begin{pmatrix} \delta_1 \\ \vdots \\ \delta_r \end{pmatrix},
\]
where the matrix $M$ is defined by
\[
M:=(\varepsilon_i(\xi_j))_{1\leq i,j\leq r}.
\]
By definition, the absolute value of determinant of $M$ is equal to the regulator of the number field $L$, and is thus non-zero. Hence $M$ is non-degenerate. Importantly, the matrix $M$ depends only on the number field $L$. Further, as the value of the regulator is independent of the choice of the $r$ inequalities we picked, it shows that our choice of inequalities is irrelevant and we obtain the same result given a different choice of $r$ inequalities from the $r+1$ in \eqref{matrix_ub_K}.

It follows that the solutions to the system of inequalities~\eqref{matrix_ub_K} for $i=1,\dots,r$ are given by $M^{-1}\mathcal{B}$, where $\mathcal{B}$ is an $r$-dimensional cube $[-\mathcal{C}_{44}\brac{K}\log H_{L}\brac{a,\,b,\,c},\mathcal{C}_{44}\brac{K}\log H_{L}\brac{a,\,b,\,c}]^r$. Thus these solutions form a parallepiped, the form of which depends on $M$ (hence eventually on $L$ only, which is uniquely determined by $K$) and the linear size is given by $\mathcal{C}_{45}\brac{K}\log H_{L}(a,\,b,\,c)$. This means that the solutions $\delta_i$ have an upper bound of the form $\mathcal{C}_{46}\mathcal{C}_{45} \log H(a,\,b,\,c)$, where the constant $\mathcal{C}_{46}$ depends on $M$ (hence actually depends on $K$) only. We thus conclude that 
\begin{equation*}
    \max_{i} \abs{\delta_{i}}\leq \mathcal{C}_{47}\brac{K}\log H_{L}(a,\,b,\,c),
\end{equation*}
as claimed.
\end{proof}

Importantly, as commented during the proof, the method is not changed if we choose a different unit such as $\frac{u_{a}}{u_{b}}$ and so on. Thus this lemma holds for all relevant units in this paper.

We return to considering \eqref{ordid c 1}. We can now use the above after writing the unit in terms of fundamental units as follows:

\begin{align}\label{ordid c}
\ordid{c} &= \ordid{- \frac{u_{a}}{u_{b}}a_{1}^{e_{1}}\cdots a_{t}^{e_{t}}b_{1}^{f_{1}}\dots b_{u}^{f_{u}}-1} \nonumber \\
&=\ordid{\mu \xi_{1}^{\delta_{1}}\cdots \xi_{r}^{\delta_{r}}a_{1}^{e_{1}}\cdots a_{t}^{e_{t}}b_{1}^{f_{1}}\cdots b_{u}^{f_{u}}-1}
\end{align}
where $\mu$ is a root of unity. We are in a position to apply Lemma \ref{Yu Varieties Bound}. Using the notation of Lemma \ref{Yu Varieties Bound}, from \eqref{exponent bound} and Lemma \ref{upper bound delta} we obtain that $\log B \leq \mathcal{C}_{42}\brac{K} \log \log H_{L}(a,\,b,\,c)$.

Applying Lemma \ref{Yu Varieties Bound} on \eqref{ordid c}, we obtain that
\begin{align}
\ordid{c} < & \mathcal{C}_{48}\brac{K}^{r+t+u+2}\brac{r+t+u+1}^{5/2}\log\brac{2d\brac{r+t+u+1}}\frac{\text{Nm}_{\QQ}^{L}\brac{\id{p}_{c}}}{\brac{\log \text{Nm}_{\QQ}^{L}\brac{\id{p}_{c}}}^{2}} \nonumber \\
& h'(\mu)h'(\xi_{1})\cdots h'(\xi_{r}) h'(a_{1}) \cdots h'(a_{t}) h'(b_{1}) \cdots h'(b_{u}) \log \log H_{L}(a,\,b,\,c). 
\end{align}
Note that $\h{\mu}=0$ as $\mu$ is a root of unity so $h'(\mu)= \frac{1}{16e^{2}d^{2}}$, which we take into the constant. Further, we recall our system of fundamental units satisfies \eqref{good system of fund units} so we take $\prod_{i=1}^{r}h'(\xi_{i})$ into the constant.

We further note that $\abs{\Normf{L}{a_{i}}}= \text{Nm}_{\QQ}^{L} \brac{\prin{a_{i}}}=\brac{\text{Nm}_{\QQ}^{K} \brac{\id{p}_{i}}}^{f_{K}}$, and similarly for $b_{j}$ and $c_{k}$ \cite{lang2013algebraic}. We recall that by definition, $f_{K} \leq d$  \cite{neukirch2013algebraic}. Further, the norms of all these prime ideals are greater than $1$, so for all $x\in \curly{a_{1},\dots,\,a_{s},\, b_{1},\dots,\, b_{t}},$ if $\id{a}_{x}$ is the prime ideal associated with $x$, $h'(x)\leq \mathcal{C}_{49}\brac{K} \log \text{Nm}_{\QQ}^{K}\brac{\id{a}_{x}} $. Putting this together with the inequality above, with other bounds used as necessary, we obtain that
\begin{align}\label{ordid(c) leq bound}
\ordid{c}\leq &\mathcal{C}_{50}\brac{K}^{t+u}\brac{r+t+u+1}^{7/2} \log \log H_{L}(a,\,b,\,c)\nonumber\\ &\text{Nm}_{\QQ}^{L}\brac{\id{p}_{c}} \prod_{i=1}^{t} \log \brac{\text{Nm}_{\QQ}^{K}\brac{\id{p}_{i}}}\cdot \prod_{j=1}^{u} \log \brac{\text{Nm}_{\QQ}^{K}\brac{\id{q}_{j}}}.
\end{align}

Similarly, we see that by considering \eqref{ordid b 1} in the same way as above, we obtain that
\begin{align}\label{ordid b}
\ordid{b}&=\ordid{- \frac{u_{c}}{u_{a}}c_{1}^{g_{1}}\cdots c_{v}^{g_{v}}a_{1}^{-e_{1}}\dots a_{t}^{-e_{t}}-1} \nonumber \\
&=\ordid{\mu' \xi_{1}^{\delta'_{1}}\cdots \xi_{r}^{\delta'_{r}}c_{1}^{g_{1}}\cdots c_{v}^{g_{v}}a_{1}^{-e_{1}}\dots a_{t}^{-e_{t}}-1}.
\end{align}
Following the same line of reasoning as above we obtain that
\begin{align}\label{ordid(b) leq bound}
\ordid{b}\leq& \mathcal{C}_{51}\brac{K}^{t+v}\brac{r+t+v+1}^{7/2} \log \log H_{L}(a,\,b,\,c)\nonumber \\ &\text{Nm}_{\QQ}^{L}\brac{\id{p}_{b}} \prod_{i=1}^{t} \log \brac{\text{Nm}_{\QQ}^{K}\brac{\id{p}_{i}}}\cdot \prod_{j=1}^{v} \log \brac{\text{Nm}_{\QQ}^{K}\brac{\id{r}_{j}}}.
\end{align}

In the same way, by considering \eqref{ordid a 1} we further obtain that
\begin{align}\label{ordid a}
\ordid{a}&=\ordid{- \frac{u_{b}}{u_{c}}b_{1}^{f_{1}}\cdots b_{u}^{f_{u}}c_{1}^{-g_{1}}\dots c_{v}^{-g_{v}}-1} \nonumber \\
&=\ordid{\mu'' \xi_{1}^{\delta'_{1}}\cdots \xi_{r}^{\delta'_{r}}b_{1}^{f_{1}}\cdots b_{u}^{f_{u}}c_{1}^{-g_{1}}\dots c_{v}^{-g_{v}}-1},
\end{align}

and as before, applying Lemma \ref{Yu Varieties Bound} gives us that
\begin{align}\label{ordid(a) leq bound}
\ordid{a}\leq& \mathcal{C}_{52}\brac{K}^{u+v}\brac{r+u+v+1}^{7/2} \log \log H_{L}(a,\,b,\,c)\nonumber \\ &\text{Nm}_{\QQ}^{L}\brac{\id{p}_{a}} \prod_{i=1}^{u} \log \brac{\text{Nm}_{\QQ}^{K}\brac{\id{q}_{i}}}\cdot \prod_{j=1}^{v} \log \brac{\text{Nm}_{\QQ}^{K}\brac{\id{r}_{j}}}.
\end{align}

From this point, all constants depend on the field $K$, in particular on the degree of the field $d$, so we omit these dependencies. 

By combining \eqref{H < ord c} and \eqref{ordid(c) leq bound} we obtain that
\begin{align}\label{H< first bound}
\frac{\log H_{L}(a,\,b,\,c)}{\log \log H_{L}(a,\,b,\,c)}<&\mathcal{C}_{53}^{t+u}\brac{r+t+u+1}^{7/2}\log G \text{Nm}_{\QQ}^{L}\brac{\id{p}_{c}}\nonumber \\
 &\prod_{i=1}^{t} \log \brac{\text{Nm}_{\QQ}^{K}\brac{\id{p}_{i}}}\cdot \prod_{j=1}^{u} \log \brac{\text{Nm}_{\QQ}^{K}\brac{\id{q}_{j}}}.
\end{align}

Similarly, combining \eqref{H < ord b} and \eqref{ordid(b) leq bound} gives us that
\begin{align}\label{H< second bound}
\frac{\log H_{L}(a,\,b,\,c)}{\log \log H_{L}(a,\,b,\,c)}<&\mathcal{C}_{54}^{t+v}\brac{r+t+v+1}^{7/2}\log G \cdot \text{Nm}_{\QQ}^{L}\brac{\id{p}_{b}}\cdot\text{Nm}_{\QQ}^{L}\brac{\id{p}_{c}} \nonumber \\ &\prod_{i=1}^{t} \log \brac{\text{Nm}_{\QQ}^{K}\brac{\id{p}_{i}}}\cdot \prod_{j=1}^{v} \log \brac{\text{Nm}_{\QQ}^{K}\brac{\id{r}_{j}}}\cdot \prod_{\substack{\id{p} \in \Of{L} \\ \id{p} \mid c\Of{L} \\ \id{p} \neq \id{p}_{c}}}\log \text{Nm}_{\QQ}^{L}\brac{\id{p}}.
\end{align}

Applying the same idea, combining \eqref{H < ord a} and \eqref{ordid(a) leq bound} gives us that
\begin{align}\label{H< third bound}
\frac{\log H_{L}(a,\,b,\,c)}{\log \log H_{L}(a,\,b,\,c)}<&\mathcal{C}_{55}^{u+v}\brac{r+u+v+1}^{7/2}\log G \cdot \text{Nm}_{\QQ}^{L}\brac{\id{p}_{a}}\cdot\max \curly{\text{Nm}_{\QQ}^{L}\brac{\id{p}_{b}},\,\text{Nm}_{\QQ}^{L}\brac{\id{p}_{c}} } \nonumber \\ &\prod_{i=1}^{u} \log \brac{\text{Nm}_{\QQ}^{K}\brac{\id{p}_{i}}}\cdot \prod_{j=1}^{v} \log \brac{\text{Nm}_{\QQ}^{K}\brac{\id{r}_{j}}}\cdot \prod_{\substack{\id{p} \in \Of{L} \\ \id{p} \mid bc\Of{L} }}\log \text{Nm}_{\QQ}^{L}\brac{\id{p}}.
\end{align}

Multiplying together \eqref{H< first bound},\, \eqref{H< second bound} and \eqref{H< third bound} and bounding some terms for ease, we obtain that

\begin{align}\label{multiplied together 2}
\brac{\frac{\log H_{L}(a,\,b,\,c)}{\log \log H_{L}(a,\,b,\,c)}}^{3} < &\mathcal{C}_{56}^{t+u+v} \brac{r+t+u+v}^{21/2} \brac{\log G}^{3}\nonumber \\
&\brac{\text{Nm}_{\QQ}^{L}\brac{\id{p}_{a}} \text{Nm}_{\QQ}^{L}\brac{\id{p}_{b}} \text{Nm}_{\QQ}^{L}\brac{\id{p}_{c}}^{2}\max \curly{\text{Nm}_{\QQ}^{L}\brac{\id{p}_{b}},\,\text{Nm}_{\QQ}^{L}\brac{\id{p}_{c}} }} \nonumber \\
&\brac{ \prod_{i=1}^{t}  \log \brac{\text{Nm}_{\QQ}^{K}\brac{\id{p}_{i}}} \cdot \prod_{j=1}^{u} \log \brac{\text{Nm}_{\QQ}^{K}\brac{\id{q}_{j}}} \cdot\prod_{k=1}^{v} \log \brac{\text{Nm}_{\QQ}^{K}\brac{\id{r}_{k}}}}^{3} \nonumber \\ & \prod_{\substack{\id{p} \in \Of{L} \\ \id{p} \mid c\Of{L} \\ \id{p} \neq \id{p}_{c}}}\log \text{Nm}_{\QQ}^{L}\brac{\id{p}}\cdot \prod_{\substack{\id{p} \in \Of{L} \\ \id{p} \mid bc\Of{L} }}\log \text{Nm}_{\QQ}^{L}\brac{\id{p}}.
\end{align}

We note that $r$ depends on the field, so we can write $\brac{r+\brac{t+u+v}}^{21/2} \leq \mathcal{C}_{57}\brac{t+u+v}^{21/2}$. Further, for sufficiently large $\mathcal{C}_{57}$ this will absorb $\brac{t+u+v}^{21/2}$, so we can move this into the constant. We thus obtain that

\begin{align}\label{multiplied together (r+s+t) dealt with}
\brac{\frac{\log H_{L}(a,\,b,\,c)}{\log \log H_{L}(a,\,b,\,c)}}^{3} < &\mathcal{C}_{58}^{t+u+v} \brac{\log G}^{3} \brac{\text{Nm}_{\QQ}^{L}\brac{\id{p}_{a}} \text{Nm}_{\QQ}^{L}\brac{\id{p}_{b}} \text{Nm}_{\QQ}^{L}\brac{\id{p}_{c}}^{2}\max \curly{\text{Nm}_{\QQ}^{L}\brac{\id{p}_{b}},\,\text{Nm}_{\QQ}^{L}\brac{\id{p}_{c}} }} \nonumber \\
&\brac{ \prod_{i=1}^{t}  \log \brac{\text{Nm}_{\QQ}^{K}\brac{\id{p}_{i}}} \cdot \prod_{j=1}^{u} \log \brac{\text{Nm}_{\QQ}^{K}\brac{\id{q}_{j}}} \cdot\prod_{k=1}^{v} \log \brac{\text{Nm}_{\QQ}^{K}\brac{\id{r}_{k}}}}^{3} \nonumber \\ & \prod_{\substack{\id{p} \in \Of{L} \\ \id{p} \mid c\Of{L} \\ \id{p} \neq \id{p}_{c}}}\log \text{Nm}_{\QQ}^{L}\brac{\id{p}}\cdot \prod_{\substack{\id{p} \in \Of{L} \\ \id{p} \mid bc\Of{L} }}\log \text{Nm}_{\QQ}^{L}\brac{\id{p}}.
\end{align}

Next we aim to deal with $\prod_{i=1}^{t}  \log \brac{\text{Nm}_{\QQ}^{K}\brac{\id{p}_{i}}} \cdot \prod_{j=1}^{u} \log \brac{\text{Nm}_{\QQ}^{K}\brac{\id{q}_{j}}} \cdot\prod_{k=1}^{v} \log \brac{\text{Nm}_{\QQ}^{K}\brac{\id{r}_{k}}}$. First note that $\text{Nm}_{\QQ}^{K} \brac{\id{P}}^{h_{K}} = \text{Nm}_{\QQ}^{L} \brac{\id{P}\Of{L}}$, where $\id{P}$ is a prime ideal of $\Of{K}$. We follow an idea from the first part of Section 3 of \cite{stewart2001abc}. Let $N$ be the number of prime ideals of $\Of{L}$ such that the prime ideal $\id{P} \mid \brac{abc}\Of{L}$. By definition, these all lie above primes $\id{p}$ of $\Of{K}$, so $N\geq t+u+v$. Thus from these comments and Lemma \ref{primeidealthm} we obtain that
\begin{align}\label{prelims for log primes}
\brac{\frac{t+u+v}{\mathcal{C}_{59}}}^{t+u+v} \leq \brac{\frac{N}{\mathcal{C}_{60}}}^{N} < G,
\end{align}
where $\mathcal{C}_{60}$ is the constant given by Lemma \ref{primeidealthm}.
It follows that
\begin{equation}\label{relate s+t+u and G}
t+u+v < \mathcal{C}_{61} \frac{\log G}{\log \log G}.
\end{equation}

By the arithmetic-geometric mean inequality we obtain that
\begin{align}\label{Big relation with product of logs}
&\prod_{i=1}^{t}  \log \brac{\text{Nm}_{\QQ}^{K}\brac{\id{p}_{i}}} \cdot \prod_{j=1}^{u} \log \brac{\text{Nm}_{\QQ}^{K}\brac{\id{q}_{j}}} \cdot\prod_{k=1}^{v} \log \brac{\text{Nm}_{\QQ}^{K}\brac{\id{r}_{k}}} \nonumber \\
\leq &\brac{\frac{1}{t+u+v}\brac{\sum_{i=1}^{t}\log \brac{\text{Nm}_{\QQ}^{K}\brac{\id{p}_{i}}} +\sum_{j=1}^{u}\log \brac{\text{Nm}_{\QQ}^{K}\brac{\id{q}_{j}}} +\sum_{k=1}^{v}\log \brac{\text{Nm}_{\QQ}^{K}\brac{\id{r}_{k}}}}}^{t+u+v} \nonumber \\
\leq &\brac{\frac{1}{t+u+v}\sum_{\substack{\id{P} \subset \Of{L}\\ \id{P} \text{prime} \\ \id{P} \mid \brac{abc}\Of{L}}} \log \brac{\text{Nm}_{\QQ}^{L}\brac{\id{P}}}}^{t+u+v} \nonumber \\
\leq &\brac{\frac{\log G}{t+u+v}}^{t+u+v}.
\end{align}

It follows from \eqref{relate s+t+u and G} and \eqref{Big relation with product of logs} that
\begin{equation}\label{Final relation between G and product of norms}
\prod_{i=1}^{t}  \log \brac{\text{Nm}_{\QQ}^{K}\brac{\id{p}_{i}}} \cdot \prod_{j=1}^{u} \log \brac{\text{Nm}_{\QQ}^{K}\brac{\id{q}_{j}}} \cdot\prod_{k=1}^{v} \log \brac{\text{Nm}_{\QQ}^{K}\brac{\id{r}_{k}}} < G^{\mathcal{C}_{62}\frac{\log \log \log G}{\log \log G}}.
\end{equation}
The same logic can be used to show that 
\begin{equation}\label{G and log primes from sunit}
\prod_{\substack{\id{p} \in \Of{L} \\ \id{p} \mid c\Of{L} \\ \id{p} \neq \id{p}_{c}}}\log \text{Nm}_{\QQ}^{L}\brac{\id{p}} < G^{\mathcal{C}_{63}\frac{\log \log \log G}{\log \log G}}, 
\end{equation}
and that
\begin{equation}
    \prod_{\substack{\id{p} \in \Of{L} \\ \id{p} \mid bc\Of{L} }}\log \text{Nm}_{\QQ}^{L}\brac{\id{p}} < G^{\mathcal{C}_{64}\frac{\log \log \log G}{\log \log G}}. 
\end{equation}

Applying these to \eqref{multiplied together (r+s+t) dealt with}, we obtain that
\begin{align}
\brac{\frac{\log H_{L}(a,\,b,\,c)}{\log \log H_{L}(a,\,b,\,c)}}^{3} < &\mathcal{C}_{58}^{t+u+v}  \brac{\log G}^{3} \nonumber \\
&\brac{\text{Nm}_{\QQ}^{L}\brac{\id{p}_{a}} \text{Nm}_{\QQ}^{L}\brac{\id{p}_{b}} \text{Nm}_{\QQ}^{L}\brac{\id{p}_{c}}^{2}\max \curly{\text{Nm}_{\QQ}^{L}\brac{\id{p}_{b}},\,\text{Nm}_{\QQ}^{L}\brac{\id{p}_{c}} }} G^{\mathcal{C}_{65}\frac{\log \log \log G}{\log \log G}}.
\end{align}

Further, from Lemma \ref{primeidealthm} we obtain that
\begin{equation}\label{Constant and G}
\mathcal{C}_{58}^{t+u+v} < G^{\frac{\mathcal{C}_{66}}{\log \log G}}.
\end{equation}
Further, note that $\log G =G^{\frac{\log \log G}{\log G}}$. Thus we obtain that
\begin{align}\label{all but pbpc sorted}
\brac{\frac{\log H_{L}(a,\,b,\,c)}{\log \log H_{L}(a,\,b,\,c)}}^{3} <&   \brac{\text{Nm}_{\QQ}^{L}\brac{\id{p}_{a}} \text{Nm}_{\QQ}^{L}\brac{\id{p}_{b}} \text{Nm}_{\QQ}^{L}\brac{\id{p}_{c}}^{2}\max \curly{\text{Nm}_{\QQ}^{L}\brac{\id{p}_{b}},\,\text{Nm}_{\QQ}^{L}\brac{\id{p}_{c}} }} \nonumber \\
&G^{\mathcal{C}_{67}\brac{\frac{\log \log \log G}{\log \log G}+\frac{1}{\log \log G}+\frac{\log \log G}{\log G}}}.
\end{align}

We take the cube root of both sides, before applying Lemma \ref{last lemma}, obtaining

\begin{align}\label{new theorem equation}
\log H_{L}(a,\,b,\,c) <&  \brac{\text{Nm}_{\QQ}^{L}\brac{\id{p}_{a }}\text{Nm}_{\QQ}^{L}\brac{\id{p}_{b}} \text{Nm}_{\QQ}^{L}\brac{\id{p}_{c}}^{2}\max \curly{\text{Nm}_{\QQ}^{L}\brac{\id{p}_{b}},\,\text{Nm}_{\QQ}^{L}\brac{\id{p}_{c}} }}^{\frac{1}{3}} \nonumber \\ &G^{\mathcal{C}_{68}\brac{\frac{\log \log \log G}{\log \log G}+\frac{1}{\log \log G}+\frac{\log \log G}{\log G}}} .
\end{align} 

Note, the dominant term in the power of $G$ is $\frac{\log \log \log G}{\log \log G}$. Combining this with the above proves Theorem~\ref{main theorem}.

\subsection{Corollaries of Theorem \ref{main theorem}}

In this section we show various corollaries of Theorem 1. The first two corollaries depend on the Class Group of $K$ and the ideals that $\id{p}_{b}$ and $\id{p}_{c}$ lie above.

\begin{corollary}\label{CF1}
Assume that $\text{Nm}_{\QQ}^{L}\brac{\id{p}_{b}} > \text{Nm}_{\QQ}^{L}\brac{\id{p}_{c}}$  and that $\id{p}_{b}$ and $\id{p_{c}}$ both lie over prime ideals of $\Of{K}$ that do not generate the class group of $K$. Then
\begin{align}
\log H_{L}(a,\,b,\,c) &<  G^{\frac{1}{3} + \mathcal{C}_{69}\frac{\log \log \log G}{\log \log G}} .
\end{align} 

\end{corollary}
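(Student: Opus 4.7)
The plan is to derive Corollary \ref{CF1} directly from Theorem \ref{main theorem} by showing that, under the stated hypotheses, the cube root of the product of prime-ideal norms on the right-hand side of Theorem \ref{main theorem} is dominated by $G^{1/3}$. The assumption $\text{Nm}_{\QQ}^{L}\brac{\id{p}_{b}} > \text{Nm}_{\QQ}^{L}\brac{\id{p}_{c}}$ forces the inner maximum in Theorem \ref{main theorem} to equal $\text{Nm}_{\QQ}^{L}\brac{\id{p}_{b}}$, so the quantity I need to bound above by $G$ is
\[
N := \text{Nm}_{\QQ}^{L}\brac{\id{p}_{a}} \cdot \text{Nm}_{\QQ}^{L}\brac{\id{p}_{b}}^{2} \cdot \text{Nm}_{\QQ}^{L}\brac{\id{p}_{c}}^{2}.
\]

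The key input is class field theory applied to the Hilbert Class Field. Since $L/K$ is unramified abelian of degree $h_{K}$, and the Artin map identifies $\text{Gal}\brac{L/K}$ with the class group $\text{Cl}\brac{K}$, for any prime $\id{q}$ of $\Of{K}$ the residue degree in $L/K$ of any $\id{P}$ above $\id{q}$ equals the order $m$ of the class $\squar{\id{q}}$ in $\text{Cl}\brac{K}$, and $\id{q}\Of{L}$ factors into exactly $h_{K}/m$ such primes, each of norm $\text{Nm}_{\QQ}^{K}\brac{\id{q}}^{m}$ over $\QQ$. I would apply this with $\id{q}_{b} := \id{p}_{b} \cap \Of{K}$ and $\id{q}_{c} := \id{p}_{c} \cap \Of{K}$: the hypothesis that neither generates $\text{Cl}\brac{K}$ says the corresponding orders $m_{b}$, $m_{c}$ are proper divisors of $h_{K}$, whence $h_{K}/m_{b} \geq 2$ and $h_{K}/m_{c} \geq 2$. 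Consequently the primes of $\Of{L}$ above $\id{q}_{b}$ contribute a factor of at least $\text{Nm}_{\QQ}^{L}\brac{\id{p}_{b}}^{2}$ to $G$, and those above $\id{q}_{c}$ contribute at least $\text{Nm}_{\QQ}^{L}\brac{\id{p}_{c}}^{2}$.

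To finish I would observe that the pairwise coprimality of $a\Of{K}$, $b\Of{K}$, $c\Of{K}$ forces $\id{p}_{a} \cap \Of{K}$ to be distinct from both $\id{q}_{b}$ and $\id{q}_{c}$, so the primes of $\Of{L}$ above these three base primes contribute to $G$ through pairwise disjoint factors of the product defining $G$. This yields $G \geq N$, and substituting $N^{1/3} \leq G^{1/3}$ into Theorem \ref{main theorem} gives the claimed bound with $\mathcal{C}_{69} = \mathcal{C}_{8}$. I do not foresee a serious obstacle here, as the argument is essentially careful bookkeeping of prime splitting in $L/K$. The only point worth flagging is that the hypothesis ``does not generate the class group'' must be interpreted as $\prin{\squar{\id{q}}} \neq \text{Cl}\brac{K}$; when $\text{Cl}\brac{K}$ is non-cyclic this is automatic for every prime, and in either case it is precisely what is needed to force the splitting index $h_{K}/m$ to be at least $2$.
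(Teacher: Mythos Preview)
Your proposal is correct and follows essentially the same approach as the paper: identify the maximum as $\text{Nm}_{\QQ}^{L}\brac{\id{p}_{b}}$, use the splitting behaviour of primes in $L/K$ (a prime $\id{q}\subset\Of{K}$ factors into $h_{K}/P$ primes of $\Of{L}$ with $P$ the order of $\squar{\id{q}}$ in the class group) to get at least two primes of equal norm above each of $\id{q}_{b}$ and $\id{q}_{c}$, and conclude $\text{Nm}_{\QQ}^{L}\brac{\id{p}_{a}}\text{Nm}_{\QQ}^{L}\brac{\id{p}_{b}}^{2}\text{Nm}_{\QQ}^{L}\brac{\id{p}_{c}}^{2}\leq G$. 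Your explicit remark on pairwise coprimality forcing disjoint contributions to $G$, and your caveat about the non-cyclic case, are welcome clarifications but do not alter the argument.
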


\begin{proof}
By assumption,  
\begin{align*}
   & \brac{\text{Nm}_{\QQ}^{L}\brac{\id{p}_{a }}\text{Nm}_{\QQ}^{L}\brac{\id{p}_{b}} \text{Nm}_{\QQ}^{L}\brac{\id{p}_{c}}^{2}\max \curly{\text{Nm}_{\QQ}^{L}\brac{\id{p}_{b}},\,\text{Nm}_{\QQ}^{L}\brac{\id{p}_{c}} }} \\
   &= \brac{\text{Nm}_{\QQ}^{L}\brac{\id{p}_{a }}\text{Nm}_{\QQ}^{L}\brac{\id{p}_{b}}^{2} \text{Nm}_{\QQ}^{L}\brac{\id{p}_{c}}^{2}}.
\end{align*}

Recall that in the Hilbert Class Field $L$, a prime ideal $\id{p}$ of $\Of{K}$ splits into $\frac{h_{K}}{P}$ prime ideals of $\Of{L}$, where $P$ is the order of $\squar{\id{p}}$ in the Class Group of $K$. By assumption, there must be at least two prime ideals dividing $b\Of{L}$ with the same norm $\text{Nm}_{\QQ}^{L}\brac{\id{p}_{b}}$, and similarly for $\text{Nm}_{\QQ}^{L}\brac{\id{p}_{c}}$.

It follows that 
\[\brac{\text{Nm}_{\QQ}^{L}\brac{\id{p}_{a }}\text{Nm}_{\QQ}^{L}\brac{\id{p}_{b}}^{2} \text{Nm}_{\QQ}^{L}\brac{\id{p}_{c}}^{2}} \leq G,\]
and the claim follows.
\end{proof}

\begin{corollary}\label{CF2}
Assume that $\text{Nm}_{\QQ}^{L}\brac{\id{p}_{b}} < \text{Nm}_{\QQ}^{L}\brac{\id{p}_{c}} $ and that $\id{p}_{c}$ lies above a prime ideal of $\Of{K}$ that has order greater than 2 in the class group of $K$. Then 
\begin{align}
\log H_{L}(a,\,b,\,c) &<  G^{\frac{1}{3} + \mathcal{C}_{70}\frac{\log \log \log G}{\log \log G}} .
\end{align} 
\end{corollary}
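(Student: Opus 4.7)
The plan is to start from Theorem~\ref{main theorem} and reduce its right-hand side to $G^{1/3+\cdots}$ by using how the prime $\id{P}_c\subset\Of{K}$ lying below $\id{p}_c$ splits in the Hilbert class field $L$; the strategy parallels the proof of Corollary~\ref{CF1}. First I would use the hypothesis $\text{Nm}_\QQ^L(\id{p}_b)<\text{Nm}_\QQ^L(\id{p}_c)$ to replace the maximum appearing in Theorem~\ref{main theorem} by $\text{Nm}_\QQ^L(\id{p}_c)$, which collapses the leading factor to $\brac{\text{Nm}_\QQ^L(\id{p}_a)\text{Nm}_\QQ^L(\id{p}_b)\text{Nm}_\QQ^L(\id{p}_c)^3}^{1/3}$. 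It then suffices to establish
\[
\text{Nm}_\QQ^L(\id{p}_a)\,\text{Nm}_\QQ^L(\id{p}_b)\,\text{Nm}_\QQ^L(\id{p}_c)^3\leq G.
\]

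For the main step I would invoke that $L/K$ is unramified abelian with Galois group isomorphic to the class group of $K$. By class field theory the prime $\id{P}_c\Of{L}$ factors into exactly $h_K/n$ distinct primes of $\Of{L}$, where $n$ is the order of $[\id{P}_c]$ in the class group, each of common norm $\text{Nm}_\QQ^L(\id{p}_c)=\text{Nm}_\QQ^K(\id{P}_c)^n$. The hypothesis is to be read as forcing $h_K/n\geq 3$, so that at least three distinct primes of $\Of{L}$ divide $c\Of{L}$, each contributing one factor of $\text{Nm}_\QQ^L(\id{p}_c)$ to the product defining $G$. By the coprimality of $a\Of{K}, b\Of{K}, c\Of{K}$, the primes $\id{p}_a$ and $\id{p}_b$ lie above primes of $\Of{K}$ distinct from $\id{P}_c$, so they occur as further independent factors in $G$. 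Multiplying these contributions gives the required inequality, and substituting back into Theorem~\ref{main theorem} while absorbing $\mathcal{C}_8$ into a new effectively computable $\mathcal{C}_{70}$ yields the corollary.

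The main obstacle will be pinning down the interpretation of the hypothesis ``order greater than $2$ in the class group''. Taken literally with $n\geq 3$, one obtains only $h_K/n\leq h_K/3$ primes above $\id{P}_c$, which guarantees three primes only when $h_K\geq 9$; by direct analogy with Corollary~\ref{CF1}, where ``does not generate the class group'' is used precisely to produce at least two primes above $\id{P}_b$ and $\id{P}_c$, the intended reading is that the stated condition gives $h_K/n\geq 3$, i.e.\ $\id{P}_c$ splits into at least three primes in $L$. Once this reading is adopted, the remaining argument is an immediate application of Theorem~\ref{main theorem}, with no new estimation beyond what is already encoded in its power-of-$G$ error term.
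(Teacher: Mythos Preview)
Your proposal is correct and follows exactly the paper's approach: use $\text{Nm}_\QQ^L(\id{p}_b)<\text{Nm}_\QQ^L(\id{p}_c)$ to collapse the maximum, then invoke the splitting of $\id{P}_c$ in $L$ to produce three primes of equal norm dividing $c\Of{L}$, yielding $\text{Nm}_\QQ^L(\id{p}_a)\text{Nm}_\QQ^L(\id{p}_b)\text{Nm}_\QQ^L(\id{p}_c)^3\leq G$. You are also right to flag the tension in the hypothesis---the paper's own proof simply asserts that the assumption yields at least three primes of $\Of{L}$ above $\id{P}_c$ without addressing that ``order greater than $2$'' literally gives $h_K/n\leq h_K/3$ rather than $h_K/n\geq 3$; your reading (that the intended condition is $h_K/n\geq 3$) is the one that makes both proofs go through.
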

\begin{proof}
By assumption, 
\begin{align*}
 &\brac{\text{Nm}_{\QQ}^{L}\brac{\id{p}_{a }}\text{Nm}_{\QQ}^{L}\brac{\id{p}_{b}} \text{Nm}_{\QQ}^{L}\brac{\id{p}_{c}}^{2}\max \curly{\text{Nm}_{\QQ}^{L}\brac{\id{p}_{b}},\,\text{Nm}_{\QQ}^{L}\brac{\id{p}_{c}} }} \\
 &= \brac{\text{Nm}_{\QQ}^{L}\brac{\id{p}_{a }}\text{Nm}_{\QQ}^{L}\brac{\id{p}_{b}} \text{Nm}_{\QQ}^{L}\brac{\id{p}_{c}}^{3}}.   
\end{align*}

By the comments in the proof of the previous corollary, our assumption here gives us that there are at least 3 prime ideals of $\Of{L}$ dividing $c\Of{L}$ with the same norm, $\text{Nm}_{\QQ}^{L}\brac{\id{p}_{c}}$. It follows that
\[\brac{\text{Nm}_{\QQ}^{L}\brac{\id{p}_{a }}\text{Nm}_{\QQ}^{L}\brac{\id{p}_{b}} \text{Nm}_{\QQ}^{L}\brac{\id{p}_{c}}^{3}} \leq G,\]
and the claim follows.
\end{proof}

The following corollary holds regardless of the class field of $K$.

\begin{corollary}
Assume that $\textrm{Nm}_{\QQ}^{L}\brac{\id{p}_{b}} >\text{Nm}_{\QQ}^{L}\brac{\id{p}_{c}}$. Then
\[\log H_{L}(a,\,b,\,c) <  G^{\frac{2}{3} + \mathcal{C}_{71}\frac{\log \log \log G}{\log \log G}} .\]
\end{corollary}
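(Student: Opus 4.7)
The plan is to start from the conclusion of Theorem \ref{main theorem} and use the hypothesis $\text{Nm}_{\QQ}^{L}\brac{\id{p}_{b}} > \text{Nm}_{\QQ}^{L}\brac{\id{p}_{c}}$ to resolve the $\max$ appearing in the bound. Under this hypothesis,
\[
\max \curly{\text{Nm}_{\QQ}^{L}\brac{\id{p}_{b}},\,\text{Nm}_{\QQ}^{L}\brac{\id{p}_{c}} } = \text{Nm}_{\QQ}^{L}\brac{\id{p}_{b}},
\]
so the product of norms inside the parentheses of Theorem \ref{main theorem} collapses to $\text{Nm}_{\QQ}^{L}\brac{\id{p}_{a}}\text{Nm}_{\QQ}^{L}\brac{\id{p}_{b}}^{2}\text{Nm}_{\QQ}^{L}\brac{\id{p}_{c}}^{2}$.

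The key observation is that $\id{p}_{a},\,\id{p}_{b},\,\id{p}_{c}$ are three \emph{distinct} prime ideals of $\Of{L}$, all dividing $abc\Of{L}$; distinctness follows from the pairwise coprimeness of $a\Of{L},\,b\Of{L},\,c\Of{L}$. Hence by the definition of $G$ in \eqref{Definition of G},
\[
\text{Nm}_{\QQ}^{L}\brac{\id{p}_{a}}\text{Nm}_{\QQ}^{L}\brac{\id{p}_{b}}\text{Nm}_{\QQ}^{L}\brac{\id{p}_{c}} \leq G \quad \text{and} \quad \text{Nm}_{\QQ}^{L}\brac{\id{p}_{b}}\text{Nm}_{\QQ}^{L}\brac{\id{p}_{c}} \leq G.
\]
Multiplying these two inequalities gives
\[
\text{Nm}_{\QQ}^{L}\brac{\id{p}_{a}}\text{Nm}_{\QQ}^{L}\brac{\id{p}_{b}}^{2}\text{Nm}_{\QQ}^{L}\brac{\id{p}_{c}}^{2} \leq G^{2}.
\]

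Taking cube roots converts this into the factor $G^{2/3}$ on the right-hand side of Theorem \ref{main theorem}, and the remaining $G^{\mathcal{C}_{8}\frac{\log \log \log G}{\log \log G}}$ term is absorbed by enlarging the constant in the exponent to obtain $\mathcal{C}_{71}$. This is a short bookkeeping argument with no real obstacle: the only content beyond Theorem \ref{main theorem} is the pigeonhole-style observation that three distinct prime-ideal norms dividing $abc\Of{L}$ each contribute at most $G$ to any product.
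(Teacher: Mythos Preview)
Your proof is correct and is essentially identical to the paper's own argument: the paper also factors the product in Theorem~\ref{main theorem} as $\text{Nm}_{\QQ}^{L}\brac{\id{p}_{a}}\text{Nm}_{\QQ}^{L}\brac{\id{p}_{b}}\text{Nm}_{\QQ}^{L}\brac{\id{p}_{c}} \leq G$ times $\text{Nm}_{\QQ}^{L}\brac{\id{p}_{c}}\max\{\text{Nm}_{\QQ}^{L}\brac{\id{p}_{b}},\text{Nm}_{\QQ}^{L}\brac{\id{p}_{c}}\} = \text{Nm}_{\QQ}^{L}\brac{\id{p}_{b}}\text{Nm}_{\QQ}^{L}\brac{\id{p}_{c}} \leq G$, and concludes. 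One small caveat: your ``three distinct prime ideals'' remark tacitly assumes none of $a,b,c$ is a unit; under the paper's convention that $\id{p}_{a}=1$ with norm $1$ when $a$ is a unit, the inequality $\text{Nm}_{\QQ}^{L}\brac{\id{p}_{a}}\text{Nm}_{\QQ}^{L}\brac{\id{p}_{b}}\text{Nm}_{\QQ}^{L}\brac{\id{p}_{c}} \leq G$ still holds trivially, so this does not affect the argument.
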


\begin{proof}
 Note that $\text{Nm}_{\QQ}^{L}\brac{\id{p}_{a }}\text{Nm}_{\QQ}^{L}\brac{\id{p}_{b}} \text{Nm}_{\QQ}^{L}\brac{\id{p}_{c}} \leq G$. Further, by assumption
 \begin{align*}
     \text{Nm}_{\QQ}^{L}\brac{\id{p}_{c}}\max \curly{\text{Nm}_{\QQ}^{L}\brac{\id{p}_{b}},\,\text{Nm}_{\QQ}^{L}\brac{\id{p}_{c}} }&=\text{Nm}_{\QQ}^{L}\brac{\id{p}_{b}} \text{Nm}_{\QQ}^{L}\brac{\id{p}_{c}} \\
     &\leq G.
 \end{align*}
 Thus the corollary follows.
\end{proof}

\begin{corollary}
Assume that $\text{Nm}_{\QQ}^{L}\brac{\id{p}_{a}} > \text{Nm}_{\QQ}^{L}\brac{\id{p}_{b}} > \text{Nm}_{\QQ}^{L}\brac{\id{p}_{c}}.$ Then
\[\log H_{L}(a,\,b,\,c) <  G^{\frac{5}{9} + \mathcal{C}_{72}\frac{\log \log \log G}{\log \log G}}.\]

If $\max\curly{\text{Nm}_{\QQ}^{L}\brac{\id{p}_{b}},\,\text{Nm}_{\QQ}^{L}\brac{\id{p}_{c}}}=\text{Nm}_{\QQ}^{L}\brac{\id{p}_{c}}$ then we obtain that
\[\log H_{L}(a,\,b,\,c) <  G^{\frac{2}{3} + \mathcal{C}_{72}\frac{\log \log \log G}{\log \log G}}.\]
\end{corollary}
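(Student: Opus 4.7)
The plan is to apply Theorem~\ref{main theorem} directly and then control the product of prime norms under the assumed ordering. Since $a\Of{K},b\Of{K},c\Of{K}$ are pairwise coprime, the same holds for $a\Of{L},b\Of{L},c\Of{L}$, so $\id{p}_{a},\id{p}_{b},\id{p}_{c}$ are three distinct prime ideals dividing $\brac{abc}\Of{L}$. By the definition of $G$ in \eqref{Definition of G}, this gives the fundamental inequality
\[\text{Nm}_{\QQ}^{L}\brac{\id{p}_{a}}\text{Nm}_{\QQ}^{L}\brac{\id{p}_{b}}\text{Nm}_{\QQ}^{L}\brac{\id{p}_{c}}\leq G,\]
which will be used in both parts. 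Throughout the argument, I write $N_{a},N_{b},N_{c}$ for the three norms in question.

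For the first assertion, the hypothesis $N_{a}>N_{b}>N_{c}$ forces $\max\curly{N_{b},N_{c}}=N_{b}$, so the bracketed product appearing in Theorem~\ref{main theorem} simplifies to $N_{a}N_{b}^{2}N_{c}^{2}$. The key algebraic step is to observe that $N_{a}\geq N_{b}$ and $N_{a}\geq N_{c}$ imply $N_{a}^{2}\geq N_{b}N_{c}$, whence
\[\brac{N_{b}N_{c}}^{3}\leq \brac{N_{a}N_{b}N_{c}}^{2}\leq G^{2},\]
so that $N_{b}N_{c}\leq G^{2/3}$. Factoring as $N_{a}N_{b}^{2}N_{c}^{2}=\brac{N_{a}N_{b}N_{c}}\brac{N_{b}N_{c}}\leq G\cdot G^{2/3}=G^{5/3}$ and taking the cube root from Theorem~\ref{main theorem} produces the factor $G^{5/9}$; the remaining subexponential factor $G^{\mathcal{C}_{8}\frac{\log\log\log G}{\log\log G}}$ is absorbed into the stated error term by enlarging the constant, yielding the first bound.

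For the secondary clause, read with $\max\curly{N_{b},N_{c}}=N_{c}$ (so that $N_{a}$ remains the largest but $N_{c}$ dominates $N_{b}$), the norm-product appearing in Theorem~\ref{main theorem} becomes $N_{a}N_{b}N_{c}^{3}$. Writing this as $\brac{N_{a}N_{b}N_{c}}\cdot N_{c}^{2}$ and using $N_{c}^{2}\leq N_{a}N_{c}\leq G$ (again by distinctness of $\id{p}_{a}$ and $\id{p}_{c}$, which makes $N_{a}N_{c}$ a product of two distinct prime-ideal norms dividing $\brac{abc}\Of{L}$), one gets $N_{a}N_{b}N_{c}^{3}\leq G^{2}$, so the cube root yields the factor $G^{2/3}$, and Theorem~\ref{main theorem} again absorbs the remaining terms into the error exponent.

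The main obstacle is simply bookkeeping: recognising the correct weighting between the trivially bounded product $N_{a}N_{b}N_{c}\leq G$ and the leftover factor $\brac{N_{b}N_{c}}$ or $N_{c}^{2}$, using the dominance of $N_{a}$ to bound the latter. No new analytic input is needed beyond Theorem~\ref{main theorem} and the coprimality hypothesis on $a,b,c$.
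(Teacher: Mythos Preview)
Your proof is correct and follows essentially the same route as the paper: apply Theorem~\ref{main theorem} directly and bound the norm product by splitting off $N_{a}N_{b}N_{c}\leq G$ and controlling the leftover factor via the ordering hypothesis. Your treatment of the second clause is in fact more careful than the paper's, which tersely claims that the stated bounds ``give both parts'' --- you correctly make explicit the implicit reading that $N_{a}$ remains maximal in the second case, without which the bound $N_{c}^{2}\leq G$ would not follow.
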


\begin{proof}
By assumption, $\text{Nm}_{\QQ}^{L}\brac{\id{p}_{b}}\text{Nm}_{\QQ}^{L}\brac{\id{p}_{c}}\leq G^{\frac{2}{3}}$ and $ \text{Nm}_{\QQ}^{L}\brac{\id{p}_{b}}\leq G^{\frac{1}{2}},\,\text{Nm}_{\QQ}^{L}\brac{\id{p}_{c}} \leq G^{\frac{1}{3}} $. Applying this to Theorem 1 gives both parts of the corollary.
\end{proof}

\begin{remark*}
If we assume that none of $a,\,b,\,c$ are units of $\Of{K}$ then the only assumption we need to obtain the first inequality in the corollary above is that $\max \curly{\text{Nm}_{\QQ}^{L}\brac{\id{p}_{a}},\, \text{Nm}_{\QQ}^{L}\brac{\id{p}_{b}},\, \text{Nm}_{\QQ}^{L}\brac{\id{p}_{c}}} = \text{Nm}_{\QQ}^{L}\brac{\id{p}_{a}}$. This follows as then by assumption, $ \text{Nm}_{\QQ}^{L}\brac{\id{p}_{b}}\leq G^{\frac{1}{3}},\,\text{Nm}_{\QQ}^{L}\brac{\id{p}_{c}} \leq G^{\frac{1}{3}}$. The argument follows. 
\end{remark*}

We now present some corollaries that depend on the value of $\max \curly{\text{Nm}_{\QQ}^{L}\brac{\id{p}_{b}},\,\text{Nm}_{\QQ}^{L}\brac{\id{p}_{c}}}$.

\begin{corollary}
Assume that $\max \curly{\text{Nm}_{\QQ}^{L}\brac{\id{p}_{b}},\,\text{Nm}_{\QQ}^{L}\brac{\id{p}_{c}}} < G^{\alpha}$ with $0<\alpha \leq 1$. Then
\[\log H_{L}(a,\,b,\,c) <  G^{\frac{1+2\alpha}{3} + \mathcal{C}_{73}\frac{\log \log \log G}{\log \log G}}. \]
\end{corollary}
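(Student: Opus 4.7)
The plan is to apply Theorem~\ref{main theorem} directly, and then bound the quantity
\[
\text{Nm}_{\QQ}^{L}\brac{\id{p}_{a}}\text{Nm}_{\QQ}^{L}\brac{\id{p}_{b}} \text{Nm}_{\QQ}^{L}\brac{\id{p}_{c}}^{2}\max \curly{\text{Nm}_{\QQ}^{L}\brac{\id{p}_{b}},\,\text{Nm}_{\QQ}^{L}\brac{\id{p}_{c}} }
\]
by a power of $G$ using the hypothesis on the maximum of $\text{Nm}_{\QQ}^{L}\brac{\id{p}_{b}}$ and $\text{Nm}_{\QQ}^{L}\brac{\id{p}_{c}}$. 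The point is to factor the product as
\[
\brac{\text{Nm}_{\QQ}^{L}\brac{\id{p}_{a}}\text{Nm}_{\QQ}^{L}\brac{\id{p}_{b}}\text{Nm}_{\QQ}^{L}\brac{\id{p}_{c}}} \cdot \text{Nm}_{\QQ}^{L}\brac{\id{p}_{c}} \cdot \max \curly{\text{Nm}_{\QQ}^{L}\brac{\id{p}_{b}},\,\text{Nm}_{\QQ}^{L}\brac{\id{p}_{c}} }
\]
and handle each of the three factors separately.

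For the first factor, I will use pairwise coprimality of $a\Of{L},\, b\Of{L},\, c\Of{L}$, which ensures that $\id{p}_{a},\, \id{p}_{b},\, \id{p}_{c}$ are three distinct prime ideals of $\Of{L}$, each dividing $\brac{abc}\Of{L}$. By the definition of $G$ given in \eqref{Definition of G}, this yields
\[
\text{Nm}_{\QQ}^{L}\brac{\id{p}_{a}}\text{Nm}_{\QQ}^{L}\brac{\id{p}_{b}}\text{Nm}_{\QQ}^{L}\brac{\id{p}_{c}} \leq G.
\]
For the remaining two factors, the assumption $\max\curly{\text{Nm}_{\QQ}^{L}\brac{\id{p}_{b}},\,\text{Nm}_{\QQ}^{L}\brac{\id{p}_{c}}} < G^{\alpha}$ bounds both $\text{Nm}_{\QQ}^{L}\brac{\id{p}_{c}}$ and $\max\curly{\text{Nm}_{\QQ}^{L}\brac{\id{p}_{b}},\,\text{Nm}_{\QQ}^{L}\brac{\id{p}_{c}}}$ by $G^{\alpha}$ directly. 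Multiplying the three bounds gives
\[
\text{Nm}_{\QQ}^{L}\brac{\id{p}_{a}}\text{Nm}_{\QQ}^{L}\brac{\id{p}_{b}} \text{Nm}_{\QQ}^{L}\brac{\id{p}_{c}}^{2}\max \curly{\text{Nm}_{\QQ}^{L}\brac{\id{p}_{b}},\,\text{Nm}_{\QQ}^{L}\brac{\id{p}_{c}} } \leq G^{1+2\alpha}.
\]

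Substituting into the bound of Theorem~\ref{main theorem}, the cube root on the right-hand side becomes $G^{\frac{1+2\alpha}{3}}$, and multiplying by the residual factor $G^{\mathcal{C}_{8}\frac{\log\log\log G}{\log\log G}}$ yields the claimed inequality, after absorbing $\mathcal{C}_{8}$ into a new effectively computable constant $\mathcal{C}_{73}$. There is no genuine obstacle in this argument; the only subtlety is to verify that $\id{p}_{a},\,\id{p}_{b},\,\id{p}_{c}$ are genuinely distinct primes whose product of norms divides $G$, which is immediate from the coprimality set-up and the definition of $G$. The cases in which one or more of $a,\,b,\,c$ is a unit are harmless because the corresponding $\id{p}$ is then taken with norm $1$, so all bounds degenerate in our favour.
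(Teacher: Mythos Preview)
Your proof is correct and follows essentially the same approach as the paper: factor the five-term product in Theorem~\ref{main theorem} as $\brac{\text{Nm}_{\QQ}^{L}\brac{\id{p}_{a}}\text{Nm}_{\QQ}^{L}\brac{\id{p}_{b}}\text{Nm}_{\QQ}^{L}\brac{\id{p}_{c}}} \cdot \text{Nm}_{\QQ}^{L}\brac{\id{p}_{c}} \cdot \max\curly{\text{Nm}_{\QQ}^{L}\brac{\id{p}_{b}},\text{Nm}_{\QQ}^{L}\brac{\id{p}_{c}}}$, bound the first factor by $G$ and the remaining two by $G^{\alpha}$ each, and then take the cube root. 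Your added justification that $\id{p}_{a},\id{p}_{b},\id{p}_{c}$ are distinct (or have norm $1$ in the unit case) is a helpful clarification that the paper leaves implicit.
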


\begin{proof}
Again, $$\text{Nm}_{\QQ}^{L}\brac{\id{p}_{a }}\text{Nm}_{\QQ}^{L}\brac{\id{p}_{b}} \text{Nm}_{\QQ}^{L}\brac{\id{p}_{c}} \leq G.$$ By assumption, 
\[ \text{Nm}_{\QQ}^{L}\brac{\id{p}_{c}}\max \curly{\text{Nm}_{\QQ}^{L}\brac{\id{p}_{b}},\,\text{Nm}_{\QQ}^{L}\brac{\id{p}_{c}} } < G^{2 \alpha}. \]

Applying this to Theorem 1 gives the result.
\end{proof}

\begin{corollary}
Assume that $ \max \curly{\text{Nm}_{\QQ}^{L}\brac{\id{p}_{b}},\,\text{Nm}_{\QQ}^{L}\brac{\id{p}_{c}} } < \brac{\log H_{L} \brac{a,\,b,\,c}}^{\alpha}$ for $0<\alpha < \frac{2}{3}$. Then
\[\log H_{L}(a,\,b,\,c) <  G^{\frac{1}{3-2\alpha} + \mathcal{C}_{74}\frac{\log \log \log G}{\log \log G}}.\]
\end{corollary}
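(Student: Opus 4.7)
The plan is to apply Theorem~\ref{main theorem} and use the hypothesis to express the $\max$-term on the right-hand side as a power of $\log H_{L}(a,b,c)$ itself; the resulting inequality, of the shape $X < cX^{\beta}$ with $\beta<1$, can then be rearranged to isolate $\log H_{L}(a,b,c)$ in terms of $G$ alone. This is a short, purely algebraic argument leveraging the bound already proven.

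First, I would factor the four-factor product appearing in Theorem~\ref{main theorem} as
\[\text{Nm}_{\QQ}^{L}\brac{\id{p}_{a}} \text{Nm}_{\QQ}^{L}\brac{\id{p}_{b}} \text{Nm}_{\QQ}^{L}\brac{\id{p}_{c}}^{2} \max\curly{\text{Nm}_{\QQ}^{L}\brac{\id{p}_{b}},\,\text{Nm}_{\QQ}^{L}\brac{\id{p}_{c}}} = \brac{\text{Nm}_{\QQ}^{L}\brac{\id{p}_{a}} \text{Nm}_{\QQ}^{L}\brac{\id{p}_{b}} \text{Nm}_{\QQ}^{L}\brac{\id{p}_{c}}} \cdot \brac{\text{Nm}_{\QQ}^{L}\brac{\id{p}_{c}} \max\curly{\text{Nm}_{\QQ}^{L}\brac{\id{p}_{b}},\,\text{Nm}_{\QQ}^{L}\brac{\id{p}_{c}}}}.\]
The first bracket is at most $G$, since $\id{p}_{a},\,\id{p}_{b},\,\id{p}_{c}$ are distinct prime ideals dividing $abc\Of{L}$, so their norms already appear in the product defining $G$ (the trivial bound used repeatedly in the preceding corollaries). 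The hypothesis, together with $\text{Nm}_{\QQ}^{L}\brac{\id{p}_{c}} \leq \max\curly{\text{Nm}_{\QQ}^{L}\brac{\id{p}_{b}},\,\text{Nm}_{\QQ}^{L}\brac{\id{p}_{c}}} < \brac{\log H_{L}(a,b,c)}^{\alpha}$, bounds the second bracket by $\brac{\log H_{L}(a,b,c)}^{2\alpha}$.

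Substituting both estimates into Theorem~\ref{main theorem} yields
\[\log H_{L}(a,b,c) < G^{1/3}\brac{\log H_{L}(a,b,c)}^{2\alpha/3} G^{\mathcal{C}_{8}\frac{\log\log\log G}{\log\log G}}.\]
Since $\alpha<2/3$, the exponent $1-2\alpha/3 = (3-2\alpha)/3$ is strictly positive, so dividing by $\brac{\log H_{L}(a,b,c)}^{2\alpha/3}$ and then raising both sides to the power $3/(3-2\alpha)$ isolates $\log H_{L}(a,b,c)$ and produces the desired bound, with $\mathcal{C}_{74} = 3\mathcal{C}_{8}/(3-2\alpha)$.

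The only real observation needed is the correct pairing in the factorisation: grouping $\text{Nm}_{\QQ}^{L}\brac{\id{p}_{c}}$ together with the $\max$-term rather than absorbing it into the leading triple, so that the hypothesis directly controls that pair while the remaining three norms fit neatly into $G$. There is no substantial obstacle beyond this observation; the rest is routine algebra, and the restriction $\alpha<2/3$ plays the mild role of keeping the rearrangement exponent positive and the resulting bound meaningful (strictly better than $G^{3/5}$).
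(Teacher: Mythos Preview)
Your proof is correct and takes essentially the same approach as the paper: split the five-norm product into $\text{Nm}_{\QQ}^{L}\brac{\id{p}_{a}}\text{Nm}_{\QQ}^{L}\brac{\id{p}_{b}}\text{Nm}_{\QQ}^{L}\brac{\id{p}_{c}} \leq G$ and $\text{Nm}_{\QQ}^{L}\brac{\id{p}_{c}}\max\{\cdots\} < (\log H_{L})^{2\alpha}$, then rearrange. The only cosmetic difference is that the paper goes back to the intermediate inequality \eqref{all but pbpc sorted} (retaining the $\log\log H_{L}$ denominator and invoking a variant of Lemma~\ref{last lemma} at the end), whereas you start directly from Theorem~\ref{main theorem}, where that step has already been absorbed; your route is marginally cleaner but the argument is identical.
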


\begin{proof}
Consider \eqref{all but pbpc sorted}. Applying the assumption, we can rewrite this as
\[\brac{\frac{\log H_{L}(a,\,b,\,c)}{\log \log H_{L}(a,\,b,\,c)}}^{3} <   G \brac{\log H_{L}(a,\,b,\,c) }^{2\alpha} G^{\mathcal{C}_{75}\frac{\log \log \log G}{\log \log G}}.\]

Dividing through by $\brac{\log H_{L}(a,\,b,\,c) }^{2\alpha}$ we obtain that
\[\frac{\brac{\log H_{L}(a,\,b,\,c)}^{3-2\alpha} }{\brac{\log \log H_{L}(a,\,b,\,c)}^{3}}< G^{1+\mathcal{C}_{75}\frac{\log \log \log G}{\log \log G}}.\]

Taking the $3-2\alpha$'th root and applying a variant of Lemma \ref{last lemma} gives the result.
\end{proof}

\begin{corollary}
Assume that $\textrm{Nm}_{\QQ}^{L}\brac{\id{p}_{max}}< \brac{\log H_{L}\brac{a,\,b,\,c}}^{\alpha}$ for $0<\alpha < \frac{3}{5}$. Then
\begin{align}
    \log H_{L}\brac{a,\,b,\,c} &< G^{\frac{\mathcal{C}_{76}}{3-5\alpha}\frac{\log \log \log G}{\log \log G} } \nonumber \\
    &= G^{\mathcal{C}_{77}\frac{\log \log \log G}{\log \log G} }.
\end{align}
\end{corollary}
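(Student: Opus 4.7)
The plan is to apply the hypothesis on $\text{Nm}_{\QQ}^{L}(\id{p}_{\max})$ directly to the main inequality \eqref{all but pbpc sorted} obtained in the proof of Theorem~\ref{main theorem}. Since by definition $\id{p}_{\max}$ has the largest norm among all prime ideals of $\Of{L}$ dividing $abc\Of{L}$, in particular each of $\text{Nm}_{\QQ}^{L}(\id{p}_{a})$, $\text{Nm}_{\QQ}^{L}(\id{p}_{b})$, and $\text{Nm}_{\QQ}^{L}(\id{p}_{c})$ is at most $\text{Nm}_{\QQ}^{L}(\id{p}_{\max}) < \brac{\log H_{L}(a,b,c)}^{\alpha}$. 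Consequently, the full prime-ideal factor in \eqref{all but pbpc sorted}, namely
\[
\text{Nm}_{\QQ}^{L}\brac{\id{p}_{a}}\,\text{Nm}_{\QQ}^{L}\brac{\id{p}_{b}}\,\text{Nm}_{\QQ}^{L}\brac{\id{p}_{c}}^{2}\max\curly{\text{Nm}_{\QQ}^{L}\brac{\id{p}_{b}},\text{Nm}_{\QQ}^{L}\brac{\id{p}_{c}}},
\]
is bounded above by $\text{Nm}_{\QQ}^{L}(\id{p}_{\max})^{5} < \brac{\log H_{L}(a,b,c)}^{5\alpha}$.

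Substituting this into \eqref{all but pbpc sorted} produces
\[
\brac{\frac{\log H_{L}(a,b,c)}{\log\log H_{L}(a,b,c)}}^{3} < \brac{\log H_{L}(a,b,c)}^{5\alpha}\, G^{\mathcal{C}\frac{\log\log\log G}{\log\log G}}.
\]
Because the hypothesis forces $3-5\alpha > 0$, we may divide through by $\brac{\log H_{L}(a,b,c)}^{5\alpha}$ and rearrange to obtain
\[
\frac{\brac{\log H_{L}(a,b,c)}^{3-5\alpha}}{\brac{\log\log H_{L}(a,b,c)}^{3}} < G^{\mathcal{C}\frac{\log\log\log G}{\log\log G}}.
\]
Taking the $(3-5\alpha)$-th root then gives
\[
\frac{\log H_{L}(a,b,c)}{\brac{\log\log H_{L}(a,b,c)}^{3/(3-5\alpha)}} < G^{\frac{\mathcal{C}}{3-5\alpha}\frac{\log\log\log G}{\log\log G}},
\]
and applying a variant of Lemma~\ref{last lemma} (exactly as done at the end of the proof of the previous corollary) absorbs the $\log\log H_{L}$ term into the right-hand side, yielding the claimed bound.

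This argument is essentially parallel to the proof of the preceding corollary, except that bounding \emph{all} of $\id{p}_{a}, \id{p}_{b}, \id{p}_{c}$ by $\id{p}_{\max}$ removes the explicit factor of $G$ on the right-hand side entirely, leaving only the sub-exponential factor $G^{\mathcal{C}\log\log\log G/\log\log G}$. The only place that requires any care is verifying that the exponent $3-5\alpha$ remains positive, which is precisely the range $\alpha < 3/5$ assumed in the statement; without this, one cannot invert to extract $\log H_{L}(a,b,c)$ on the left. I do not anticipate any serious obstacle beyond this bookkeeping.
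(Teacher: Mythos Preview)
Your proposal is correct and follows essentially the same approach as the paper. The only cosmetic difference is that the paper applies the hypothesis directly to the statement of Theorem~\ref{main theorem} (where the cube root and Lemma~\ref{last lemma} have already been applied), obtaining $\log H_{L}(a,b,c) < \brac{\log H_{L}(a,b,c)}^{5\alpha/3}\,G^{\mathcal{C}\log\log\log G/\log\log G}$ and then taking the $(1-5\alpha/3)^{-1}$-th root, whereas you go back one step to \eqref{all but pbpc sorted} and handle the $\log\log H_{L}$ factor yourself; the underlying argument is identical.
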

\begin{remark*}
We note that we can write this in the following terms. For any given $\varepsilon > 0 $, given the assumptions of the theorem and corollary there is a computable number $\mathcal{C}_{78}$ such that
\[\log H_{L}\brac{a,\,b,\,c} < G^{\mathcal{C}_{78}\cdot \varepsilon}.\]
\end{remark*}
\begin{proof}
Consider Theorem 1. By assumption, 
\[\log H_{L}\brac{a,\,b,\,c} < \brac{\log H_{L}\brac{a,\,b,\,c}}^{\frac{5 \alpha}{3}} G^{\frac{\log \log \log G}{\log \log G}}.\]
\end{proof}

Dividing through, we obtain that
\[\brac{\log H_{L}\brac{a,\,b,\,c}}^{1-\frac{5 \alpha}{3}} < G^{\frac{\log \log \log G}{\log \log G}}.\]

Take the $\frac{1}{1-\frac{5 \alpha}{3}}$'th root and the result follows.

\section{Method Only Using two $S$-unit bounds}

Part of the difficulty in analysing cases in the previous section comes from the number of prime ideals on the right hand side of \eqref{new theorem equation}. If we only use two $S$-unit bounds then, while in general the bound is worse, it is easier to analyse for corollaries. We now prove Theorem 2, as stated in the introduction.

We follow the main text until \eqref{E}. We then do not use the $S$-unit bound obtained by letting $S$ be equal to the infinite places and finite places corresponding to the prime ideals of $\Of{L}$ dividing $bc \Of{L}$. Following the argument of the main text, we obtain \eqref{H< first bound} and \eqref{H< second bound}. Multiplying these together we obtain that
\begin{align}\label{multiplied together}
&\brac{\frac{\log H_{L}(a,\,b,\,c)}{\log \log H_{L}(a,\,b,\,c)}}^{2} < \mathcal{C}_{79}^{t+u+v} \brac{r+t+u+v}^{7} \brac{\log G}^{2} \brac{\text{Nm}_{\QQ}^{L}\brac{\id{p}_{b}} \text{Nm}_{\QQ}^{L}\brac{\id{p}_{c}}^{2}} \nonumber \\
&\brac{ \prod_{i=1}^{t}  \log \brac{\text{Nm}_{\QQ}^{K}\brac{\id{p}_{i}}} \cdot \prod_{j=1}^{u} \log \brac{\text{Nm}_{\QQ}^{K}\brac{\id{q}_{j}}} \cdot\prod_{k=1}^{v} \log \brac{\text{Nm}_{\QQ}^{K}\brac{\id{r}_{k}}}}^{2} \cdot \prod_{\substack{\id{p} \in \Of{L} \\ \id{p} \mid c\Of{L} \\ \id{p} \neq \id{p}_{c}}}\log \text{Nm}_{\QQ}^{L}\brac{\id{p}}.
\end{align}

From here we follow the arguments of the proof of the main theorem, obtaining
\begin{equation}\label{2 sunits second to last}
\brac{\frac{\log H_{L}(a,\,b,\,c)}{\log \log H_{L}(a,\,b,\,c)}}^{2} <   \brac{\text{Nm}_{\QQ}^{L}\brac{\id{p}_{b}} \text{Nm}_{\QQ}^{L}\brac{\id{p}_{c}}^{2}} G^{\mathcal{C}_{80}\brac{\frac{\log \log \log G}{\log \log G}+\frac{1}{\log \log G}+\frac{\log \log G}{\log G}}}.
\end{equation}

We take the square root of both sides, before applying Lemma \ref{last lemma}, obtaining
\begin{align}\label{theorem 1 equation}
\log H_{L}(a,\,b,\,c) &<  \brac{\text{Nm}_{\QQ}^{L}\brac{\id{p}_{b}} \text{Nm}_{\QQ}^{L}\brac{\id{p}_{c}}^{2}}^{\frac{1}{2}} G^{\mathcal{C}_{81}\brac{\frac{\log \log \log G}{\log \log G}+\frac{1}{\log \log G}+\frac{\log \log G}{\log G}}} \nonumber \\
 &= \text{Nm}_{\QQ}^{L}\brac{\id{p}_{b}}^{\frac{1}{2}} \text{Nm}_{\QQ}^{L}\brac{\id{p}_{c}} G^{\mathcal{C}_{81}\brac{\frac{\log \log \log G}{\log \log G}+\frac{1}{\log \log G}+\frac{\log \log G}{\log G}}}.
\end{align} 
Again, $\frac{\log \log \log G}{\log \log G}$ is the dominant term in the exponent of $G$.

From this point, there are many corollaries we can find, similarly to in the previous section. However, given that there are fewer prime ideal on the right hand side of \eqref{theorem 1 equation}, they are generally easier to prove. Further, Theorem 1 and Theorem 2 are independent, so if $\textrm{Nm}_{\QQ}^{L}\brac{\id{p}_{a}}$ is sufficiently large in comparison to $\textrm{Nm}_{\QQ}^{L}\brac{\id{p}_{b}}$ and $\textrm{Nm}_{\QQ}^{L}\brac{\id{p}_{c}}$, Theorem 2 could give a better bound.

\subsection{Corollaries Of Theorem \ref{thm 1}}

This corollary relies on the class group of $\Of{K}$.
\begin{corollary}\label{Corol 1}
Assume that the prime ideal $\id{r} \subset \Of{K}$ that $\id{p}_{c} \subset \Of{L}$ lies above does not generate the class group of $K$. Then there exits an effectively computable constant $\mathcal{C}_{82}$ such that

\begin{equation}
\log H_{L}(a,\,b,\,c) <   G^{\frac{1}{2} +\mathcal{C}_{82}\frac{\log \log \log G}{\log \log G}}.
\end{equation}

\end{corollary}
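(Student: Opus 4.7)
The plan is to apply Theorem~\ref{thm 1} directly and show that, under the stated hypothesis on $\id{p}_{c}$, the factor $\brac{\text{Nm}_{\QQ}^{L}\brac{\id{p}_{b}} \text{Nm}_{\QQ}^{L}\brac{\id{p}_{c}}^{2}}^{1/2}$ appearing on the right is bounded above by $G^{1/2}$. All of the genuine analytic work is already packaged inside Theorem~\ref{thm 1}; what remains is a short splitting argument in the Hilbert Class Field.

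First I would invoke the standard description of prime splitting in $L=HCF(K)$: because $L/K$ is an unramified abelian extension, every prime $\id{r}\subset\Of{K}$ splits in $\Of{L}$ into exactly $g=h_{K}/P$ distinct primes, each of ramification index $1$ and inertia degree $f=P$, where $P$ is the order of the ideal class $\squar{\id{r}}$ in the class group of $K$. In particular, all primes of $\Of{L}$ above $\id{r}$ have the same norm $\text{Nm}_{\QQ}^{K}\brac{\id{r}}^{P}=\text{Nm}_{\QQ}^{L}\brac{\id{p}_{c}}$. The hypothesis that $\id{r}$ does not generate the class group of $K$ says precisely that $P<h_{K}$, so $g\geq 2$, and hence there exists a prime $\id{p}_{c}'\neq \id{p}_{c}$ of $\Of{L}$ lying above $\id{r}$ with $\text{Nm}_{\QQ}^{L}\brac{\id{p}_{c}'}=\text{Nm}_{\QQ}^{L}\brac{\id{p}_{c}}$. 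Since $\id{r}\mid c\Of{K}$, both $\id{p}_{c}$ and $\id{p}_{c}'$ divide $c\Of{L}$.

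Next, the coprimality assumption $b\Of{L}+c\Of{L}=\Of{L}$ forces $\id{p}_{b}\notin\curly{\id{p}_{c},\id{p}_{c}'}$. Therefore the three prime ideals $\id{p}_{b}, \id{p}_{c}, \id{p}_{c}'$ are distinct divisors of $(abc)\Of{L}$, and by the definition of $G$ this yields
\[
\text{Nm}_{\QQ}^{L}\brac{\id{p}_{b}}\cdot \text{Nm}_{\QQ}^{L}\brac{\id{p}_{c}}^{2} \;\leq\; G.
\]
Substituting this into the statement of Theorem~\ref{thm 1} gives
\[
\log H_{L}(a,b,c) \;<\; G^{\frac{1}{2}+\mathcal{C}_{82}\frac{\log\log\log G}{\log\log G}}
\]
with $\mathcal{C}_{82}$ equal to the effectively computable constant $\mathcal{C}_{9}$ of Theorem~\ref{thm 1}.

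There is no real obstacle in this argument: the splitting statement in the Hilbert Class Field is classical, and the combinatorics of counting distinct prime divisors of $(abc)\Of{L}$ is immediate once coprimality is invoked. The only subtlety worth a brief remark is that one uses the equality $\text{Nm}_{\QQ}^{L}\brac{\id{p}_{c}'}=\text{Nm}_{\QQ}^{L}\brac{\id{p}_{c}}$, which relies on $L/K$ being Galois (so that all primes above $\id{r}$ share the same residue degree).
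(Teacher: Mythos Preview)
Your proposal is correct and follows essentially the same route as the paper: apply Theorem~\ref{thm 1} and use the splitting of $\id{r}$ in $L=HCF(K)$ to produce a second prime $\id{p}_{c}'$ of the same norm as $\id{p}_{c}$, whence $\text{Nm}_{\QQ}^{L}(\id{p}_{b})\,\text{Nm}_{\QQ}^{L}(\id{p}_{c})^{2}\leq G$. Your write-up is in fact slightly more careful than the paper's in two places: you state the group-theoretic implication correctly as $P<h_{K}\Rightarrow g=h_{K}/P\geq 2$ (the paper's phrasing ``the order of $[\id{r}]$ is at least $2$'' is a slip), and you make explicit the coprimality argument ensuring $\id{p}_{b}\notin\{\id{p}_{c},\id{p}_{c}'\}$.
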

\begin{proof}
Let $\id{r}$ be a prime ideal of $\Of{K}$ dividing $c\Of{K}$ such that $\id{p_{c}} \subset \Of{L}$ lies above $\id{r}$. Assume that $\id{r}$ does not generate the class group of $K$. Then in $L=HCF(K)$, $\id{r}$ splits into $h_{K}/P$ prime ideals, where $h_{K}$ is the class number of $K$ and $P$ is the order of $\squar{\id{r}}$ in $C_{K}$ \cite{childress2008class} \cite{neukirch2013algebraic}. As $\id{r}$ does not generate the class group of $K$, then the order of $\squar{\id{r}}$ is at least 2. As $\id{r}$ splits in $\Of{L}$, we know that all prime ideals lying above $\id{r}$ in $\Of{L}$ have the same norm. By assumption, we have at least two such ideals in $\Of{L}$, so $\text{Nm}_{\QQ}^{L}\brac{\id{p}_{b}}\brac{\text{Nm}_{\QQ}^{L}\brac{\id{p}_{c}}}^{2}<G$. More explicitly, there is another prime ideal $\id{p}_{c}'$ of $\Of{L}$ lying above $\id{r}$ such that $\text{Nm}_{\QQ}^{L}\brac{\id{p}_{c}}=\text{Nm}_{\QQ}^{L}\brac{\id{p}_{c}'} $. It then follows from Theorem \ref{thm 1} that
\begin{equation}
\log H_{L}(a,\,b,\,c) <   G^{\frac{1}{2} +\mathcal{C}_{82}\frac{\log \log \log G}{\log \log G}}.
\end{equation}
\end{proof}

The following corollaries give different bounds depending on $\id{p}_{max}$ or $\max \ordid{c}$.
\begin{corollary}\label{Corol 2}
Assume that $\text{Nm}_{\QQ}^{L}\brac{\id{p}_{c}}<G^{\alpha}$ with $0<\alpha <1$, or that $\max \ordid{c}<G^{\alpha}$ with $0<\alpha <1$. Then there exits an effectively computable constant $\mathcal{C}_{83}$ such that
\begin{equation}\label{A}
\log H_{L}(a,\,b,\,c) <   G^{\frac{1+\alpha}{2}+\mathcal{C}_{83}\frac{\log \log \log G}{\log \log G}}.
\end{equation}
Further, if $\max\curly{\text{Nm}_{\QQ}^{L}\brac{\id{p}_{b}},\,\text{Nm}_{\QQ}^{L}\brac{\id{p}_{c}}}<G^{\alpha}$ then
\begin{equation}\label{B}
\log H_{L}(a,\,b,\,c) <  G^{\frac{3 \alpha}{2} +\mathcal{C}_{84}\frac{\log \log \log G}{\log \log G}}.
\end{equation}
\end{corollary}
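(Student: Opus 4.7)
The plan is to apply Theorem~\ref{thm 1} in each case, estimating the factor $\bigl(\textrm{Nm}_{\QQ}^{L}(\id{p}_{b})\textrm{Nm}_{\QQ}^{L}(\id{p}_{c})^{2}\bigr)^{1/2}$ appearing in \eqref{theorem 1 equation} by a suitable power of $G$. The key observation I will repeatedly use is that $\id{p}_{b}$ and $\id{p}_{c}$ are distinct prime ideals of $\Of{L}$ (since $b\Of{L}$ and $c\Of{L}$ are coprime by hypothesis), so their norms appear as separate factors in the product defining $G$. In particular $\textrm{Nm}_{\QQ}^{L}(\id{p}_{b})\,\textrm{Nm}_{\QQ}^{L}(\id{p}_{c}) \le G$.

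For the bound \eqref{A} under the first hypothesis $\textrm{Nm}_{\QQ}^{L}(\id{p}_{c})<G^{\alpha}$: I will multiply the trivial bound $\textrm{Nm}_{\QQ}^{L}(\id{p}_{b})\,\textrm{Nm}_{\QQ}^{L}(\id{p}_{c})\le G$ by $\textrm{Nm}_{\QQ}^{L}(\id{p}_{c})<G^{\alpha}$ to obtain $\textrm{Nm}_{\QQ}^{L}(\id{p}_{b})\,\textrm{Nm}_{\QQ}^{L}(\id{p}_{c})^{2}< G^{1+\alpha}$. Taking the square root and substituting into Theorem~\ref{thm 1} gives the required exponent $\tfrac{1+\alpha}{2}$, with the $\frac{\log\log\log G}{\log\log G}$ term coming directly from Theorem~\ref{thm 1}. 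Under the alternative hypothesis $\max\ordid{c}<G^{\alpha}$, the plan is to bypass Theorem~\ref{thm 1} altogether and instead apply \eqref{H < ord c} directly: this gives $\log H_{L}(a,b,c)< \mathcal{C}_{33}\,G^{\alpha}\log G$. Using $\log G = G^{\log\log G/\log G}$ and absorbing the constant into a factor of the form $G^{1/\log\log G}$, this yields $\log H_{L}(a,b,c) < G^{\alpha+o(1)}$, which for $0<\alpha<1$ is strictly stronger than the claimed $G^{(1+\alpha)/2+o(1)}$ bound. Thus both hypotheses lead to \eqref{A}, in the second case with room to spare.

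For the bound \eqref{B} under $\max\{\textrm{Nm}_{\QQ}^{L}(\id{p}_{b}),\textrm{Nm}_{\QQ}^{L}(\id{p}_{c})\}<G^{\alpha}$: I will simply estimate each factor separately, $\textrm{Nm}_{\QQ}^{L}(\id{p}_{b})<G^{\alpha}$ and $\textrm{Nm}_{\QQ}^{L}(\id{p}_{c})^{2}<G^{2\alpha}$, yielding $\textrm{Nm}_{\QQ}^{L}(\id{p}_{b})\,\textrm{Nm}_{\QQ}^{L}(\id{p}_{c})^{2}<G^{3\alpha}$. Taking the square root and substituting into Theorem~\ref{thm 1} gives the exponent $\tfrac{3\alpha}{2}$.

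There is no real obstacle here — both parts reduce to elementary manipulations of the inequality supplied by Theorem~\ref{thm 1}, combined with the coprimality of the prime ideals. The only mild subtlety is recognising that the $\max\ordid{c}$ hypothesis in the first part is best handled by returning to the stronger intermediate inequality \eqref{H < ord c} rather than funnelling it through Theorem~\ref{thm 1}, which would lose information.
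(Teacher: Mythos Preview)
Your proposal is correct. For the first hypothesis $\textrm{Nm}_{\QQ}^{L}(\id{p}_{c})<G^{\alpha}$ and for the second bound~\eqref{B}, your argument is essentially identical to the paper's: combine $\textrm{Nm}_{\QQ}^{L}(\id{p}_{b})\,\textrm{Nm}_{\QQ}^{L}(\id{p}_{c})\le G$ with the assumed bound on $\textrm{Nm}_{\QQ}^{L}(\id{p}_{c})$ (respectively, bound both norms by $G^{\alpha}$), and feed the result into Theorem~\ref{thm 1}.

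For the alternative hypothesis $\max\ordid{c}<G^{\alpha}$, you take a shorter route than the paper. Both you and the paper start from~\eqref{H < ord c} to obtain $\log H_{L}(a,b,c)<\mathcal{C}\,(\log G)\,G^{\alpha}$. You then stop and observe that this is already $G^{\alpha+o(1)}$, which for $\alpha<1$ is stronger than the claimed $G^{(1+\alpha)/2+o(1)}$. The paper instead multiplies this inequality by~\eqref{H< second bound}, tidies the product, and takes a square root to land exactly on the exponent $\tfrac{1+\alpha}{2}$. Your shortcut is perfectly valid (indeed the paper itself remarks in passing that the direct bound is sharper, at least for $\alpha<\tfrac12$); what it buys is simplicity and in fact a better exponent under this particular hypothesis. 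What the paper's longer route buys is a uniform derivation that parallels the structure used elsewhere (and which becomes genuinely necessary in the analogous Corollary~\ref{Corol 3}, where the hypothesis involves $\log H_{L}$ rather than $G$).
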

\begin{remark*}
We note that $\frac{3 \alpha}{2} < 1$ for $\alpha < \frac{2}{3}$, and further that $\frac{3\alpha}{2}< \frac{1+\alpha}{2}$ for $\alpha < \frac{1}{2}$. Thus, our second bound is better than our first given in this corollary for $\alpha < \frac{1}{2}$.
\end{remark*}

\begin{proof}

 We first assume that $\text{Nm}_{\QQ}^{L}\brac{\id{p}_{c}} < G^{\alpha}$ where $\alpha \in (0,\,1)$. Thus 
 \[\text{Nm}_{\QQ}^{L}\brac{\id{p}_{\text{b}}}\text{Nm}_{\QQ}^{L}\brac{\id{p}_{\text{c}}}^{2} <G^{1+\alpha}<G^{2}.\] 
 Thus from Theorem \ref{thm 1}, we obtain that

\begin{equation}\label{Assumption 1 done}
\log H_{L}(a,\,b,\,c) <   G^{\frac{1+\alpha}{2}+\mathcal{C}_{83}\frac{\log \log \log G}{\log \log G}}.
\end{equation}

We now assume that  $\max \ordid{c}<G^{\alpha}$ for some $\alpha \in (0,\,1)$. Then, in place of \eqref{ordid(c) leq bound}, we have that for all $\id{p} \mid c\Of{L}$, $\ordid{c}< G^{\alpha}$. It follows from \eqref{H < ord c} that
\begin{equation}
\log H_{L}(a,\,b,\,c)< \mathcal{C}_{84}\brac{\log G} G^{\alpha}.
\end{equation}
We note that for $\alpha < \frac{1}{2}$, this bound is actually better than the bound that follows.

 As in the proof for the main theorem, \eqref{H< second bound} still holds. Multiplying the above and \eqref{H< second bound} we obtain that
\begin{align}\label{assumption 2.1 combined}
\frac{\brac{\log H_{L}(a,\,b,\,c)}^{2}}{\log \log H_{L}(a,\,b,\,c)}<&\mathcal{C}_{85}^{t+v}\brac{r+t+v+1}^{\frac{7}{2}}\brac{\log G}^{2} G^{\alpha} \text{Nm}_{\QQ}^{L}\brac{\id{p}_{b}}\cdot\text{Nm}_{\QQ}^{L}\brac{\id{p}_{c}} \nonumber \\ 
&\prod_{i=1}^{t} \log \brac{\text{Nm}_{\QQ}^{K}\brac{\id{p}_{i}}}\cdot \prod_{j=1}^{v} \log \brac{\text{Nm}_{\QQ}^{K}\brac{\id{r}_{j}}}\cdot \prod_{\substack{\id{p} \in \Of{L} \\ \id{p} \mid c\Of{L} \\ \id{p} \neq \id{p}_{c}}}\log \text{Nm}_{\QQ}^{L}\brac{\id{p}}.
\end{align}
 
We note that $\text{Nm}_{\QQ}^{L}\brac{\id{p}_{b}}\cdot\text{Nm}_{\QQ}^{L}\brac{\id{p}_{c}}\leq G$. Further, we can use the techniques from above to tidy terms in the same way as we did for the main theorem to obtain

\begin{equation}\label{assumption 2.1 almost done}
\frac{\brac{\log H_{L}(a,\,b,\,c)}^{2}}{\log \log H_{L}(a,\,b,\,c)} <G^{1+\alpha} G^{\mathcal{C}_{86}\frac{\log \log \log G}{\log \log G}}.
\end{equation}
Taking the square root and applying a variant of Lemma \ref{last lemma}, we obtain that

\begin{equation}\label{assumption 2.1 done}
\log H_{L}(a,\,b,\,c) < G^{\frac{1+\alpha}{2}+\mathcal{C}_{83}\frac{\log \log \log G}{\log \log G}}.
\end{equation}

This proves the first part of Corollary \ref{Corol 2}. The further comments follow directly from Theorem \ref{thm 1} when we bound $\text{Nm}_{\QQ}^{L} \brac{\id{p}_{b}}$ and $\text{Nm}_{\QQ}^{L} \brac{\id{p}_{c}}$ above by $G^{\alpha}$. This gives us that 
\begin{equation}
\log H_{L}(a,\,b,\,c) <  G^{\frac{3 \alpha}{2} +\mathcal{C}_{8}4\frac{\log \log \log G}{\log \log G}},
\end{equation}
as claimed, where $\frac{3 \alpha}{2} < 1$ for $\alpha < \frac{2}{3}$, and is a better bound than given above for $\alpha < \frac{1}{2}$.

It also follows directly from Theorem \ref{thm 1} that if $\max \curly{\text{Nm}_{\QQ}^{L}\brac{\id{p}_{\text{b}}},\, \text{Nm}_{\QQ}^{L}\brac{\id{p}_{\text{c}}}} < G^{\alpha}$, then
\[\log H_{L}(a,\,b,\,c) <  G^{\frac{3 \alpha}{2} +\mathcal{C}_{84}\frac{\log \log \log G}{\log \log G}}.\]
\end{proof}

\begin{corollary}\label{Corol 3}
Assume now that $\text{Nm}_{\QQ}^{L}\brac{\id{p}_{c}}<\brac{\log H_{L}(a,\,b,\,c)}^{\alpha}$ with $0<\alpha <1$, or that  \newline $\max \ordid{c}< \brac{\log H_{L}(a,\,b,\,c)}^{\alpha}$ with $0<\alpha <1$. Then there exits an effectively computable constant $\mathcal{C}_{87}$ such that
\begin{equation}
\log H_{L}(a,\,b,\,c)<G^{\frac{1}{2-\alpha}+\mathcal{C}_{87}\frac{\log \log \log G}{\log \log G}}.
\end{equation}
Furthermore, if $\max \curly{\text{Nm}_{\QQ}^{L}\brac{\id{p}_{b}},\,\text{Nm}_{\QQ}^{L}\brac{\id{p}_{c}}}<\brac{\log H_{L}(a,\,b,\,c)}^{\alpha}$ for $\alpha < \frac{2}{3}$, then directly from Theorem \ref{thm 1} we obtain that
\begin{align} \label{C}
\log H_{L}(a,\,b,\,c) &<  G^{\frac{\mathcal{C}_{88}}{2-3\alpha}\frac{\log \log \log G}{\log \log G}} \nonumber \\
&= G^{\mathcal{C}_{89}\frac{\log \log \log G}{\log \log G}}.
\end{align}
\end{corollary}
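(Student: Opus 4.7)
The strategy is to adapt the proof of Corollary~\ref{Corol 2}, but with $\brac{\log H_{L}\brac{a,b,c}}^{\alpha}$ in place of $G^{\alpha}$, treating the resulting inequalities as implicit bounds on $\log H_{L}\brac{a,b,c}$ to be solved at the end. Under the first hypothesis $\text{Nm}_{\QQ}^{L}\brac{\id{p}_{c}}<\brac{\log H_{L}\brac{a,b,c}}^{\alpha}$, I would substitute directly into Theorem~\ref{thm 1}. Using $\text{Nm}_{\QQ}^{L}\brac{\id{p}_{b}}\text{Nm}_{\QQ}^{L}\brac{\id{p}_{c}}\leq G$, the pre-factor rewrites as
\[
\text{Nm}_{\QQ}^{L}\brac{\id{p}_{b}}^{1/2}\text{Nm}_{\QQ}^{L}\brac{\id{p}_{c}} = \brac{\text{Nm}_{\QQ}^{L}\brac{\id{p}_{b}}\text{Nm}_{\QQ}^{L}\brac{\id{p}_{c}}\cdot \text{Nm}_{\QQ}^{L}\brac{\id{p}_{c}}}^{1/2}\leq G^{1/2}\brac{\log H_{L}\brac{a,b,c}}^{\alpha/2}.
\]
Substituting and collecting the $\log H_{L}\brac{a,b,c}$ factors onto the left yields $\brac{\log H_{L}\brac{a,b,c}}^{(2-\alpha)/2} < G^{1/2+\mathcal{C}\frac{\log\log\log G}{\log\log G}}$, and taking the $\frac{2}{2-\alpha}$-th power produces the claimed exponent $\frac{1}{2-\alpha}$.

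For the second hypothesis $\max\ordid{c}<\brac{\log H_{L}\brac{a,b,c}}^{\alpha}$, I would reuse the multiplication trick from the proof of Corollary~\ref{Corol 2}, with our hypothesis replacing $\max\ordid{c}<G^{\alpha}$. Inequality \eqref{H < ord c} combined with the assumption gives $\log H_{L}\brac{a,b,c} < \mathcal{C}_{33}\brac{\log G}\brac{\log H_{L}\brac{a,b,c}}^{\alpha}$. Multiplying this with \eqref{H< second bound}, using $\text{Nm}_{\QQ}^{L}\brac{\id{p}_{b}}\text{Nm}_{\QQ}^{L}\brac{\id{p}_{c}}\leq G$, absorbing the logarithmic products over prime ideals via \eqref{Final relation between G and product of norms} and \eqref{G and log primes from sunit}, and exploiting $\brac{\log G}^{2}=G^{2\log\log G/\log G}=G^{o(1)}$ to swallow the polynomial-in-$\log G$ prefactors, I obtain
\[
\frac{\brac{\log H_{L}\brac{a,b,c}}^{2-\alpha}}{\log\log H_{L}\brac{a,b,c}} < G^{1+\mathcal{C}\frac{\log\log\log G}{\log\log G}}.
\]
Extracting the $(2-\alpha)$-th root and applying a straightforward variant of Lemma~\ref{last lemma} to clear the residual $\log\log H_{L}\brac{a,b,c}$ factor gives the stated uniform bound $G^{\frac{1}{2-\alpha}+\mathcal{C}_{87}\frac{\log\log\log G}{\log\log G}}$.

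The final \emph{furthermore} part is the cleanest. Under $\max\curly{\text{Nm}_{\QQ}^{L}\brac{\id{p}_{b}},\,\text{Nm}_{\QQ}^{L}\brac{\id{p}_{c}}}<\brac{\log H_{L}\brac{a,b,c}}^{\alpha}$, Theorem~\ref{thm 1} bounds $\text{Nm}_{\QQ}^{L}\brac{\id{p}_{b}}^{1/2}\text{Nm}_{\QQ}^{L}\brac{\id{p}_{c}}<\brac{\log H_{L}\brac{a,b,c}}^{3\alpha/2}$ immediately, so substituting and rearranging produces $\brac{\log H_{L}\brac{a,b,c}}^{1-3\alpha/2}<G^{\mathcal{C}\frac{\log\log\log G}{\log\log G}}$; the hypothesis $\alpha<2/3$ ensures the exponent $1-3\alpha/2$ is strictly positive, so taking the appropriate root yields $G^{\frac{\mathcal{C}_{88}}{2-3\alpha}\frac{\log\log\log G}{\log\log G}}$. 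The main obstacle throughout will be the implicit nature of the inequalities, with $\log H_{L}\brac{a,b,c}$ appearing on both sides: one must carefully apply Lemma~\ref{last lemma} (or obvious variants adjusted for exponents $(2-\alpha)/2$ and $(2-\alpha)$ and for the residual $\log\log H_{L}\brac{a,b,c}$ factor) so that all lower-order contributions are genuinely absorbed into the $\frac{\log\log\log G}{\log\log G}$ error term and do not inflate the leading exponent $\frac{1}{2-\alpha}$.
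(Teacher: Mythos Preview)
Your proposal is correct and follows essentially the same route as the paper's own proof. For the first hypothesis the paper starts from the pre--square-root inequality \eqref{2 sunits second to last} rather than from Theorem~\ref{thm 1} itself, which leaves an extra $(\log\log H_L(a,b,c))^2$ in the denominator before taking the $(2-\alpha)$-th root; your use of Theorem~\ref{thm 1} directly is a harmless cosmetic variant that avoids that factor altogether. For the second hypothesis and for the \emph{furthermore} clause your argument matches the paper's line for line.
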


This is the best bound we achieve in this text.

\begin{remark*}
We note that the second inequality in this corollary gives a sub-exponential bound, an improvement on the bounds given in \cite{StewartYu}\cite{stewart2001abc}. 

To more easily compare with existing results, we note we can slightly weaken this upper bound.  Inequality \eqref{C} implies that given any $\varepsilon > 0$ there exists some computable $\mathcal{C}_{90}$ such that
\[\log H_{L} \brac{a,\,b,\,c} < G^{\mathcal{C}_{90} \cdot \epsilon},\]
where importantly $\mathcal{C}_{90}$ does not depend on $\epsilon$. 

\end{remark*}
\begin{proof}

We first assume that $\text{Nm}_{\QQ}^{L}\brac{\id{p}_{c}}<\brac{\log H_{L}\brac{a,\,b,\,c}}^{\alpha}$ with $\alpha \in (0,1)$. Then from this assumption and \eqref{2 sunits second to last}, we obtain that
\begin{equation}\label{assumption 2 before square root}
\frac{\brac{\log H_{L}(a,\,b,\,c)}^{2-\alpha}}{\brac{\log \log H_{L}(a,\,b,\,c)}^{2}}<G\cdot G^{\mathcal{C}_{91}\frac{\log \log \log G}{\log \log G}}.
\end{equation}
By assumption, $2-\alpha>1$, and we take this root to obtain that
\begin{equation}\label{assumption 2 after root}
\frac{\log H_{L}(a,\,b,\,c)}{\brac{\log \log H_{L}(a,\,b,\,c)}^{\frac{2}{2-\alpha}}}<G^{\frac{1}{2-\alpha}+\mathcal{C}_{92}\frac{\log \log \log G}{\log \log G}}.
\end{equation}
Note that $1>\frac{1}{2-\alpha}>\frac{1}{2}$.
Applying a variant of Lemma \ref{last lemma}, we obtain that
\begin{equation}\label{assumption 2 done}
\log H_{L}(a,\,b,\,c)<G^{\frac{1}{2-\alpha}+\mathcal{C}_{93}\frac{\log \log \log G}{\log \log G}}.
\end{equation}

Now instead assume that $\max \ordid{c}<\brac{\log H_{L}(a,\,b,\,c}^{\alpha}$ for some $\alpha \in (0,\,1)$. Then as above, from \eqref{H < ord c} we obtain that
\begin{equation}
\log H_{L}(a,\,b,\,c)< \mathcal{C}_{94} \log G \brac{\log H_{L}(a,\,b,\,c)}^{\alpha}.
\end{equation}
It immediately follows that
\begin{equation}
\brac{\log H_{L}(a,\,b,\,c)}^{1-\alpha}< \mathcal{C}_{94} \log G.
\end{equation}

Again, we still have \eqref{H< second bound}, as obtained by following the main argument. We multiply \eqref{H< second bound} by the above to obtain
\begin{align}
\frac{\brac{\log H_{L}(a,\,b,\,c)}^{2-\alpha}}{\log \log H_{L}(a,\,b,\,c)}<&\mathcal{C}_{95}^{t+v}\brac{r+t+v+1}^{\frac{7}{2}}\brac{\log G}^{2} \text{Nm}_{\QQ}^{L}\brac{\id{p}_{b}}\cdot\text{Nm}_{\QQ}^{L}\brac{\id{p}_{c}} \nonumber \\ 
&\prod_{i=1}^{t} \log \brac{\text{Nm}_{\QQ}^{K}\brac{\id{p}_{i}}}\cdot \prod_{j=1}^{v} \log \brac{\text{Nm}_{\QQ}^{K}\brac{\id{r}_{j}}}\cdot \prod_{\substack{\id{p} \in \Of{L} \\ \id{p} \mid c\Of{L} \\ \id{p} \neq \id{p}_{c}}}\log \text{Nm}_{\QQ}^{L}\brac{\id{p}}.
\end{align}

As before, we can use the same method of tidying as in the proof of the main theorem to show that

\begin{equation}
\frac{\brac{\log H_{L}(a,\,b,\,c)}^{2-\alpha}}{\log \log H_{L}(a,\,b,\,c)}<G^{1+\mathcal{C}_{96}\frac{\log \log \log G}{\log \log G}}.
\end{equation}

Taking the $2-\alpha$'th root and applying a variant of Lemma \ref{last lemma}, we obtain that
\begin{equation}
\log H_{L}(a,\,b,\,c)<G^{\frac{1}{2-\alpha}+\mathcal{C}_{97}\frac{\log \log \log G}{\log \log G}}.
\end{equation}

This concludes the proof of the first part of Corollary \ref{Corol 3}. 

To see the strongest case, we appeal directly to Theorem \ref{thm 1}. Assume that 
\[\text{Nm} \brac{\id{p}_{\max}}< \log \brac{H_{L}(a,\,b,\,c)}^{\alpha}\]
with $\alpha < \frac{2}{3}$. Then we can bound $\text{Nm}_{\QQ}^{L} \brac{\id{p}_{b}}$ and $\text{Nm}_{\QQ}^{L} \brac{\id{p}_{c}}$ above by $\log \brac{H_{L}(a,\,b,\,c)}^{\alpha}$, obtaining 
\begin{equation}
\log H_{L}(a,\,b,\,c) <  \brac{\log H_{L}(a,\,b,\,c)}^{\frac{3\alpha}{2}} G^{\mathcal{C}_{98}\brac{\frac{\log \log \log G}{\log \log G}+\frac{1}{\log \log G}+\frac{\log \log G}{\log G}}}.
\end{equation}
It then follows that
\begin{equation}
\brac{\log H_{L}(a,\,b,\,c)}^{1-\frac{3\alpha}{2}}< G^{\mathcal{C}_{98}\brac{\frac{\log \log \log G}{\log \log G}+\frac{1}{\log \log G}+\frac{\log \log G}{\log G}}}.
\end{equation}
Taking the $\frac{1}{1-\frac{3\alpha}{2}}$th root gives us the result, namely that
\begin{equation}
\log H_{L}(a,\,b,\,c) <  G^{\frac{\mathcal{C}_{98}}{2-3\alpha}\brac{\frac{\log \log \log G}{\log \log G}+\frac{1}{\log \log G}+\frac{\log \log G}{\log G}}}.
\end{equation}

Finally, we recall that the dominant term in the exponent is $\frac{\log \log \log G}{\log \log G}$, so we obtain that
\begin{equation}
    \log H_{L}(a,\,b,\,c) <  G^{\frac{\mathcal{C}_{88}}{2-3\alpha}\brac{\frac{\log \log \log G}{\log \log G}}}.
\end{equation}

As commented in the statement of the theorem, this is of the form 
\[\log  H_{L}(a,\,b,\,c) < G^{\mathcal{C}_{90}\cdot \epsilon}.\]
\end{proof}

\begin{remark*}
While the assumptions are hard to compare due to their different natures, we can see that for all $\alpha \in (0,\,1)$, $\frac{1+\alpha}{2}\geq \frac{1}{2-\alpha}$. Thus, generally speaking, the bound of Corollary \ref{Corol 3} is better than that of Corollary \ref{Corol 2}. More concretely, given $(a,\,b,\,c)$ that satisfy the assumptions of both Corollary \ref{Corol 2} and \ref{Corol 3}, Corollary \ref{Corol 3} gives a better bound in terms of the radical $G$ than that of Corollary \ref{Corol 2}.
\end{remark*}

\begin{corollary}\label{corol 4}
 Assume that $\text{Nm}_{\QQ}^{L} \brac{\id{p}_{\max}} >G^{\alpha}$ for $\alpha >\frac{1}{3}$, and that $\id{p}_{a}=\id{p}_{\max}$. Then
 
  \begin{equation}
 \log H_{L}(a,\,b,\,c) <   G^{\frac{3-3\alpha}{2}+\mathcal{C}_{99}\frac{\log \log \log G}{\log \log G}}.
 \end{equation}
 If $\text{Nm}_{\QQ}^{L} \brac{\id{p}_{\max}} \leq G^{\frac{1}{3}}$ it follows directly from Theorem \ref{thm 1} that 
 \begin{equation}
\log H_{L}(a,\,b,\,c) <  G^{\frac{1}{2} +\mathcal{C}_{100}\frac{\log \log \log G}{\log \log G}}.
\end{equation}
\end{corollary}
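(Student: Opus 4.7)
The plan is to deduce both parts as direct consequences of Theorem \ref{thm 1} combined with two elementary observations about the norms of the prime ideals $\id{p}_a$, $\id{p}_b$, $\id{p}_c$. The key ingredient is that, because $a\Of{L}$, $b\Of{L}$, $c\Of{L}$ are pairwise coprime, the ideals $\id{p}_a$, $\id{p}_b$, $\id{p}_c$ are three distinct prime divisors of $abc\,\Of{L}$ (with the convention that a trivial factor has norm $1$), so by the definition \eqref{Definition of G} of $G$ we have the product bound
\[
\text{Nm}_{\QQ}^{L}\brac{\id{p}_{a}}\,\text{Nm}_{\QQ}^{L}\brac{\id{p}_{b}}\,\text{Nm}_{\QQ}^{L}\brac{\id{p}_{c}}\;\leq\;G.
\]

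First I would handle the main case. Under the hypothesis $\id{p}_a=\id{p}_{\max}$ with $\text{Nm}_{\QQ}^{L}(\id{p}_a)>G^{\alpha}$, the above product bound forces both
\[
\text{Nm}_{\QQ}^{L}\brac{\id{p}_{b}}\;\leq\;\frac{G}{\text{Nm}_{\QQ}^{L}\brac{\id{p}_{a}}}\;<\;G^{1-\alpha},\qquad \text{Nm}_{\QQ}^{L}\brac{\id{p}_{c}}\;<\;G^{1-\alpha}.
\]
Plugging these two estimates into the conclusion of Theorem \ref{thm 1} yields
\[
\log H_{L}(a,\,b,\,c)\;<\;\bigl(G^{1-\alpha}\cdot G^{2(1-\alpha)}\bigr)^{1/2}\,G^{\mathcal{C}_{9}\frac{\log\log\log G}{\log\log G}}\;=\;G^{\frac{3-3\alpha}{2}+\mathcal{C}_{99}\frac{\log\log\log G}{\log\log G}},
\]
which is exactly the claimed inequality. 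The assumption $\alpha>1/3$ is what makes this bound nontrivial (i.e.\ the exponent is strictly below $1$).

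For the second part, if instead $\text{Nm}_{\QQ}^{L}(\id{p}_{\max})\leq G^{1/3}$, then in particular $\text{Nm}_{\QQ}^{L}(\id{p}_b)\leq G^{1/3}$ and $\text{Nm}_{\QQ}^{L}(\id{p}_c)\leq G^{1/3}$, so that $\text{Nm}_{\QQ}^{L}(\id{p}_b)\,\text{Nm}_{\QQ}^{L}(\id{p}_c)^{2}\leq G$. Substituting directly into Theorem \ref{thm 1} gives
\[
\log H_{L}(a,\,b,\,c)\;<\;G^{\frac{1}{2}+\mathcal{C}_{100}\frac{\log\log\log G}{\log\log G}}.
\]
There is no real obstacle here: once one recognises that the coprimeness makes $\id{p}_a,\id{p}_b,\id{p}_c$ contribute independently to $G$, the two bounds are immediate consequences of Theorem \ref{thm 1}, and the constants $\mathcal{C}_{99},\mathcal{C}_{100}$ can be taken to be $\mathcal{C}_{9}$ from that theorem.
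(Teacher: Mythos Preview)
Your proposal is correct and follows essentially the same route as the paper: both arguments use the coprimeness of $\id{p}_a,\id{p}_b,\id{p}_c$ together with $\id{p}_a=\id{p}_{\max}$ and $\text{Nm}_{\QQ}^{L}(\id{p}_a)>G^{\alpha}$ to force $\text{Nm}_{\QQ}^{L}(\id{p}_b),\,\text{Nm}_{\QQ}^{L}(\id{p}_c)<G^{1-\alpha}$, hence $\text{Nm}_{\QQ}^{L}(\id{p}_b)\,\text{Nm}_{\QQ}^{L}(\id{p}_c)^{2}<G^{3(1-\alpha)}$, and then plug into Theorem~\ref{thm 1}; the second part is likewise identical.
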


\begin{remark*}
 Note we have the assumption that $\alpha > \frac{1}{3}$ in order to make sure that $\frac{3-3\alpha}{2}<1$.
 
 Further, the second inequality given is the same case as $\alpha = \frac{1}{3}$ in the last part of Corollary \ref{Corol 2}.
\end{remark*}

\begin{proof}
Assume that $\text{Nm}_{\QQ}^{L} \brac{\id{p}_{\max}} >G^{\alpha}$, and assume that $\id{p}_{a}=\id{p}_{\max}$. Then considering \eqref{theorem 1 equation}, we note that \[\text{Nm}_{\QQ}^{L}\brac{\id{p}_{b}} \text{Nm}_{\QQ}^{L} \brac{\id{p}_{c}}^{2} <\brac{G^{1-\alpha}}^{3},\]
so 
\[\brac{\text{Nm}_{\QQ}^{L}\brac{\id{p}_{b}} \text{Nm}_{\QQ}^{L} \brac{\id{p}_{c}}^{2}}^{\frac{1}{2}} < G^{\frac{3-3\alpha}{2}}.\]
It follows that
 \begin{equation}\label{Corollary 4 equation}
 \log H_{L}(a,\,b,\,c) <   G^{\frac{3-3\alpha}{2}+\mathcal{C}_{99}\frac{\log \log \log G}{\log \log G}}.
 \end{equation}
 
 We note that $\frac{3-3\alpha}{2}<1$ for $\alpha> \frac{1}{3}$.
\end{proof}

\begin{corollary}\label{corol 5}
Assume $\text{ord}_{\id{p}_{c}}c < G^{\alpha}$ for $0<\alpha \leq 1$. Then
\begin{equation}
 \log H_{L}\brac{a,\,b,\,c} < G^{\max \curly{\alpha,\, \frac{3}{4}} + \mathcal{C}_{101}\frac{\log \log \log G}{\log \log G}}.
 \end{equation}

\end{corollary}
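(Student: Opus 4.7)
The plan is to split on whether $\textrm{Nm}_{\QQ}^{L}\brac{\id{p}_{c}}\le G^{1/2}$ or not: in the former case the claim follows from Theorem~\ref{thm 1} without even invoking the hypothesis on $\textrm{ord}_{\id{p}_c}(c)$, while in the latter case the size constraint on $\id{p}_c$ restricts the other prime factors enough for the hypothesis to give the desired bound.

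First, suppose $\textrm{Nm}_{\QQ}^{L}\brac{\id{p}_{c}}\le G^{1/2}$. Using the coprimality bound $\textrm{Nm}_{\QQ}^{L}\brac{\id{p}_{b}}\textrm{Nm}_{\QQ}^{L}\brac{\id{p}_{c}}\le G$, Theorem~\ref{thm 1} gives
\[\brac{\textrm{Nm}_{\QQ}^{L}\brac{\id{p}_{b}}\textrm{Nm}_{\QQ}^{L}\brac{\id{p}_{c}}^{2}}^{1/2}\le\brac{G\cdot\textrm{Nm}_{\QQ}^{L}\brac{\id{p}_{c}}}^{1/2}\le G^{3/4},\]
so $\log H_{L}(a,b,c)<G^{3/4+\mathcal{C}\log\log\log G/\log\log G}$, which is the baseline bound needed when $\alpha\le 3/4$.

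Now suppose $\textrm{Nm}_{\QQ}^{L}\brac{\id{p}_{c}}>G^{1/2}$. Then every other prime $\id{q}\mid c\Of{L}$ satisfies $\textrm{Nm}_{\QQ}^{L}\brac{\id{q}}<G^{1/2}$ and $\sum_{\id{q}\ne\id{p}_c,\,\id{q}\mid c\Of{L}}\log\textrm{Nm}_{\QQ}^{L}\brac{\id{q}}<\tfrac{1}{2}\log G$. I would use the chain $\log H_{L}(a,b,c)\le\mathcal{C}\, h(c')\le\mathcal{C}\log\textrm{Nm}_{\QQ}^{L}\brac{c\Of{L}}=\mathcal{C}\sum_{\id{p}\mid c\Of{L}}\textrm{ord}_{\id{p}}(c)\log\textrm{Nm}_{\QQ}^{L}\brac{\id{p}}$ coming from \eqref{D}, \eqref{relation between H and h} and \eqref{height log comparison}, and split the sum at $\id{p}_c$. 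By hypothesis the $\id{p}_c$-term is at most $G^{\alpha}\log G$; the remaining terms are bounded by $M'\cdot\tfrac{1}{2}\log G$, where $M'=\max_{\id{q}\ne\id{p}_c,\,\id{q}\mid c\Of{L}}\textrm{ord}_{\id{q}}(c)$.

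To bound $M'$, the key step is to reapply Lemma~\ref{Yu Varieties Bound} to $\textrm{ord}_{\id{q}^{*}}(c)$ for $\id{q}^{*}$ achieving $M'$, running through the same manipulations \eqref{ordid c 1}--\eqref{ordid(c) leq bound} as in the main theorem but with $\id{q}^{*}$ in place of $\id{p}_{c}$. Since $\textrm{Nm}_{\QQ}^{L}\brac{\id{q}^{*}}<G^{1/2}$, and the product-of-log-norm factors collapse under \eqref{Final relation between G and product of norms} and Lemma~\ref{primeidealthm}, this yields $M'<G^{1/2+\mathcal{C}\log\log\log G/\log\log G}\log\log H_{L}(a,b,c)$. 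Combining the two summands and applying Lemma~\ref{last lemma} to absorb the $\log\log H_{L}$ factor produces $\log H_{L}(a,b,c)<G^{\max(\alpha,1/2)+\mathcal{C}\log\log\log G/\log\log G}$ in this second case. Taking the worse of the two case bounds yields the claimed inequality $\log H_{L}(a,b,c)<G^{\max(\alpha,3/4)+\mathcal{C}_{101}\log\log\log G/\log\log G}$. The main obstacle is the bookkeeping in the second case: verifying that Yu's Lemma applied to $\id{q}^{*}$ rather than $\id{p}_{c}$ introduces only the same $G^{\mathcal{C}\log\log\log G/\log\log G}$ error factor, and checking that the $\log\log H_{L}$ that Yu's bound carries along can be absorbed via Lemma~\ref{last lemma} without spoiling the exponent $\max(\alpha,3/4)$.
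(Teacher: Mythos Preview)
Your proof is correct and follows the same overall architecture as the paper's: split on whether the largest relevant prime norm exceeds $G^{1/2}$, handle the small case via Theorem~\ref{thm 1} (the paper invokes Corollary~\ref{Corol 2}, which amounts to the same thing), and in the large case apply Yu's Lemma~\ref{Yu Varieties Bound} to a prime of norm $<G^{1/2}$.

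There are two minor differences worth noting. First, you split on $\text{Nm}_{\QQ}^{L}(\id{p}_c)$ while the paper splits on $\text{Nm}_{\QQ}^{L}(\id{p}_{\max})$; both work. Second, and more interestingly, in Case~2 you bound $\log H_L$ directly via $h(c')\le \mathcal{C}\log\text{Nm}_{\QQ}^{L}(c\Of{L})$ and split the sum $\sum_{\id{p}\mid c}\text{ord}_{\id{p}}(c)\log\text{Nm}_{\QQ}^{L}(\id{p})$ at $\id{p}_c$, whereas the paper passes through the cruder inequality $h(c')\le \mathcal{C}(\max_{\id{p}}\text{ord}_{\id{p}}(c))\log G$ from Lemma~\ref{relation between ord and h} and then \emph{multiplies} the resulting Yu bound against the second $S$-unit inequality~\eqref{H< second bound} before taking a square root. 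Your route is more elementary here --- it avoids~\eqref{H< second bound} altogether --- and actually gives the sharper exponent $\max(\alpha,\tfrac{1}{2})$ in Case~2, versus the paper's $\max(\alpha,\tfrac{3}{4})$. This gain is washed out by Case~1, so the final statement is the same, but your argument shows that the $\tfrac{3}{4}$ in the corollary is entirely an artefact of the small-$\id{p}_c$ case.
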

\begin{proof}
Assume that $\text{ord}_{\id{p}_{c}} c <G^{\alpha}$. 
 
Note that we can write 
\[\max_{\id{p} \mid \prin{c}_{L}} \ordid{c} = \max\curly{\max_{\substack{\id{p} \mid c\Of{L} \\ \id{p} \neq \id{p}_{c}}} \ordid{c},\, \text{ord}_{\id{p}_{c}}\brac{c}}.\]

By assumption, we attain the bound
\begin{equation}\label{Corol 5 initial bound}
\max_{\id{p} \mid \prin{c}_{L}} \ordid{c} = \max\curly{\max_{\substack{\id{p} \mid c\Of{L} \\ \id{p} \neq \id{p}_{c}}} \ordid{c},\, G^{\alpha}}.
\end{equation}

From the above and \eqref{H < ord c}, it follows directly that

\begin{equation}\label{Corol 5 ready to go}
\log H_{L}\brac{a,\,b,\,c} < \mathcal{C}_{102} \log G \max\curly{\max_{\substack{\id{p} \mid c\Of{L} \\ \id{p} \neq \id{p}_{c}}} \ordid{c},\, G^{\alpha}}.
\end{equation}

We now consider cases depending on $\text{Nm}_{\QQ}^{L} \brac{\id{p}_{c}}$.

First assume that $\text{Nm}_{\QQ}^{L} \brac{\id{p}_{\max}}<G^{\frac{1}{2}}$. Then we can directly use Corollary \ref{Corol 2} to obtain the bound given there, namely
\begin{equation}
\log H_{L}(a,\,b,\,c) <   G^{\frac{3}{4}+\mathcal{C}_{103}\frac{\log \log \log G}{\log \log G}}.
\end{equation}
 
 Thus from the above and \eqref{Corol 5 ready to go}, we obtain that
 \begin{equation}
 \log H_{L}\brac{a,\,b,\,c} < G^{\max\curly{\frac{3}{4},\, \alpha}+\mathcal{C}_{104}\frac{\log \log \log G}{\log \log G}}
 \end{equation}

Assume now instead that $\text{Nm}_{\QQ}^{L} \brac{\id{p}_{\max}}\geq G^{\frac{1}{2}}$. It immediately follows that for all other prime ideals $\id{p}$ contributing to $G$, we have that $\text{Nm}_{\QQ}^{L} \brac{\id{p}} < G^{\frac{1}{2}}$. 

Consider now
$\max_{\substack{\id{p} \mid c\Of{L} \\ \id{p} \neq \id{p}_{c}}} \ordid{c}$. We apply Yu's bound as before on this, taking the above comments into consideration. It follows that
\begin{align}
\max_{\substack{\id{p} \mid c\Of{L} \\ \id{p} \neq \id{p}_{c}}} \ordid{c} <& \mathcal{C}_{105}^{t+u}\brac{r+t+u+1}^{7/2} \log \log H_{L}(a,\,b,\,c) \text{Nm}_{\QQ}^{L}\brac{\id{p}} \nonumber \\
&\prod_{i=1}^{t}\log \brac{\text{Nm}_{\QQ}^{K}\brac{\id{p}_{i}}}\cdot \prod_{j=1}^{u} \log \brac{\text{Nm}_{\QQ}^{K}\brac{\id{q}_{j}}},
\end{align}
where $\text{Nm}_{\QQ}^{L} \brac{\id{p}} <G^{\frac{1}{2}}$. Following the same logic as the main text, we obtain that

\begin{equation}\label{Corol 5 ineq 1}
\frac{\log H_{L}(a,\,b,\,c)}{\log \log H_{L}(a,\,b,\,c)}<\mathcal{C}_{106}^{t+u}\brac{r+t+u+1}^{7/2}\log G \cdot  G^{\frac{1}{2}} \prod_{i=1}^{t} \log \brac{\text{Nm}_{\QQ}^{K}\brac{\id{p}_{i}}}\cdot \prod_{j=1}^{u} \log \brac{\text{Nm}_{\QQ}^{K}\brac{\id{q}_{j}}}.
\end{equation}
 
 Note that \eqref{H< second bound} still holds, and $\text{Nm}_{\QQ}^{L} \brac{\id{p}_{b}}\text{Nm}_{\QQ}^{L} \brac{\id{p}_{c}} \leq G$. Multiplying \eqref{H< second bound} and \eqref{Corol 5 ineq 1}, tidying terms as we do in the text, and considering \eqref{Corol 5 ready to go}, we obtain that 
 
 \begin{align}
 \frac{\log H_{L}\brac{a,\,b,\,c}}{\log \log H_{L}\brac{a,\,b,\,c}}<& \max\curly{G^{\frac{3}{4}+\mathcal{C}_{107}\frac{\log \log \log G}{\log \log G}},\, G^{\alpha +\mathcal{C}_{108}\frac{\log \log \log G}{\log \log G}}}.
 \end{align}
 
 More concisely, after applying Lemma \ref{last lemma}, we obtain that 
 \begin{equation}\label{Corol 5 assum 2 done}
 \log H_{L}\brac{a,\,b,\,c} < G^{\max \curly{\alpha,\, \frac{3}{4}} + \mathcal{C}_{101}\frac{\log \log \log G}{\log \log G}}.
 \end{equation}
 
 Thus, in either case depending on $\text{Nm}_{\QQ}^{L} \brac{\id{p}_{\max}}$, we obtain that
 
 \begin{equation}\label{Corol 5 done}
 \log H_{L}\brac{a,\,b,\,c} < G^{\max \curly{\alpha,\, \frac{3}{4}} + \mathcal{C}_{101}\frac{\log \log \log G}{\log \log G}}.
 \end{equation}
\end{proof}

\begin{corollary}\label{corol 6}
Assume that $\text{ord}_{\id{p}_{c}}c < \brac{\log H_{L}\brac{a,\,b,\,c}}^{\alpha}$ for $0<\alpha <1$. Then
 \begin{equation}
  \log H_{L}\brac{a,\,b,\,c} <\max \curly{ G^{\frac{3}{4} + \mathcal{C}_{109}\brac{\frac{\log \log \log G}{\log \log G}}},\, \mathcal{C}_{110} \brac{\log G}^{\frac{1}{1-\alpha}}}.
 \end{equation}

\end{corollary}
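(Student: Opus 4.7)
The plan is to mirror the two-case strategy of Corollary~\ref{corol 5}, replacing the hypothesis $\text{ord}_{\id{p}_c}c<G^\alpha$ by the softer hypothesis $\text{ord}_{\id{p}_c}c<\brac{\log H_L(a,b,c)}^\alpha$ and tracking how this change propagates. First I would split
\[\max_{\id{p}\mid \prin{c}_L}\ordid{c}=\max\curly{\max_{\substack{\id{p}\mid c\Of{L}\\ \id{p}\neq\id{p}_c}}\ordid{c},\,\text{ord}_{\id{p}_c}\brac{c}}\]
and substitute the hypothesis into \eqref{H < ord c} to obtain
\[\log H_L(a,b,c)<\mathcal{C}\log G\cdot\max\curly{\max_{\substack{\id{p}\mid c\Of{L}\\ \id{p}\neq\id{p}_c}}\ordid{c},\,\brac{\log H_L(a,b,c)}^\alpha}.\]
If the inner maximum is attained by the second term, then $\brac{\log H_L(a,b,c)}^{1-\alpha}<\mathcal{C}\log G$ after rearrangement, which raised to the power $1/(1-\alpha)$ yields the $\mathcal{C}_{110}\brac{\log G}^{1/(1-\alpha)}$ branch of the claimed bound.

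It remains to handle the case in which the first term dominates, and here I would split according to the size of $\text{Nm}_\QQ^L(\id{p}_{\max})$ exactly as in Corollary~\ref{corol 5}. If $\text{Nm}_\QQ^L(\id{p}_{\max})<G^{1/2}$, then in particular $\text{Nm}_\QQ^L(\id{p}_c)<G^{1/2}$, and the first inequality of Corollary~\ref{Corol 2} applied with $\alpha=1/2$ yields directly
\[\log H_L(a,b,c)<G^{3/4+\mathcal{C}\frac{\log\log\log G}{\log\log G}}.\]
If instead $\text{Nm}_\QQ^L(\id{p}_{\max})\geq G^{1/2}$, then every prime ideal dividing $abc\Of{L}$ other than $\id{p}_{\max}$ has norm at most $G^{1/2}$, and in particular so does every prime ideal $\id{p}\mid c\Of{L}$ with $\id{p}\neq\id{p}_c$.

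With this observation, I would apply Lemma~\ref{Yu Varieties Bound} to $\ordid{c}$ for each such $\id{p}$ exactly as in the derivation of \eqref{ordid(c) leq bound}, replacing the factor $\text{Nm}_\QQ^L(\id{p}_c)$ by the upper bound $G^{1/2}$, and then multiply the resulting estimate by the second $S$-unit inequality \eqref{H< second bound}, using $\text{Nm}_\QQ^L(\id{p}_b)\text{Nm}_\QQ^L(\id{p}_c)\leq G$ together with the arithmetic-geometric clean-up of logarithmic products already carried out in the proof of Theorem~\ref{main theorem} (see \eqref{Big relation with product of logs}--\eqref{G and log primes from sunit}) to absorb the remaining log-product factors into $G^{\mathcal{C}\frac{\log\log\log G}{\log\log G}}$. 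A variant of Lemma~\ref{last lemma} then yields the $G^{3/4+\mathcal{C}_{109}\frac{\log\log\log G}{\log\log G}}$ branch also in this final subcase, and taking the maximum over all subcases gives the corollary. The main obstacle is purely bookkeeping: one must keep the two competing branches separate and verify that, by restricting Lemma~\ref{Yu Varieties Bound} to primes $\id{p}\neq\id{p}_c$, we genuinely avoid the $\text{Nm}_\QQ^L(\id{p}_c)$ factor that the hypothesis of the corollary is designed to sidestep.
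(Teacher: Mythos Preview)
Your proposal is correct and follows essentially the same approach as the paper: split $\max_{\id{p}\mid\prin{c}_L}\ordid{c}$ into the $\id{p}_c$ term and the rest, use the hypothesis with \eqref{H < ord c} to produce the $\mathcal{C}_{110}(\log G)^{1/(1-\alpha)}$ branch, then treat the remaining branch by the same two-case split on $\text{Nm}_\QQ^L(\id{p}_{\max})\lessgtr G^{1/2}$ as in Corollary~\ref{corol 5}, invoking Corollary~\ref{Corol 2} in the small case and Yu's bound multiplied against \eqref{H< second bound} in the large case. The paper's proof does exactly this, with the second subcase handled by an explicit back-reference to the corresponding subcase of Corollary~\ref{corol 5}.
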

\begin{proof}
 Assume that $\text{ord}_{\id{p}_{c}}(c) < \brac{\log H_{L}(a,\,b,\,c)}^{\alpha}$ for some $0<\alpha<1$. As in Corollary \ref{corol 5} it immediately follows that
 \begin{align}
 \max_{\id{p} \mid \prin{c}_{L}} \ordid{c} &= \max\curly{\max_{\substack{\id{p} \mid c\Of{L} \\ \id{p} \neq \id{p}_{c}}} \ordid{c},\, \text{ord}_{\id{p}_{c}}\brac{c}} \nonumber \\
 &\leq \max \curly{\max_{\substack{\id{p} \mid c\Of{L} \\ \id{p} \neq \id{p}_{c}}} \ordid{c},\, \brac{\log H_{L}\brac{a,\,b,\,c}}^{\alpha}}.
 \end{align}
 
 This along with \eqref{H < ord c} implies that 
 \begin{equation}
 \log H_{L}(a,\,b,\,c) < \max \curly{\max_{\substack{\id{p} \mid c\Of{L} \\ \id{p} \neq \id{p}_{c}}} \ordid{c} \log G,\, \mathcal{C}_{111}\brac{\log H_{L}\brac{a,\,b,\,c}}^{\alpha}\log G}.
 \end{equation}
 
 If 
 \[\max \curly{\max_{\substack{\id{p} \mid c\Of{L} \\ \id{p} \neq \id{p}_{c}}} \ordid{c} \log G,\, \brac{\log H_{L}\brac{a,\,b,\,c}}^{\alpha}\log G} = \mathcal{C}_{112}\brac{\log H_{L}\brac{a,\,b,\,c}}^{\alpha}\log G,\]
 then we can see that
 \[\log H_{L}(a,\,b,\,c) < \mathcal{C}_{113}\brac{\log G}^{\frac{1}{1-\alpha}}.\]

We now consider two cases.

In the first case we assume that $\text{Nm}_{\QQ}^{L} \brac{\id{p}_{\max}}<G^{\frac{1}{2}}$. In this case we can appeal directly to Corollary \ref{Corol 2}, obtaining that
 \begin{equation}
 \log H_{L}(a,\,b,\,c) <   G^{\frac{3}{4}+\mathcal{C}_{114}\brac{\frac{\log \log \log G}{\log \log G}}}.
 \end{equation}

For the second case we assume that $\text{Nm}_{\QQ}^{L} \brac{\id{p}_{\max}}\geq G^{\frac{1}{2}}$ and follow the same argument as in Case 2 in Corollary \ref{corol 5}; see section 4.5.2.
 
 As before, we see that for all prime ideals $\id{p} \neq \id{p}_{\max}$ contributing to $G$, we have that $\text{Nm}_{\QQ}^{L}\brac{\id{p}} < G^{\frac{1}{2}}$. Applying Yu's bound on $\max_{\substack{\id{p} \mid c\Of{L} \\ \id{p} \neq \id{p}_{c}}} \ordid{c}$ again, we find that 
 
 \begin{equation}
\max_{\substack{\id{p} \mid c\Of{L} \\ \id{p} \neq \id{p}_{c}}} \ordid{c} < \mathcal{C}_{115}^{s+t}\brac{r+t+u+1}^{7/2} \log \log H_{L}(a,\,b,\,c)\cdot \text{Nm}_{\QQ}^{L}\brac{\id{p}} \prod_{i=1}^{t} \log \brac{\text{Nm}_{\QQ}^{K}\brac{\id{p}_{i}}}\cdot \prod_{j=1}^{u} \log \brac{\text{Nm}_{\QQ}^{K}\brac{\id{q}_{j}}},
\end{equation}
 
 Again, we know that $\text{Nm}_{\QQ}^{L} \brac{\id{p}} <G^{\frac{1}{2}}$. Following the logic of the main text and Section 4.5.2, it again follows that
 
 \begin{equation}
\frac{\log H_{L}(a,\,b,\,c)}{\log \log H_{L}(a,\,b,\,c)}<\mathcal{C}_{116}^{s+t}\brac{r+t+u+1}^{7/2}\log G \cdot  G^{1-\beta} \prod_{i=1}^{t} \log \brac{\text{Nm}_{\QQ}^{K}\brac{\id{p}_{i}}}\cdot \prod_{j=1}^{u} \log \brac{\text{Nm}_{\QQ}^{K}\brac{\id{q}_{j}}}.
\end{equation}
After tidying as we have previously and applying Lemma \ref{last lemma}, it follows that 
 
 \[\log H_{L}(a,\,b,\,c) < G^{\frac{3}{4}+\mathcal{C}_{117}\frac{\log \log \log G}{\log \log G}}.\]
 
 Combining these results, in both cases we obtain that
 
 \begin{equation}
  \log H_{L}\brac{a,\,b,\,c} <\max \curly{ G^{\frac{3}{4} + \mathcal{C}_{109}\frac{\log \log \log G}{\log \log G}},\, \mathcal{C}_{110} \brac{\log G}^{\frac{1}{1-\alpha}}}.
 \end{equation}
\end{proof}

\section{Application of Le Fourn's Method}

A method of Le Fourn \cite{le2020tubular} allows us to improve the above somewhat. We will prove Theorem~3 from the introduction. The important point here is that in our $S$-unit bounds we can use the following lemma to replace the use of the largest prime with the third largest prime in the set $S$. This reduces our reliance on $\id{p}_{a},\, \id{p}_{b},\, \id{p}_{c}$.

\begin{lemma}\label{lefourn}
Let $K$ be a number field of degree $d$ and let $S \subset M_{K}$ containing all the infinite places and a finite number of finite places. Let $s = \abs{S}$ and let $\alpha,\, \beta \in K^{*}$. Consider the $S$-unit equation 
\[\alpha x + \beta y =1\]
with $x,\,y \in \Of{S}^{*}$.

If $S$ contains at most two finite places then all solutions of the above satisfy
\[\max\curly{\h{x},\, \h{y}} \leq \mathcal{C}_{118}\brac{d,\,s}R_{S} \log^{+}\brac{R_{S}}H,\]

where $H=\max \curly{\h{\alpha},\, \h{\beta},\, 1,\, \frac{\pi}{d}}$, $R_{S}$ is the $S$-regulator and $\mathcal{C}_{118}(d,\,s)$ is given in \cite{le2020tubular}.

For any general set of places $S$, all solutions of the above equation satisfy
\[\max\curly{\h{x},\, \h{y}} \leq \mathcal{C}_{119}\brac{d,\,s}P'_{S} R_{S}\brac{1+ \frac{\log^{+}\brac{R_{S}}}{\log^{+}\brac{P'_{S}}}}H,\]
with $P'_{S}$ the third largest value of the norms of ideals coming from the finite places of $S$ and $\mathcal{C}_{119}\brac{d,\,s}$ given in \cite{le2020tubular}. If there are fewer than 3 finite places in $S$ then we take $P'_{S}=1$.

\end{lemma}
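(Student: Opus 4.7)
The plan is to follow the strategy of Le Fourn in \cite{le2020tubular}, which sharpens the classical $S$-unit bounds (such as that used in Lemma \ref{sunitbound}) by a tubular-neighbourhood argument at the largest archimedean or non-archimedean place. Since the statement is a direct quotation of a result proved in \cite{le2020tubular}, the proof of this lemma is ultimately by citation; what follows describes the strategy I would use to obtain such a bound from scratch if forced to.

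First, I would reduce the problem to bounding the exponents of the solutions in a suitable basis of $S$-units. Choosing a fundamental system $\eta_{1},\dots,\eta_{s-1}$ of $\Of{S}^{*}/\text{roots of unity}$ satisfying an analogue of \eqref{good system of fund units} (so that $\prod_{i} \h{\eta_{i}} \leq \mathcal{C} R_{S}$), any $S$-unit $x$ can be written as $x = \mu \eta_{1}^{n_{1}} \cdots \eta_{s-1}^{n_{s-1}}$ with $\mu$ a root of unity. Arguing as in the proof of Lemma \ref{upper bound delta} (via the logarithmic embedding and the non-degeneracy of the matrix whose determinant is $R_{S}$), the height $\h{x}$ is comparable to $R_{S} \cdot \max_{i} \abs{n_{i}}$, so that bounding $\h{x}$ reduces to bounding $\max_{i} \abs{n_{i}}$. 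The same reduction applies to $y$.

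Next, I would extract a small linear form in logarithms from the $S$-unit equation $\alpha x + \beta y = 1$. Selecting a place $v_{0} \in S$ at which one of $\alpha x$ or $\beta y$ is very close to $-1$ (in the archimedean or $v_{0}$-adic metric, whichever is appropriate), the equation forces
\[
\left| \alpha \mu \eta_{1}^{n_{1}} \cdots \eta_{s-1}^{n_{s-1}} + 1 \right|_{v_{0}}
\]
to be extremely small. Combined with Yu's $p$-adic or Matveev's archimedean lower bounds for linear forms in logarithms (in the spirit of Lemma \ref{Yu Varieties Bound}), this yields an upper bound on $\max_{i} \abs{n_{i}}$ of the form $\mathcal{C}(d,s) \cdot \mathrm{Nm}(\id{p}_{v_{0}}) \cdot R_{S} \cdot H$, up to logarithmic factors, where $\id{p}_{v_{0}}$ is the prime of $S$ corresponding to $v_{0}$.

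The crucial and genuinely new ingredient, which is the heart of Le Fourn's paper, is the \emph{tubular} choice of $v_{0}$: rather than being forced to pick the place of largest norm $P_{S}$, one shows that a suitable convexity/geometry-of-numbers argument on the logarithmic embedding allows two bad places to be handled simultaneously, leaving only the contribution of the third-largest prime $P'_{S}$. When $S$ contains at most two finite places, this mechanism removes the prime factor entirely, producing the cleaner bound $\mathcal{C}_{118}(d,s) R_{S} \log^{+}(R_{S}) H$ of the first part of the lemma. This tubular step is by far the main obstacle, and for the present application I would simply invoke \cite{le2020tubular} for it. The remainder of the proof is a packaging of the bounds on the $n_{i}$ back into bounds on $\h{x}$ and $\h{y}$ via the comparability established in the first step.
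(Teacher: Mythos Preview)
Your proposal is correct and matches the paper's approach: the paper's proof of this lemma is simply ``This is Theorem 1.4 of \cite{le2020tubular}. We note the constants are taken from \cite{gyHory2006bounds}.'' You correctly identify that the result is obtained by citation, and the extra sketch you provide of Le Fourn's tubular argument is accurate supplementary context that the paper does not include.
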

\begin{proof}
This is Theorem 1.4 of \cite{le2020tubular}. We note the constants are taken from \cite{gyHory2006bounds}.
\end{proof}

When we apply Le Fourn's lemma in the places we previously applied Lemma \ref{sunitbound} we attain (after moving things into the constant) essentially the same bounds with $\id{p}_{a},\, \id{p}_{b},\, \id{p}_{c}$ replaced by $\id{p}_{a}',\, \id{p}_{b}'$ and $\id{p}_{c}'$ where $\id{p}_{a}'$ is the prime ideal of third largest norm dividing $a \Of{L}$ and similarly for $\id{p}_{b}'$ and $\id{p}_{c}'$. If fewer than three prime ideals divide $a,\, b$ or $c$ then we define the corresponding norm to be 1.

We follow the proof of the main theorem, but we replace any use of Lemma \ref{sunitbound} with Lemma \ref{lefourn}. For the most part, all that changes is any occurrence of $\id{p}_{a},\, \id{p}_{b}$ and $\id{p}_{c}$ arising from the use of Lemma \ref{sunitbound} is replaced by $\id{p}_{a}',\, \id{p}_{b}'$ and $\id{p}_{c}'$. We follow the line of reasoning from the main text up until \eqref{first sunit bound}. For this first application of $S$-units, where we have no finite places, we continue to use Lemma \eqref{sunitbound} as it is simpler in this case than Lemma \ref{lefourn}. As before, we obtain \eqref{D}. 

As before, now let $S=S_{\infty} \cup \curly{\id{p} \,:\, \id{p}\mid c \Of{L}}.$ Applying Lemma \ref{lefourn} to 
\[-\frac{u_{a}a'}{u_{c}c'} -\frac{u_{b}b'}{u_{c}c'} = 1,\]
we obtain that
\begin{align}
    \max \curly{\h{-\frac{u_{a}}{u_{c}c'}},\, \h{-\frac{u_{b}}{u_{c}}}} &< \mathcal{C}_{120}P'_{S}R_{S}\brac{1+\frac{\log^{+}R_{S}}{\log^{+}P'_{S}}}\max\curly{\h{a'},\, \h{b'},\,1,\,\frac{\pi}{d}} \nonumber \\
    &< \mathcal{C}_{121}\text{Nm}_{\QQ}^{L}\brac{\id{p}'_{c}}R_{S}^{2} \max{\curly{\h{a'},\, \h{b'},\,1,\,\frac{\pi}{d}}} \nonumber \\
    &< \mathcal{C}_{122}\text{Nm}_{\QQ}^{L}\brac{\id{p}'_{c}} \brac{\prod_{\substack{\id{p} \subset \Of{L}\\ \id{p}\mid c\Of{L}}}\log \text{Nm}_{\QQ}^{L}\brac{\id{p}}}^{2}\max \curly{\frac{\pi}{2}\h{a'},\, \frac{\pi}{2}\h{b'},\, \frac{\pi}{2}} \nonumber \\
    &=\mathcal{C}_{123}\text{Nm}_{\QQ}^{L}\brac{\id{p}'_{c}} \brac{\prod_{\substack{\id{p} \subset \Of{L}\\ \id{p}\mid c\Of{L}}}\log \text{Nm}_{\QQ}^{L}\brac{\id{p}}}^{2} \h{b'},
\end{align}
where the line of reasoning about $ \max{\curly{\h{a'},\, \h{b'},\,1,\,\frac{\pi}{d}}}$ follows from assumption \eqref{1<ha<hb<hc}. We are thus able to replace \eqref{E} with
\begin{equation}
    \max\curly{\h{\frac{u_{a}a'}{u_{c}c'}},\, \h{\frac{u_{b}b'}{u_{c}c'}}} < \mathcal{C}_{124}\text{Nm}_{\QQ}^{L}\brac{\id{p}'_{c}} \brac{\prod_{\substack{\id{p} \subset \Of{L}\\ \id{p}\mid c\Of{L}}}\log \text{Nm}_{\QQ}^{L}\brac{\id{p}}}^{2} \h{b'}.
\end{equation}

As before, we now let $S=S_{_\infty}\cup\curly{\id{p}\, :\, \id{p} \mid bc\Of{L}}$. Applying Lemma 8 again, following the same method as above, in place of \eqref{F} we obtain that
\begin{equation}
    \max \curly{\h{-\frac{u_{a}}{u_{c}c'}},\, \h{-\frac{u_{b}b'}{u_{c}c'}}} < \mathcal{C}_{125}\text{Nm}_{\QQ}^{L}\brac{\id{q}} \brac{\prod_{\substack{\id{p} \subset \Of{L}\\ \id{p}\mid bc\Of{L}}}\log \text{Nm}_{\QQ}^{L}\brac{\id{p}}}^{2} \h{a'},
\end{equation}
where $\id{q}$ is the prime ideal of $\Of{L}$ of third largest norm dividing $bc\Of{L}$. We note that this is not necessarily $\id{p}_{b}$ or $\id{p}_{c}$, though it may have the same norm as one of them, and could indeed be either of them.

We now follow the argument of the main text again, using the above inequalities in place of \eqref{E} and \eqref{F} as necessary, and we end up obtaining

\begin{align}\label{new theorem equation''}
\log H_{L}(a,\,b,\,c) &<  \brac{\text{Nm}_{\QQ}^{L}\brac{\id{p}_{a }}\text{Nm}_{\QQ}^{L}\brac{\id{p}_{b}} \text{Nm}_{\QQ}^{L}\brac{\id{p}_{c}} \text{Nm}_{\QQ}^{L}\brac{\id{p}_{c}'} \text{Nm}_{\QQ}^{L}\brac{\id{q}}}^{\frac{1}{3}} G^{\mathcal{C}_{126}\brac{\frac{\log \log \log G}{\log \log G}+\frac{1}{\log \log G}+\frac{\log \log G}{\log G}}}
\end{align} 
in place of \eqref{new theorem equation}. As before, $\frac{\log \log \log G}{\log \log G}$ is the dominant term in the exponent of $G$, so we can write

\begin{align}\label{new theorem equation'''}
\log H_{L}(a,\,b,\,c) &<  \brac{\text{Nm}_{\QQ}^{L}\brac{\id{p}_{a }}\text{Nm}_{\QQ}^{L}\brac{\id{p}_{b}} \text{Nm}_{\QQ}^{L}\brac{\id{p}_{c}} \text{Nm}_{\QQ}^{L}\brac{\id{p}_{c}'} \text{Nm}_{\QQ}^{L}\brac{\id{q}}}^{\frac{1}{3}} G^{\mathcal{C}_{127}\frac{\log \log \log G}{\log \log G}}.
\end{align} 

We explore some cases. If $\id{q}=\id{p}_{b}'$ then 
\[\text{Nm}_{\QQ}^{L}\brac{\id{p}_{a }}\text{Nm}_{\QQ}^{L}\brac{\id{p}_{b}} \text{Nm}_{\QQ}^{L}\brac{\id{p}_{c}} \text{Nm}_{\QQ}^{L}\brac{\id{p}_{c}'} \text{Nm}_{\QQ}^{L}\brac{\id{p}_{b}'}\leq G.\] 

If $\id{q}=\id{p}_{c}'$ then there exists a prime ideal $\id{p}_{c}''$, the prime of second largest norm dividing $c \Of{L}$. Note that $\text{Nm}_{\QQ}^{L}\brac{\id{p}_{c}'}\leq \text{Nm}_{\QQ}^{L}\brac{\id{p}_{c}}'' \leq \text{Nm}_{\QQ}^{L}\brac{\id{p}_{c}}$, and all these primes divide $abc \Of{L}$ so their norms contribute to $G$. Thus, in this case we obtain that

\begin{align*}
    \text{Nm}_{\QQ}^{L}\brac{\id{p}_{a }}\text{Nm}_{\QQ}^{L}\brac{\id{p}_{b}} \text{Nm}_{\QQ}^{L}\brac{\id{p}_{c}} \text{Nm}_{\QQ}^{L}\brac{\id{p}_{c}'} \text{Nm}_{\QQ}^{L}\brac{\id{p}_{c}'}&< \text{Nm}_{\QQ}^{L}\brac{\id{p}_{a }}\text{Nm}_{\QQ}^{L}\brac{\id{p}_{b}} \text{Nm}_{\QQ}^{L}\brac{\id{p}_{c}} \text{Nm}_{\QQ}^{L}\brac{\id{p}_{c}'} \text{Nm}_{\QQ}^{L}\brac{\id{p}_{c}''} \\
    &\leq G.
\end{align*}
 We have dealt with the cases where $\id{q} = \id{p}_{b}'$ and $\id{q} = \id{p}_{c}'$. Using the notation above, there are four further possibilities for $\id{q}$, namely $\id{p}_{b},\, \id{p}_{c},\, \id{p}_{b}'',\, \id{p}_{c}''$. If $\id{q} = \id{p}_{b}''$ or $\id{p}_{c}''$ then substituting into the above expression, it follows from the definition of $G$ that $\text{Nm}_{\QQ}^{L}\brac{\id{p}_{a }}\text{Nm}_{\QQ}^{L}\brac{\id{p}_{b}} \text{Nm}_{\QQ}^{L}\brac{\id{p}_{c}} \text{Nm}_{\QQ}^{L}\brac{\id{p}_{c}'} \text{Nm}_{\QQ}^{L}\brac{\id{q}} < G$.
 
 On the other hand, if $\id{q}= \id{p}_{b}$ or $\id{p}_{c}$, we can still upper bound this by $G$. Assume $\id{q}= \id{p}_{b}$. Then we deduce that $\text{Nm}_{\QQ}^{L}\brac{\id{p}_{b}} \leq \text{Nm}_{\QQ}^{L}\brac{\id{p}_{c}'} \leq \text{Nm}_{\QQ}^{L}\brac{\id{p}_{c}}$. It follows then that
 
 \begin{align*}
         \text{Nm}_{\QQ}^{L}\brac{\id{p}_{a }}\text{Nm}_{\QQ}^{L}\brac{\id{p}_{b}} \text{Nm}_{\QQ}^{L}\brac{\id{p}_{c}} \text{Nm}_{\QQ}^{L}\brac{\id{p}_{c}'} \text{Nm}_{\QQ}^{L}\brac{\id{p}_{b}}&< \text{Nm}_{\QQ}^{L}\brac{\id{p}_{a }}\text{Nm}_{\QQ}^{L}\brac{\id{p}_{b}} \text{Nm}_{\QQ}^{L}\brac{\id{p}_{c}} \text{Nm}_{\QQ}^{L}\brac{\id{p}_{c}'} \text{Nm}_{\QQ}^{L}\brac{\id{p}_{c}'} \\
    &\leq G.
 \end{align*}
The argument is symmetric so applies if $\id{q}= \id{p}_{b}$. Thus in all cases we obtain that
\begin{align}\label{new theorem equation Le Fourn}
\log H_{L}(a,\,b,\,c) &<  G^{\frac{1}{3}+ \mathcal{C}_{128}\frac{\log \log \log G}{\log \log G}}.
\end{align} 

We note that for given $a,\,b,\,c$, once we know the prime ideals dividing $a\Of{K},\,b\Of{K}$ and $c\Of{K}$, the inequality \eqref{new theorem equation'''} may be stronger than that given in \eqref{new theorem equation Le Fourn}.

\section{Some Remarks}

A combination of methods and results by Gy\"{o}ry and Yu \cite{gyHory2006bounds} and Gy\"{o}ry \cite{gyHory2008abc}, \cite{gyHory2019bounds} with the method of Le Fourn \cite{le2020tubular} can be used directly to find results over the base field, as done by Gy\"{o}ry in \cite{gyorynewpaper}. Further, in terms of $S$, Gy\"{o}ry improved the $S$-unit bound given by Le Fourn. Gy\"ory's result regarding the $abc$ conjecture is as follows.

Let $K$ be a number field and let $a,\,b,\, c:=a+b$ belong to $K^{*}$. Define
\[N_{K} = \prod_{\upsilon} \textrm{Nm}_{\QQ}^{K}\brac{\id{p}}^{\ordp{\id{p}}},\]
where $\upsilon$ is taken from the set of finite places such that $\abs{a}_{\upsilon},\, \abs{b}_{\upsilon}$ and $\abs{c}_{\upsilon}$ are not all equal, and $p$ is the rational prime such that $\id{p} \cap \ZZ = p$. Then for all $\varepsilon$ there is a computable constant $\mathcal{C}_{129}$ depending only on $d=\squar{K:\QQ},\, \Delta_{K}$ and $\varepsilon$ such that
\[\log H_{K}\brac{a,\,b,\,c} < \mathcal{C}_{129}G_{K}^{\frac{1}{3}+\epsilon}.\]

Gy\"ory's combination of his method with that of Le Fourn's enables him to state his results entirely over the base field $K$. On the other hand, the dependence on the norms of prime ideals in this paper allows us to state corollaries depending on these norms, leading to the sub-exponential bound for example. Further, we believe they may allow some attack at open problems such as the smooth $abc$ conjecture \cite{lagarias2011smooth}.

For all these results, we have considered them in terms of $\log H_{L}\brac{a,\,b,\,c}$. We note that $H_{L}\brac{a,\,b,\,c} = H_{K}\brac{a,\,b,\,c}^{h_{K}}$, as $h_{K}=\squar{L:K}$. Thus, as $h_{K}$ depends on the field, after taking the logarithm we can incorporate the $h_{K}$ into our computable constant and have the height in terms of the base field $K$. However, so far we have been unable to do the same for the radical $G$.

\section{Application to Effective Skolem-Mahler-Lech Problem}

In this section we will use our main result to allow us to determine whether a linear recurrence sequence of degree three with no repeated roots of the characteristic polynomial has zeroes. As noted in the introduction, there exists an algorithm to determine whether there are periodic zeroes, so we are concerned with the case when there are only potentially finitely many zeroes.

\subsection{Case Where all Terms are Coprime}

Consider a linear recurrence sequence of the following form:
\[a_{n}=c_{1} a_{n-1} +c_{2} a_{n-2} + c_{3} a_{n-3},\]
where the values of $a_{0},\, a_{1}$ and $a_{2}$ are known. We form the characteristic polynomial of the sequence
\[x^{3}-c_{1} x^{2} -c_{2} x -c_{3} \]
and assume that this has distinct roots $r_{1},\,r_{2},\,r_{3}$. Let $K=\QQ\brac{r_{1},\,r_{2},\,r_{3}}$. We further assume the roots are pairwise coprime when considered as principal ideals of the ring of integers $\Of{K}$. 

By our assumptions, we know that we can write
\[a_{n}=k_{1}r_{1}^{n} + k_{2} r_{2}^{n} + k_{3}r_{3}^{n}\]
where $k_{1},\,k_{2},\,k_{3}$ are constants depending on $a_{0},\, a_{1}$ and $a_{2}$. We further assume $k_{1},\, k_{2}$ and $k_{3}$ are coprime, but we will look at ways to try and deal with this when not coprime later.

For ease, we use the result obtained by using Le Fourn's Lemma, that is the inequality given at \eqref{new theorem equation Le Fourn}. Assume there exists an $n$ such that $a_{n}=0$. Explicitly, 
\[0=k_{1}r_{1}^{n} + k_{2} r_{2}^{n} + k_{3}r_{3}^{n}.\]
We are in a position to use the result. Let $L = HCF(K)$ and define $G$ as above. Then, 
\[\log H\brac{k_{1}r_{1}^{n},\,k_{2} r_{2}^{n},\,k_{3}r_{3}^{n}} < G^{\frac{1}{3}+\mathcal{C}_{128}\frac{\log \log \log G}{\log \log G}}.\]
Without loss of generality, assume that
\[\h{r_{1}}\leq \h{r_{2}} \leq \h{r_{3}}.\] 
Note that 
\[H\brac{k_{1}r_{1}^{n},\,k_{2} r_{2}^{n},\,k_{3}r_{3}^{n}}=H\brac{\frac{k_{1}}{k_{3}}r_{1}^{n},\,\frac{k_{2}}{k_{3}} r_{2}^{n},\,r_{3}^{n}}.\]
Further, by comparing definitions, \[\h{r_{3}^{n}} \leq \log H\brac{\frac{k_{1}}{k_{3}}r_{1}^{n},\,\frac{k_{2}}{k_{3}} r_{2}^{n},\,r_{3}^{n}}.\] Moreover, $\h{r_{3}^{n}} ~= n \h{r_{3}}$ \cite{Waldschmidt}. Combining all this we obtain that
\[n\h{r_{3}} < G^{\frac{1}{3}+\mathcal{C}_{128}\frac{\log \log \log G}{\log \log G}}.\]
It follows that 
\[n < \frac{G^{\frac{1}{3}+\mathcal{C}_{128}\frac{\log \log \log G}{\log \log G}}}{\h{r_{3}}},\]
giving an upper bound for $n$. 

Explicitly, given a recurrence relation satisfying the given conditions, we first check whether there are any zeroes in arithmetic progressions. If so, we are done. If not, we apply the above method, which gives an upper bound for the maximal value of $n$ such that $a_{n}=0$. We numerically check the values of $a_{x}$ for $x$ less than the obtained upper bound. This answers the question as to whether the recurrence sequence has a zero. 

\begin{example}
Consider the linear recurrence sequence with $a_{0}=31,\, a_{1}=112,\, a_{2}=452$ and \[a_{n}=10a_{n-1}-31a_{n-2}+30a_{n-3}.\]
This sequence has characteristic polynomial 
\[x^{3}-10x^{2}+31x-30,\]
with roots $2,\,3$ and $5$. Thus, 
\[a_{n}=k_{1}2^{n}+k_{2}3^{n}+k_{3}5^{n},\]
where $k_{1},\,k_{2}$ and $k_{3}$ are to be found. They are found to be $k_{1}=7,\, k_{2}=11$ and $k_{3}=13$, so
\[a_{n}=7\cdot2^{n}+11 \cdot3^{n}+13 \cdot 5^{n}.\]

This means $G=2\cdot3\cdots5\cdot7\cdot11\cdot13=30030$. The largest logarithmic height of the roots is $\h{5}=\log 5$. It follows that if $a_{n}=0$, then
\begin{align*}
    n &< \frac{30030^{\frac{1}{3}+\mathcal{C}_{128}\frac{\log \log \log 30030}{\log \log 30030}}}{\log 5}\\
    &< 20 \cdot 43^{\mathcal{C}_{130}}.
\end{align*}
In principle, $\mathcal{C}_{128}$ and $\mathcal{C}_{130}$ can be computed following the proof given in this paper. This gives an upper bound for $n$.
\end{example}
In this example, once we derive $a_{n}=7\cdot2^{n}+11 \cdot3^{n}+13 \cdot 5^{n}$, it is clear there are no zeroes. With more complicated examples, it may not be so obvious.

\subsection{Case Where Terms are Not Coprime}

Assume we have a linear recurrence relation as above with characteristic polynomial $f(x)$ with roots $r_{1},\, r_{2},\, r_{3}$. Assume there are constants $k_{1},\, k_{2},\, k_{3}$ so
\[a_{n}=k_{1}r_{1}^{n}+k_{2}r_{2}^{n}+k_{3}r_{3}^{n}.\]

We assume nothing about coprimeness. If they are all coprime, we're done as above. We thus assume $k_{1}r_{1}^{n},\, k_{2}r_{2}^{n},\, k_{3}r_{3}^{n}$ are not coprime. 

If there exists an $n$ such that $a_{n}=0$, then the same prime ideal must divide all 3 terms. We can see this as if
\[0=k_{1}r_{1}^{n}+k_{2}r_{2}^{n}+k_{3}r_{3}^{n},\]
then \[-k_{1}r_{1}^{n}=k_{2}r_{2}^{n}+k_{3}r_{3}^{n},\]

and it follows if a prime ideal divides two of these as ideals, it has to divide the third.

More rigorously, 
\begin{equation}\label{valuation property}
\ordid{a+b}\geq \min \curly{\ordid{a},\, \ordid{b}},    
\end{equation}

with equality when $\ordid{a}\neq \ordid{b}$. The claim directly follows from this. It also follows from this that at least two of the terms are divisible by the prime ideal to the same order.

We consider these as ideals of $\Of{K}$. Assume that $\id{q}$ is a prime ideal of $\Of{K}$ dividing all three terms, and that  $\text{ord}_{\id{q}}\brac{k_{1}r_{1}^{n}} = \text{ord}_{\id{q}}\brac{k_{2}r_{2}^{n}} = l$. We write
\begin{align*}
    k_{1}r_{1}^{n}\Of{K} &= \id{p}_{k_{1},\,1}^{e_{k_{1},\,1}}\cdots \id{p}_{k_{1},\,a}^{e_{k_{1},\,a}} \id{p}_{r_{1},\,1}^{n \cdot e_{r_{1},\,1}}\cdots \id{p}_{r_{1},\,b}^{n \cdot e_{r_{1},\,b}} \id{q}^{l}\\
    k_{2}r_{2}^{n} \Of{K}&= \id{p}_{k_{2},\,1}^{e_{k_{2},\,1}}\cdots \id{p}_{k_{2},\,a}^{e_{k_{2},\,c}} \id{p}_{r_{2},\,1}^{n \cdot e_{r_{2},\,1}}\cdots \id{p}_{r_{2},\,b}^{n \cdot e_{r_{2},\,d}} \id{q}^{l}\\
     k_{3}r_{3}^{n} \Of{K} &= \id{p}_{k_{3},\,1}^{e_{k_{3},\,1}}\cdots \id{p}_{k_{3},\,f}^{e_{k_{3},\,f}} \id{p}_{r_{3},\,1}^{n \cdot e_{r_{3},\,1}}\cdots \id{p}_{r_{3},\,b}^{n \cdot e_{r_{3},\,g}} \id{q}^{m},
\end{align*}
where these ideals are prime ideals of $\Of{K}$.
    
Note, it may be the case that $l=m$. Also, if $\id{q}\mid r_{1}^{n}$, this implies that $l=an$ for some $a$, but we will see this doesn't matter for the argument. Finally, it may be that there is a further prime ideal that divides all three terms;p if so, we apply the following process iteratively on all prime ideals dividing all three terms. 

We now move the the Hilbert Class Field $L$. All the ideals above are principal as ideals of $\Of{L}$, so we can write 
\begin{align*}
    k_{1}r_{1}^{n}&= u_{1}{p}_{k_{1},\,1}^{e_{k_{1},\,1}}\cdots {p}_{k_{1},\,a}^{e_{k_{1},\,a}} {p}_{r_{1},\,1}^{n \cdot e_{r_{1},\,1}}\cdots {p}_{r_{1},\,b}^{n \cdot e_{r_{1},\,b}} {q}^{l}\\
    k_{2}r_{2}^{n} &= u_{2}{p}_{k_{2},\,1}^{e_{k_{2},\,1}}\cdots {p}_{k_{2},\,a}^{e_{k_{2},\,c}} {p}_{r_{2},\,1}^{n \cdot e_{r_{2},\,1}}\cdots {p}_{r_{2},\,b}^{n \cdot e_{r_{2},\,d}} {q}^{l}\\
     k_{3}r_{3}^{n}  &= u_{3}{p}_{k_{3},\,1}^{e_{k_{3},\,1}}\cdots {p}_{k_{3},\,f}^{e_{k_{3},\,f}} {p}_{r_{3},\,1}^{n \cdot e_{r_{3},\,1}}\cdots {p}_{r_{3},\,b}^{n \cdot e_{r_{3},\,g}} {q}^{m},
\end{align*}
where the terms on the right hand side are all elements of $L$ that generate the relevant principal ideals. Thus, we can now write

\begin{align*}
a_{n}=&u_{1}{p}_{k_{1},\,1}^{e_{k_{1},\,1}}\cdots {p}_{k_{1},\,a}^{e_{k_{1},\,a}} {p}_{r_{1},\,1}^{n \cdot e_{r_{1},\,1}}\cdots {p}_{r_{1},\,b}^{n \cdot e_{r_{1},\,b}} {q}^{l}\\ &+u_{2}{p}_{k_{2},\,1}^{e_{k_{2},\,1}}\cdots {p}_{k_{2},\,a}^{e_{k_{2},\,c}} {p}_{r_{2},\,1}^{n \cdot e_{r_{2},\,1}}\cdots {p}_{r_{2},\,b}^{n \cdot e_{r_{2},\,d}} {q}^{l}\\&+u_{3}{p}_{k_{3},\,1}^{e_{k_{3},\,1}}\cdots {p}_{k_{3},\,f}^{e_{k_{3},\,f}} {p}_{r_{3},\,1}^{n \cdot e_{r_{3},\,1}}\cdots {p}_{r_{3},\,b}^{n \cdot e_{r_{3},\,g}} {q}^{m}.    
\end{align*}

By \eqref{valuation property}, we see that $m\geq l$. We thus divide through the above equation by $q^{l}$. After repeating this for all prime ideals dividing all three terms, the remaining terms on the right hand side will all be coprime. We now assume that there exists an $n$ such that $a_{n}=0$, and we are in the same position as section 8.1, and the argument follows identically.

\begin{remark*}
We note that this application also follows from Gy\"ory's result \cite{gyorynewpaper}.
\end{remark*}

\section{Smooth Solutions to the $abc$ Conjecture}
In this section we will prove Theorem \ref{XYZ improvement}, as given in the introduction. We will first prove the following lemma.
\begin{lemma}\label{G<es}
Let $\brac{X,\,Y,\,Z} \in \ZZ^{3}$ be a triple with smoothness $S\brac{X,\,Y,\,Z}$ and radical $G\brac{X,\,Y,\,Z}$ defined as above. Then
\begin{equation}\label{bound on G smooth}
    G\brac{X,\,Y,\,Z} \leq e^{3 S\brac{X,\,Y,\,Z}}.
\end{equation}
\end{lemma}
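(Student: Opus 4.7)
The plan is to reduce the inequality to a classical Chebyshev-type bound on the sum of logarithms of primes. First observe that, by the definition of the radical and the smoothness, every prime $p$ occurring in the product defining $G(X,Y,Z)$ satisfies $p \le S(X,Y,Z)$, so
\[
\log G(X,Y,Z) \;=\; \sum_{p \mid XYZ} \log p \;\le\; \sum_{p \le S(X,Y,Z)} \log p \;=\; \vartheta\!\brac{S(X,Y,Z)},
\]
where $\vartheta$ is the first Chebyshev function. Hence it suffices to show $\vartheta(x) \le 3x$ for all real $x \ge 0$.

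The key step is the standard Erd\H{o}s-type estimate $\vartheta(x) < (\log 4)\, x$ for all $x > 0$, which in particular gives $\vartheta(x) < 2x < 3x$. I would prove this by the binomial coefficient argument: for any integer $n \ge 1$, every prime $p$ with $n < p \le 2n+1$ divides the middle binomial coefficient $\binom{2n+1}{n}$, and $\binom{2n+1}{n} \le \tfrac{1}{2}(1+1)^{2n+1} = 4^n$; taking logarithms yields
\[
\vartheta(2n+1) - \vartheta(n) \;\le\; n \log 4.
\]
A short induction on $\lfloor x \rfloor$ then gives $\vartheta(x) \le (\log 4)\, x$ for all real $x \ge 0$, and in particular $\vartheta(x) \le 3x$.

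Combining these two steps,
\[
\log G(X,Y,Z) \;\le\; \vartheta(S(X,Y,Z)) \;\le\; 3\, S(X,Y,Z),
\]
and exponentiating yields the claimed bound $G(X,Y,Z) \le e^{3 S(X,Y,Z)}$.

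There is no real obstacle here beyond invoking Chebyshev's estimate; the constant $3$ on the right-hand side is deliberately loose so as to avoid needing a sharper asymptotic such as the prime number theorem. If one wanted to tighten the conclusion, the same argument with the bound $\vartheta(x) \le (\log 4)\, x$ would already give $G(X,Y,Z) \le 4^{S(X,Y,Z)}$, but the weaker form $e^{3S}$ is all that is needed in the sequel.
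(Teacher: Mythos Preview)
Your proof is correct, but it proceeds differently from the paper's. Both arguments reduce the claim to the Chebyshev-type estimate $\sum_{p\le x}\log p\le 3x$, i.e.\ $\prod_{p\le x}p\le e^{3x}$, after first observing that every prime dividing $XYZ$ is at most $S(X,Y,Z)$. The difference lies in how this estimate is established. You invoke the classical Erd\H{o}s binomial-coefficient argument to get $\vartheta(x)\le(\log 4)x<3x$ directly. The paper instead works prime by prime: it uses Rosser's explicit bounds $n\log n<p_n\le 2n\log n$ (for $n\ge 3$) to bound $\prod_{i=1}^{k}p_i$ by $2^{k}k^{k}\prod_{i=3}^{k}\log i$, then takes logarithms and combines $k\log 2+k\log k+k\log\log k\le 3k\log k<3p_k$. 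Your route is shorter, more self-contained, and actually yields the sharper constant $\log 4$ in place of $3$; the paper's route trades elegance for the use of ready-made cited inequalities on $p_n$. Either approach suffices for the application, since only the loose constant $3$ is needed downstream.
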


\begin{proof}
From \cite{rosser1941explicit}, we know that for $n\geq 6$, 
\[\frac{p_{n}}{n}<\log n + \log \log n,\]
where $p_{n}$ denotes the n'th rational prime.

It follows that for $n \geq 6$, 
\[p_{n}\leq 2n \log n.\]
Indeed, computationally we can check primes $2,\,3,\,5,\,7,\,11$ and we find that for $n>2$ the above inequality holds.

Further, by Rosser's Theorem \cite{rosser1939n},
\[p_{n} > n \log n.\]

Thus, it follows that the product of the first $k$ primes satisfies the following inequality:
\begin{align}
    \prod_{i=1}^{k}p_{i} & \leq 2\cdot 3 \cdot\prod_{i=3}^{k}2i \log i \nonumber \\
    &= 2\cdot 3 \cdot 2^{k-2}\brac{4\cdot 5 \cdots k} \cdot \prod_{i=3}^{k}\log i \nonumber \\
    &= 2^{k-2}\cdot k! \cdot \prod_{i=3}^{k} \log i \nonumber \\
    & \leq 2^{k} \cdot k^{k} \cdot \prod_{i=3}^{k} \log i.
\end{align}
We now take logs of each side of the inequality attaining
\begin{align}
    \log \brac{\prod_{i=1}^{k}p_{i}} & \leq \log \brac{2^{k} \cdot k^{k} \cdot \prod_{i=3}^{k} \log i} \nonumber \\
    &= k \log 2 +k \log k + \log \brac{\prod_{i=3}^{k} \log i} \nonumber \\
    &= k \log 2 + k \log k +\sum_{i=3}^{k} \log \log i \nonumber \\
    &\leq k \log 2 + k\log k + k \log \log k \nonumber \\
    & \leq k \log 2 + k\log k +k \log k \nonumber \\
    & \leq 3k \log k \nonumber \\
    &\leq 3 p_{k}
\end{align}
where the last line follows from Rosser's Theorem.

It thus follows that for a triple of pairwise coprime integers satisfying $X+Y=Z$ with smoothness $S\brac{X,\,Y,\,Z}$, 
\begin{align}
    G\brac{X,\,Y,\,Z}&\leq \prod_{\substack{p \textrm{ prime}\\ p \leq S\brac{X,\,Y,\,Z}}}p \nonumber \\
    & \leq e^{3 S\brac{X,\,Y,\,Z}},
\end{align}
by the above inequality.
\end{proof}

We are now in a position to prove Theorem~\ref{XYZ improvement}.
\begin{proof}
By Northcott's Theorem, we can assume in the following that $H\brac{X,\,Y,\,Z}>B$ for any given bound $B$ as we will only be excluding finitely many possible solutions $\brac{X,\,Y,\,Z}$ satisfying \eqref{S assumption}.

For ease of notation, write
\begin{equation}\label{Definition of T}
    T:=\log \log H\brac{X,\,Y,\,Z}.
\end{equation}

By assumption, we have that
\begin{equation}\label{xyz ineq}
S\brac{X,\,Y,\,Z} < T \frac{\log T}{\log \log T \phi\brac{T}}.    
\end{equation}

We study triples $\brac{X,\,Y,\,Z}$ satisfying \eqref{xyz ineq}, and note that for a sufficiently large $H\brac{X,\,Y,\,Z}$,
\[T \frac{\log T}{\log \log T \phi\brac{T}} < \brac{\log H\brac{X,\,Y,\,Z}}^{\frac{1}{2}},\]
so we can apply Corollary 10. We note the choice of the exponent to be $\frac{1}{2}$ is incidental; indeed any exponent less than $\frac{2}{3}$ could have been chosen. Further, the Hilbert Class Field of $\QQ$ is itself $\QQ$ \cite{childress2008class}, so the radical $G$ in this case is defined over $\QQ$ and coincides with the radical given in \cite{lagarias2011smooth}.

By Corollary 10, we know that
\[\log H\brac{X,\,Y,Z} < G^{\mathcal{C}_{89}\frac{\log \log \log G}{\log \log G}}.\]

From the upper bound for $G$ given at \eqref{bound on G smooth} in Lemma \ref{G<es}, we obtain from the above that
\begin{align}
\log H\brac{X,\,Y,\,Z} &< e^{3\mathcal{C}_{89} S\brac{X,\,Y,\,Z}\frac{\log \log \log e^{3 S\brac{X,\,Y,\,Z}}}{\log \log e^{3 S\brac{X,\,Y,\,Z}}}} \nonumber \\
&=e^{\mathcal{C}_{131} S\brac{X,\,Y,\,Z}\frac{\log \log 3 S\brac{X,\,Y,\,Z}}{\log 3 S\brac{X,\,Y,\,Z}}}.
\end{align}

It follows from the above that 
\begin{equation}\label{XYZ ineq 1}
    \log \log H\brac{X,\,Y,\,Z} = T <\mathcal{C}_{131} S\brac{X,\,Y,\,Z}\frac{\log \log 3 S\brac{X,\,Y,\,Z}}{\log 3 S\brac{X,\,Y,\,Z}}.
\end{equation}

We recall that by Northcott's Theorem again, we can assume that $S\brac{X,\,Y,\,Z}$ can be larger than any given constant while only dropping finitely many solutions to $X+Y=Z$. Thus, only loosing finitely many solutions, for sufficiently large $S\brac{X,\,Y,\,Z}$ it follows from \eqref{XYZ ineq 1} that
\begin{equation}\label{XYZ ineq 1'}
    T <\mathcal{C}_{132} S\brac{X,\,Y,\,Z}\frac{\log \log  S\brac{X,\,Y,\,Z}}{\log  S\brac{X,\,Y,\,Z}}.
\end{equation}

Taking logarithms on both side of inequality \eqref{XYZ ineq 1'}, we can assume $S\brac{X,\,Y,\,Z}$  is large enough to give that
\begin{align}
    \log T &< \log \brac{\mathcal{C}_{132} S\brac{X,\,Y,\,Z}\frac{\log \log  S\brac{X,\,Y,\,Z}}{\log  S\brac{X,\,Y,\,Z}}} \nonumber \\
    &= \log \mathcal{C}_{132} + \log S\brac{X,\,Y,\,Z}+ \log \log \log S\brac{X,\,Y,\,Z}- \log \log S\brac{X,\,Y,\,Z} \nonumber \\
    &< 2 \log S\brac{X,\,Y,\,Z}.
\end{align}

For ease later, we divide the above by $2$ to give that
\begin{equation}\label{XYZ ineq 2}
    \frac{1}{2}\log T < \log S\brac{X,\,Y,\,Z}.
\end{equation}
For triples $\brac{X,\,Y,\,Z}$ satisfying \eqref{xyz ineq}, taking logarithms in inequality \eqref{xyz ineq} we deduce that 
\begin{align}\label{XYZ ineq 3}
\log S\brac{X,\,Y,\,Z} &< \log \brac{T \frac{\log T}{\log \log T \phi\brac{T}}} \nonumber \\
&= \log T 
+ \log \log T -\log \log \log T -\log \phi\brac{T} \nonumber \\
&< 2 \log T.
\end{align}
We note that this also implies that for sufficiently large $S\brac{X,\,Y,\,Z}$,
\[\log \log S\brac{X,\,Y,\,Z} <2 \log \log T.\]

Substituting this and \eqref{XYZ ineq 2} into \eqref{xyz ineq} we obtain that
\begin{align}\label{XYZ ineq 4}
    S\brac{X,\,Y,\,Z} &< T \frac{\log T}{\log \log T \phi\brac{T}} \nonumber \\
    &< T \frac{2 \log S\brac{X,\,Y,\,Z}}{\frac{1}{2} \log \log \brac{S\brac{X,\,Y,\,Z}} \phi\brac{T}} \nonumber \\
    &=4T \frac{\log S\brac{X,\,Y,\,Z}}{\log \log \brac{S\brac{X,\,Y,\,Z}}\phi\brac{T}}.
\end{align}

Rearranging the above we obtain that
\begin{equation}\label{XYZ ineq 5}
\frac{1}{4}\frac{S\brac{X,\,Y,\,Z} \log \log S\brac{X,\,Y,\,Z}}{\log S\brac{X,\,Y,\,Z}} \phi \brac{T}< T
\end{equation}

We now have two inequalities relating $S$ and $T$, namely \eqref{XYZ ineq 1'} and \eqref{XYZ ineq 5} given above. We compare these directly to find that
\begin{align}
    \frac{1}{4}\frac{S\brac{X,\,Y,\,Z}\log \log S\brac{X,\,Y,\,Z}}{\log S\brac{X,\,Y,\,Z}} \phi \brac{T}< T<\mathcal{C}_{132} \frac{S\brac{X,\,Y,\,Z}\log \log  S\brac{X,\,Y,\,Z}}{\log  S\brac{X,\,Y,\,Z}},
\end{align}
which we rewrite as
\begin{align}
    \frac{S\brac{X,\,Y,\,Z} \log \log S\brac{X,\,Y,\,Z}}{\log S\brac{X,\,Y,\,Z}} \phi \brac{T}< T<\mathcal{C}_{133} \frac{S\brac{X,\,Y,\,Z}\log \log  S\brac{X,\,Y,\,Z}}{\log  S\brac{X,\,Y,\,Z}}.
\end{align}
Cancelling terms on both sides gives us that
\begin{equation}\label{final XYZ ineq}
    \phi\brac{T}< \mathcal{C}_{133}.
\end{equation}
However, $\phi\brac{T}$ tends to $+\infty$ as $T$ tends to $0$, and as $T=\log \log H\brac{X,\,Y,\,Z}$, this happens as $H\brac{X,\,Y,\,Z}$ gets arbitrarily large. Thus, there is a value $B$ such that if $H\brac{X,\,Y,\,Z}>B$, then \eqref{final XYZ ineq} cannot hold. This gives an upper bound for values of $H\brac{X,\,Y,\,Z}$ such that the triple satisfies the assumptions of the theorem. It thus follows by Northcutt's Theorem that there are only finitely many primitive triples $\brac{X,\,Y,\,Z}$ satisfying $\brac{X+Y=Z}$ with
\[S\brac{X,\,Y,\,Z}\leq \log \log H\brac{X,\,Y,\,Z}\frac{\log \log \log H\brac{X,\,Y,\,Z}}{\log \log \log \log H\brac{X,\,Y,\,Z} \phi\brac{\log \log H\brac{X,\,Y,\,Z}}}\]

\end{proof}
We note that we could also directly prove that there are only finitely many primitive integer triples $\brac{X,\,Y,\,Z}$ satisfying $X+Y=Z$ with 
\[S\brac{X,\,Y,\,Z} < c \log \log H\brac{X,\,Y,\,Z}\]
for any constant $c \in \RR,\, c>0$ using the same method of proof as above, though this result follows from Theorem~\ref{XYZ improvement} as stated previously.

\end{document}